\newcommand{\figref}[1]{{Figure~\ref{#1}}}
\newcommand {\be}{\begin{equation}}
\newcommand {\bes}{\begin{displaymath}}
\newcommand {\es}{\end{displaymath}}
\newcommand {\e}{\end{equation}}
\newcommand {\EE}{\mathbb{E}}
\newcommand {\bea}{\begin{eqnarray}}
\newcommand {\ea}{\end{eqnarray}}
\newtheorem{proposition}{Proposition}[section]
\newtheorem{theorem}{Theorem}[section]
\newtheorem{Assumption}{Assumption}[section]
\newtheorem{lemma}{Lemma}[section]
\newtheorem{remark}{Remark}[section]
\newtheorem{corollary}{Corollary}[section]
\newenvironment{proof}[1][Proof]{\textbf{#1.} }{\hspace{\stretch{1}}\rule{0.5em}{0.5em}}
\newcommand{\HH}{\mathbb{H}}
\newcommand{\Dt}{\Delta t}
\newcommand{\thmref}[1]{{Theorem~\ref{#1}}}
\newcommand{\lemref}[1]{{Lemma~\ref{#1}}}
\newcommand{\secref}[1]{{Section~\ref{#1}}}
\newcommand{\assref}[1]{{Assumption~\ref{#1}}}
\newcommand{\propref}[1]{{Proposition~\ref{#1}}}
\newcommand{\coref}[1]{{Corollary~\ref{#1}}}
\newcommand{\rmref}[1]{{Remark~\ref{#1}}}
\journal{Applied Numerical Mathematics}
\begin{document}
\begin{frontmatter}
\title{Weak convergence for a stochastic exponential integrator and finite element discretization of SPDE for multiplicative  \& additive noise}
\author[at,atb]{Antoine Tambue \corref{cor1}}
\cortext[cor1]{Corresponding author}
\ead{antonio@aims.ac.za}
\address[at]{The African Institute for Mathematical Sciences(AIMS) and Stellenbosh University,
6-8 Melrose Road, Muizenberg 7945, South Africa}
\address[atb]{Center for Research in Computational and Applied Mechanics (CERECAM), and Department of Mathematics and Applied Mathematics, University of Cape Town, 7701 Rondebosch, South Africa.}
\author[jmt]{Jean Medard T Ngnotchouye}
\ead{Ngnotchouye@ukzn.ac.za}
\address[jmt]{School of Mathematics, Statistics and Computer Science,\\
University of KwaZulu-Natal, Private Bag X01, Scottsville 3209, Pietermaritzburg,
South Africa}

\begin{abstract}
We consider  a finite element approximation of a general  semi-linear stochastic partial differential equation
 (SPDE) driven by space-time multiplicative and additive noise.  
We examine the  full weak convergence  rate of the exponential Euler scheme when  the  linear operator is self adjoint  and provide preliminaries results toward  
the full weak convergence rate for non-self-adjoint linear operator.  Key part of the proof does not rely on Malliavin calculus. 
Depending of  the regularity of  the noise and  the initial solution, we found that  in some cases the rate of weak convergence is  twice the rate of the strong  convergence.
Our convergence rate  is in agreement with some numerical results in two dimensions.

\end{abstract}
\begin{keyword}
SPDE \sep Finite element methods \sep
Exponential integrators\sep Weak convergence  \sep Strong convergence
\end{keyword}
\end{frontmatter}

\section{Introduction}
The weak numerical approximation of an It\^{o}
stochastic partial differential equation  defined in $\Omega\subset
\mathbb{R}^{d}$ is analyzed. Boundary conditions on the domain $\Omega$
are typically Neumann, Dirichlet or Robin conditions.
More precisely, we consider in  the abstract setting the following stochastic partial differential equation
\begin{eqnarray}
  \label{adr}
  dX=(AX +F(X))dt + B(X)d W, \qquad  X(0)=X_{0},\qquad t \in [0, T],\qquad T>0
\end{eqnarray}
on the Hilbert space $L^{2}(\Omega)$. Here the linear operator 
$A$ which  is not necessarily selfadjoint, is the generator of an analytic semigroup $S(t):=e^{t A}, t\geq 0.$
 The functions $F$ and $B$  are nonlinear functions of
$X$ and the noise term $W(t)$ is a $Q$-Wiener process  defined on a filtered
 probability space $(\mathbb{D},\mathcal{F},\mathbb{P},\left\lbrace F_{t}\right\rbrace_{t\geq 0})$,
that is white in time. The filtration is assumed to fulfill the usual conditions 
(see e.g. \cite[Definition 2.1.11]{PrvtRcknr}). For technical reasons more interest will be on a deterministic initial value $X_0\in H$.
The noise can be represented as a series in the eigenfunctions of the covariance
operator $Q$  given by
\begin{eqnarray}
  \label{eq:W}
  W(x,t)=\underset{i \in
    \mathbb{N}^{d}}{\sum}\sqrt{q_{i}}e_{i}(x)\beta_{i}(t),
\end{eqnarray}
where $(q_i,e_{i})$, $i\in \mathbb{N}^{d}$ are the eigenvalues and  eigenfunctions
of the covariance operator $Q$ and $\beta_{i}$ are independent and identically distributed
standard Brownian motions.
Under some technical assumptions
it is well known (see  \cite{DaPZ,PrvtRcknr,Chw})
that the unique mild
solution of \eqref{adr} is given by
\begin{eqnarray}
\label{eq1}
X(t)=S(t)X_{0}+\displaystyle\int_{0}^{t}S(t-s)F(X(s))ds +\displaystyle\int_{0}^{t}S(t-s)B(X(s))dW(s).
\end{eqnarray}
Equations of type \eqref{adr} arise  in physics, biology and engineering \cite{shardlow05, AtThesis,SebaGatam} and in few cases, exact solutions exist.
The study of numerical solutions of SPDEs is therefore an active
research area and there is an extensive literature on numerical methods for SPDE \eqref{adr} \cite{Jentzen2,Jentzen3,Jentzen4,AtThesis,GTambueexpo,allen98:_finit, Stig1, Yn:04}.
Basicaly there are two types of convergence. The  strong convergence  or pathwise convergence studies  the pathwise convergence of the numerical solution to
true solution  while the  weak  convergence aims to  approximate  the law of the solution at
a fixed time. In many applications, weak error is more relevant as interest are usualy based  on some functions of the solution i.e, $\mathbb{E} \Phi (X)$, where 
$ \Phi : H \rightarrow \mathbb{R}$ and $\mathbb{E}$ is the expectation.
Strong convergence rates for  numerical approximations of stochastic evolution equations  of type \eqref{adr} with smooth
and regular nonlinearities are well understood in the scientific literature (see \cite{Jentzen2,Jentzen3,Jentzen4,allen98:_finit, Stig1, Yn:04,AtThesis,GTambueexpoM} and references therein).
Weak convergence rates for numerical approximations of equation  \eqref{adr} are far away from being well understood.  For a linear SPDE with additive noise,  the solution can be written
 explicitly  and the weak error have  been  estimated in \cite{shardlow:2003,stigweak} with implicit Euler method for time discretization. 
 The  space discretization have been performed  with finite difference method \cite{shardlow:2003, stigweak} and finite element  method  \cite{stigweak}. 
 The weak error  of the implicit Euler method  is more  complicated for  nonlinear equation of  type \eqref{adr}  as the Malliavin calculus is usually used to handle
the irregular term and the term involving the nonlinear operators  $F$ and $B$ (see \cite{Debussche,WangGan2013,stignonlinearweak}). 
In almost  all the literature for weak error estimation, the linear operator $A$ is assumed to be self adjoint. 
Furthermore no  numerical simulations were made to sustain  the theoretical results to the best of our knowledge. 
In  this paper we consider a stochastic exponential scheme (called stochastic exponential Euler scheme) as in \cite{GTambueexpoM} and provide the  weak error of the full discrete scheme (\thmref{fullweak}, \thmref{opfullaplace} and \rmref{oppta}) where 
the space discretization is performed using finite element, following closely  the works  in \cite{Wang2014,WangGan2013}  on another exponential integrator scheme.
Our  weak convergence proof does not use Malliavin calculus. Furthermore  we provide some preliminaries results (weak convergence of  the   semi discrete scheme in \thmref{thm:mainadditive} and \thmref{thm:multiplicative})  toward the weak  convergence when the linear operator $A$ 
is not necessarily self-adjoint, and provide some numerical examples to sustain the theoretical results. 
Recent work in \cite{stignonlinearweak} is  used to obtain optimal convergence order for additive noise  when the linear operator is self adjoint in \thmref{opfullaplace} and \rmref{oppta}. 
We also extend in \thmref{strong} the strong optimal  convergence rate  provided in \cite[Theorem 1.1]{kruse} to non-self-adjoint operator $A$. 
 Note that as  the operator $A$ is not necessarily sef-adjoint,  our scheme   here are based on exponential matrix computation. 
 The deterministic part of  this scheme have be proved  to be efficient and robust  comparing to standard schemes in many applications \cite{SebaGatam,AtThesis,TLG,antoelisa} 
 where  the exponential matrix functions have been computed using  the Krylov subspace technique \cite{kry} and fast Leja  points  technique \cite{LE1}.
 For convenience of presentation, we take $A$ to be a second order operator as this simplifies the convergence proof. Our results can be extended to high order semi linear parabolic SPDE.

The  paper is organized as follows. \secref{sec1} provides abstract setting and the well posedness of \eqref{adr}. 
The stochastic exponential Euler scheme along with  weak error representation are provided in \secref{sec2}. The temporal weak convergence rate of the stochastic exponential Euler scheme is provided in  \secref{sec3} for additive noise and in \secref{sec4} for multiplicative noise. 
Note that in this section the linear  operator $A$ is not  necessarily self-adjoint.
\secref{sec5} provides strong optimal convergence rate of the semi discrete solution  for non-self-adjoint operator $A$  along  with full weak convergence rate of  the stochastic exponential  Euler scheme  for self-adjoint operator $A$.
Numerical results to sustain some theoretical results are provided in \secref{sec6}.

\section{The abstract setting and mild solution}
\label{sec1}
Let us start by presenting briefly the notation for the main function
spaces and norms that we use in the paper. 
Let $H $ be a separable Hilbert space  with the  norm $\Vert \cdot \Vert$ associated to
the inner product $\langle,\rangle_H$.
For a Banach space $U$ we denote by $\Vert\cdot\Vert_{U}$ 
the norm of $U$, 
$L(U, H)$  the set of bounded linear mapping  from
$U$ to $H$
and  by $L_{2}(\mathbb{D},U)$\footnote{$\mathbb{D}$ is the sample space} the Hilbert space of all equivalence 
classes of square integrable $U-$valued random variables.  For ease of notation $L(U,U)=L(U)$. 
Furthermore  we denote  by $\mathcal{L}_{1}(U,H)$ the set of  nuclear operators from $U$ to $H$,  $\mathcal{L}_{2}(U,H):=HS(U,H)$ the space of Hilbert Schmidt functions from $U$ to $H$
and $\mathcal{C}_{b}^{k}(U,H)$ the space of not neccessarily bounded mappings from $U$ to $H$ that have continuous and bounded  Frechet  derivatives up to order $k,\, \,k \in \mathbb{N}$.
For simplicity we also write  $\mathcal{L}_{1}(U,U)=\mathcal{L}_{1}(U)$ and $\mathcal{L}_{2}(U,U)=\mathcal{L}_{2}(U)$. 

For a given orthonormal basis $ (e_{i})$ of U, the trace of $ l\in \mathcal{L}_{1}(U)$ is defined by
\begin{eqnarray}
\label{trace}
 \text{Tr}(l):=\underset{i \in \mathbb{N}^{d}}{\sum}\langle l e_i, e_i\rangle_{U},
\end{eqnarray}
while the norm of $ l\in \mathcal{L}_{2}(U)$ is defined by
\begin{eqnarray}
\label{HS}
 \Vert l\Vert_{\mathcal{L}_{2}(U)}^{2}:=\underset{i\in \mathbb{N}^{d}}{\sum}\Vert l e_{i}
 \Vert_{U} ^2 < \infty,
\end{eqnarray}
Note that the trace in\eqref{trace} and the Hilbert Schmidt norm  in \eqref{HS} are independent of the basis $ (e_{i})$.

Let $Q: H\rightarrow H$ be an operator, we  consider throughout this work the $Q$-Wiener process. 
We denote the space of Hilbert--Schmidt operators from 
$Q^{1/2}(H)$ to $H$   by $L_{2}^{0}:=\mathcal{L}_{2}(Q^{1/2}(H),H)= HS(Q^{1/2}(H),H)$ 
and the corresponding norm $\Vert . \Vert_{L_{2}^{0}}$ by
\begin{eqnarray*}
 \Vert l\,\Vert_{L_{2}^{0}} := \Vert l
 Q^{1/2}\Vert_{\mathcal{L}_{2}(H)}=\left( \underset{i \in \mathbb{N}^{d}}{\sum}\Vert
   l Q^{1/2} e_{i} \Vert^{2}\right)^{1/2},\qquad \ l\in L_{2}^{0}.  
\end{eqnarray*}
 Let  $\varphi : [0,T] \times \Omega \rightarrow L_{2}^{0} $  be a  $L_{2}^{0}-$valued 
 predictable stochastic process with 
 $\mathbb{P} \left[ \int_{0}^{T}\Vert \varphi \Vert_{L_{2}^{0}}^{2}ds < \infty \right]=1$.
 Then Ito's isometry (see e.g. \cite[Step 2 in Section 2.3.2]{DaPZ}) gives
 \begin{eqnarray*}
  \mathbb{E} \Vert \int_{0}^{t}\varphi dW \Vert^{2}=\int_{0}^{t}
  \mathbb{E} \Vert \varphi \Vert_{L_{2}^{0}}^{2}ds=\int_{0}^{t}
  \mathbb{E} \Vert \varphi Q^{1/2} \Vert^{2}_{\mathcal{L}_{2}(H)}ds,\qquad \qquad t\in [0,T]. 
 \end{eqnarray*}
Let us recall the following proprieties which will be used in our errors estimation.

\begin{proposition} \cite{Chw}
\label{proposition}
Let $l, l_1, l_2$ be  three operators in Banach spaces, the following proprieties hold 
 \begin{itemize}
  \item If   $l \in  \mathcal{L}_{1}(U)$ then
  \begin{eqnarray}
   \vert \text{Tr}(l) \vert \leq \Vert l \Vert_{\mathcal{L}_{1}(U)}.
  \end{eqnarray}
 \item If $l_1 \in L(H)$ and $ l_2 \in  \mathcal{L}_{1}(H)$, then  both   $ l_1 l_2$ and $ l_2 l_1$ belong to $\mathcal{L}_{1}(H)$ with
 \begin{eqnarray}
 \label{eql}
    \text{Tr}(l_1 l_2)= \text{Tr}(l_2 l_1).
 \end{eqnarray}
 \item If $l_1 \in \mathcal{L}_{2}(U,H)$ and $l_2 \in \mathcal{L}_{2}(H,U)$, then  $ l_1 l_2 \in \mathcal{L}_{1}(H)$ with
 \begin{eqnarray}
  \Vert l_1 l_2 \Vert_{\mathcal{L}_{1}(H)} \leq \Vert l_1 \Vert_{\mathcal{L}_{2}(U,H)} \Vert l_2\Vert_{\mathcal{L}_{2}(H,U)}.
 \end{eqnarray}
 \item If  $l \in \mathcal{L}_{2}(U,H)$, then  its adjoint $l^{*} \in \mathcal{L}_{2}(H,U)$  with 
  \begin{eqnarray}
   \Vert l^{*}\Vert_{\mathcal{L}_{2}(H,U)} = \Vert l \Vert_{\mathcal{L}_{2}(U,H)}.
  \end{eqnarray}
 \item  If $l \in L(U, H)$ and $l_j \in \mathcal{L}_{j}(U),\, j=1,2, $ then $ll_j \in  \mathcal{L}_{j}(U,H)$  with
 \begin{eqnarray}
 \label{eqnn}
  \Vert l l_j \Vert_{\mathcal{L}_{j}(U,H)} \leq \Vert l \Vert_{L (U,H)} \Vert l_j\Vert_{\mathcal{L}_{j}(U)},\;\;\;\,\;\; j=1,2.
 \end{eqnarray}
\end{itemize}
  \end{proposition}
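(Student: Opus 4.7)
The plan is to derive all five items from the singular value decomposition (SVD) of compact operators between separable Hilbert spaces, the standard toolkit for trace-class and Hilbert-Schmidt operators. Recall that every compact $T:U\to H$ admits $T=\sum_k\sigma_k\langle\cdot,u_k\rangle_U v_k$ with orthonormal sequences $\{u_k\}\subset U$, $\{v_k\}\subset H$ and singular values $\sigma_k\ge 0$, for which $\|T\|_{\mathcal{L}_1(U,H)}=\sum_k\sigma_k$ while $\|T\|_{\mathcal{L}_2(U,H)}^2=\sum_k\sigma_k^2$. The basis independence of the trace and of the Hilbert-Schmidt norm, already stated in the excerpt, will be invoked freely, as will separability to extend any orthonormal sequence to an orthonormal basis.

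I would first dispatch (4), (5), and (1). For (4), pick orthonormal bases $\{e_i\}\subset U$ and $\{f_j\}\subset H$ and apply Parseval in each factor together with Tonelli:
\[
\|l\|_{\mathcal{L}_2(U,H)}^2=\sum_i\|l e_i\|_H^2=\sum_{i,j}|\langle l e_i,f_j\rangle_H|^2=\sum_j\|l^* f_j\|_U^2=\|l^*\|_{\mathcal{L}_2(H,U)}^2.
\]
For (5) with $j=2$, $\|l l_2\|_{\mathcal{L}_2(U,H)}^2=\sum_i\|l l_2 e_i\|_H^2\le\|l\|_{L(U,H)}^2\|l_2\|_{\mathcal{L}_2(U)}^2$; for $j=1$, feed the SVD of $l_1=\sum_k\sigma_k\langle\cdot,u_k\rangle_U v_k$ through $l$ to obtain $ll_1=\sum_k\sigma_k\langle\cdot,u_k\rangle_U(l v_k)$, whose nuclear bound is $\sum_k\sigma_k\|l v_k\|_H\le\|l\|_{L(U,H)}\sum_k\sigma_k$. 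Item (1) then follows by evaluating the trace of $l=\sum_k\sigma_k\langle\cdot,u_k\rangle_U v_k$ on an ONB of $U$ extending $\{u_k\}$: only the diagonal entries with $e_i=u_k$ contribute, yielding $|\text{Tr}(l)|\le\sum_k\sigma_k|\langle v_k,u_k\rangle_U|\le\sum_k\sigma_k=\|l\|_{\mathcal{L}_1(U)}$ by Cauchy-Schwarz.

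For (3), I exploit the SVD $l_2=\sum_k s_k\langle\cdot,p_k\rangle_H q_k$ with $\{p_k\}\subset H$, $\{q_k\}\subset U$ orthonormal and $s_k=\sigma_k(l_2)\in\ell^2$. Then $l_1 l_2=\sum_k s_k\langle\cdot,p_k\rangle_H(l_1 q_k)\in\mathcal{L}_1(H)$, and Cauchy-Schwarz together with Bessel applied to $\{q_k\}$ (completed to an ONB of $U$) yields
\[
\|l_1 l_2\|_{\mathcal{L}_1(H)}\le\sum_k s_k\|l_1 q_k\|_H\le\Big(\sum_k s_k^2\Big)^{1/2}\Big(\sum_k\|l_1 q_k\|_H^2\Big)^{1/2}\le\|l_2\|_{\mathcal{L}_2(H,U)}\|l_1\|_{\mathcal{L}_2(U,H)}.
\]
For (2), start again from the SVD of $l_2$, now with $(s_k)\in\ell^1$ since $l_2\in\mathcal{L}_1(H)$. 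Nuclearity of $l_1 l_2$ is (5); nuclearity of $l_2 l_1=\sum_k s_k\langle\cdot,l_1^* p_k\rangle_H q_k$ is read off this explicit representation, with nuclear norm bounded by $\|l_1\|_{L(H)}\sum_k s_k$. The cyclicity then collapses to
\[
\text{Tr}(l_1 l_2)=\sum_k s_k\langle l_1 q_k,p_k\rangle_H=\sum_k s_k\langle q_k,l_1^* p_k\rangle_H=\text{Tr}(l_2 l_1),
\]
all summation interchanges justified by the majorant $\sum_k s_k|\langle l_1 q_k,p_k\rangle_H|\le\|l_1\|_{L(H)}\sum_k s_k<\infty$.

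The only substantive ingredient throughout is the SVD itself, available because trace-class and Hilbert-Schmidt operators on separable Hilbert spaces are compact. The main obstacle is therefore not estimation but bookkeeping: the careful justification of the Fubini and summation swaps needed at every step, each of which succeeds only because at least one factor is trace-class or Hilbert-Schmidt, producing the required $\ell^1$ or $\ell^2$ majorant. Once this is organised, each item reduces to a one-line SVD computation.
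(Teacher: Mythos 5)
The paper does not prove this proposition at all: it is imported verbatim from the reference \cite{Chw} and used as a toolbox, so there is no in-paper argument to compare yours against. Your SVD-based proof is, on its own terms, correct and complete for the setting the paper actually works in. Each item does reduce to the singular value decomposition as you claim: (4) is Parseval--Tonelli, (5) with $j=2$ is the pointwise bound $\Vert l l_2 e_i\Vert \leq \Vert l\Vert_{L(U,H)}\Vert l_2 e_i\Vert$, (5) with $j=1$ and (3) follow from pushing the SVD of the nuclear (resp.\ Hilbert--Schmidt) factor through the other operator and estimating the resulting rank-one expansion, (1) is the diagonal evaluation $\text{Tr}(l)=\sum_k\sigma_k\langle v_k,u_k\rangle$ with $\vert\langle v_k,u_k\rangle\vert\leq 1$, and (2) is the identity $\langle l_1q_k,p_k\rangle_H=\langle q_k,l_1^*p_k\rangle_H$ summed against the $\ell^1$ weight $(s_k)$. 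The summation interchanges you flag are indeed the only places where care is needed, and the majorants you exhibit justify them.

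Two caveats worth recording. First, the proposition is stated for ``operators in Banach spaces,'' and your argument is genuinely Hilbertian: the SVD, the adjoint identity in (4), and even the well-definedness of the trace in (1) all rely on $U$ and $H$ being (separable) Hilbert spaces. For general Banach spaces the nuclear class is defined differently and the trace need not be well defined without the approximation property, so your proof does not cover the statement as literally worded. In the paper, however, every invocation of $\mathcal{L}_1$, $\mathcal{L}_2$ and $\text{Tr}$ is on Hilbert spaces ($H=L^2(\Omega)$, $U=Q^{1/2}(H)$ or $H$ itself), so the restriction is harmless in context. Second, you silently identify $\Vert T\Vert_{\mathcal{L}_1}$ with $\sum_k\sigma_k$ and use that the trace of $\sum_k\langle\cdot,a_k\rangle b_k$ equals $\sum_k\langle b_k,a_k\rangle$ independently of the representation; both are standard but are exactly the kind of fact \cite{Chw} is being cited for, so strictly speaking you are re-deriving the cited material from slightly deeper cited material. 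Neither point is a gap in the mathematics.
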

For classical well posedness, some assumptions are required both for the existence and
uniqueness of the solution of equation (\ref{adr}).
\begin{Assumption}
\label{assumptionn}
The operator $A : \mathcal{D}(A) \subset H \rightarrow H$ is a negative  generator of an analytic semigroup $S(t)=e^{t A}, \quad t\geq 0$.
\end{Assumption}
In the Banach space  $\mathcal{D}((-A)^{\alpha/2})$, $\alpha \in
\mathbb{R}$, we use the notation 
$ \Vert (-A)^{\alpha/2}. \Vert =:\Vert .\Vert_{\alpha} $.
We recall some basic properties of the semigroup $S(t)$ generated by $A$.
\begin{proposition}
 \textbf{[Smoothing properties of the semigroup \cite{Henry}]}\\
\label{prop1}
Let $ \alpha >0,\;\beta \geq 0 $ and $0 \leq \gamma \leq 1$, then  there exist  $C>0$ such that
\begin{eqnarray*}
 \Vert (-A)^{\beta}S(t)\Vert_{L(H)} &\leq& C t^{-\beta}\;\;\;\;\; \text {for }\;\;\; t>0\\
  \Vert (-A)^{-\gamma}( \text{I}-S(t))\Vert_{L(H)} &\leq& C t^{\gamma} \;\;\;\;\; \text {for }\;\;\; t\geq 0.
\end{eqnarray*}
In addition,
\begin{eqnarray*}
(-A)^{\beta}S(t)&=& S(t)(-A)^{\beta}\quad \text{on}\quad \mathcal{D}((-A)^{\beta} )\\
\text{If}\;\;\; \beta &\geq& \gamma \quad \text{then}\quad
\mathcal{D}((-A)^{\beta} )\subset \mathcal{D}((-A)^{\gamma} ),\\
\Vert D_{t}^{l}S(t)v\Vert_{\beta}&\leq& C t^{-l-(\beta-\alpha)/2} \,\Vert v\Vert_{\alpha},\;\; t>0,\;v\in  \mathcal{D}((-A)^{\alpha/2})\;\; l=0,1, 
\end{eqnarray*}
where $ D_{t}^{l}:=\dfrac{d^{l}}{d t^{l}}$.
\end{proposition}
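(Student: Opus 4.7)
The plan is to treat each of the five assertions of \propref{prop1} as a consequence of the analytic semigroup structure assumed in \assref{assumptionn}. Since $-A$ generates an analytic semigroup $S(t)=e^{tA}$, the operator $S(t)$ maps $H$ into $\mathcal{D}(A^n)$ for every integer $n\ge 1$ and $t>0$, and one has the classical bound $\|A^n S(t)\|_{L(H)}\le C_n t^{-n}$. Fractional powers $(-A)^{\beta}$ are defined via the Dunford calculus (or Balakrishnan's formula, e.g.\ $(-A)^{-\beta}=\frac{1}{\Gamma(\beta)}\int_0^\infty s^{\beta-1}S(s)ds$ for $\beta>0$), and all five estimates reduce to manipulating this representation together with the analytic semigroup bound.

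First I would establish the smoothing estimate $\|(-A)^{\beta}S(t)\|_{L(H)}\le C t^{-\beta}$. For integer $\beta$ this is the standard analytic semigroup estimate; for non-integer $\beta\in(n-1,n)$ one uses the representation
\begin{equation*}
(-A)^{\beta}S(t)=\frac{1}{\Gamma(n-\beta)}\int_{0}^{\infty}s^{n-\beta-1}(-A)^{n}S(t+s)\,ds,
\end{equation*}
bound the integrand by $C(t+s)^{-n}$, and change variables $s=tu$ to extract the factor $t^{-\beta}$. For the second inequality, the identity $I-S(t)=-\int_{0}^{t}AS(s)\,ds$ gives
\begin{equation*}
(-A)^{-\gamma}(I-S(t))=\int_{0}^{t}(-A)^{1-\gamma}S(s)\,ds,
\end{equation*}
whose norm is dominated by $\int_{0}^{t} C s^{\gamma-1}\,ds=C\gamma^{-1}t^{\gamma}$ thanks to the first estimate (the case $\gamma=0$ being the trivial uniform bound on $I-S(t)$, and $\gamma=1$ being $(-A)^{-1}$ applied to a bounded operator).

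The commutation $(-A)^{\beta}S(t)=S(t)(-A)^{\beta}$ on $\mathcal{D}((-A)^{\beta})$ follows because $S(t)$ commutes with the resolvent $(\lambda-A)^{-1}$ for $\lambda$ in the resolvent set, and therefore with every operator built from a Dunford integral in $\lambda$; this is precisely the definition of $(-A)^{\beta}$. The inclusion $\mathcal{D}((-A)^{\beta})\subset \mathcal{D}((-A)^{\gamma})$ for $\beta\ge\gamma\ge 0$ is immediate from $(-A)^{\gamma}=(-A)^{\gamma-\beta}(-A)^{\beta}$, since $(-A)^{\gamma-\beta}$ is bounded (it is given by $(-A)^{-(\beta-\gamma)}$, a bounded operator by the Balakrishnan formula).

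Finally, for the time-derivative estimate, I would use $D_{t}^{l}S(t)=(-A)^{l}S(t)$, write
\begin{equation*}
\|D_{t}^{l}S(t)v\|_{\beta}=\|(-A)^{\beta/2+l}S(t)v\|=\|(-A)^{\beta/2+l-\alpha/2}S(t)(-A)^{\alpha/2}v\|,
\end{equation*}
using the commutation on $\mathcal{D}((-A)^{\alpha/2})$, and then apply the first estimate with exponent $\sigma:=\beta/2+l-\alpha/2$ when $\sigma\ge 0$ and the uniform boundedness of $(-A)^{\sigma}$ when $\sigma<0$ (absorbing into the constant on bounded time intervals). The principal obstacle is really just the first estimate for non-integer $\beta$: once one has a careful justification of the Dunford/Balakrishnan representation and is able to interchange the integral with $(-A)^{n}S(t+s)$, the remaining items are essentially algebraic consequences. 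All of this is contained in the standard reference \cite{Henry}, so I would simply outline the derivation above and defer the technical details to that monograph.
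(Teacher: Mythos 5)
Your outline is correct, and it is essentially the argument of the standard reference: the paper gives no proof of \propref{prop1} at all, deferring entirely to \cite{Henry}, so there is no internal argument to compare against. Your reductions --- the Balakrishnan representation plus the integer-order analytic-semigroup bound $\|(-A)^nS(t)\|_{L(H)}\le C_nt^{-n}$ for the first estimate, the identity $I-S(t)=-\int_0^t AS(s)\,ds$ for the second, commutation of $S(t)$ with the resolvent (hence with Dunford integrals) for the third, boundedness of $(-A)^{-(\beta-\gamma)}$ for the domain inclusion, and $D_t^lS(t)=A^lS(t)$ combined with the first estimate for the last --- are exactly how these facts are proved in \cite{Henry} and in Pazy, and the change of variables $s=tu$ does extract the factor $t^{-\beta}$ with a convergent integral since $n-\beta-1>-1$ and $-\beta-1<-1$. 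One caveat: your fallback for the final estimate in the case $\sigma:=l+(\beta-\alpha)/2<0$ does not work as written. Uniform boundedness of $(-A)^{\sigma}$ gives only $\|D_t^lS(t)v\|_{\beta}\le C\|v\|_{\alpha}$, which is \emph{not} dominated by $Ct^{-\sigma}\|v\|_{\alpha}$ near $t=0$ when $-\sigma>0$; indeed for $l=0$ and $\beta<\alpha$ the stated bound would force $\|S(t)v\|_{\beta}\to0$ as $t\to0^{+}$, contradicting $S(t)v\to v$. The estimate is only meaningful (and is only ever invoked in the paper) under the implicit restriction $\alpha\le\beta+2l$, i.e. $\sigma\ge0$, and you should record that restriction rather than argue the negative case.
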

We describe now in detail the standard assumptions usually used
on the nonlinear terms $F$,$B$ and the noise $W$.
\begin{Assumption}
\label{assumption1}
\textbf{[Assumption on the drift term $F$]}
There exists  a positive constant $L> 0$  such  that
$F: H \rightarrow H$  satisfies the following Lipschitz condition
\begin{eqnarray*}
 \Vert F(Z)- F(Y)\Vert \leq L \Vert Z- Y\Vert \qquad \forall \quad Z, \;  Y  \in H. 
\end{eqnarray*}
\end{Assumption}
As a consequence, there  exists a constant $C>0$  such that
\begin{eqnarray*}
 \Vert F(Z) \Vert &\leq&   \Vert F (0)\Vert + \Vert F (Z) - F (0)\Vert
 \leq \Vert F (0)\Vert + L \Vert Z\Vert \leq C( 1
 +\Vert Z \Vert )\qquad \qquad Z\in H.\\
\end{eqnarray*}
\begin{Assumption}
 \label{assumption2}
\textbf{[Assumption on  the diffusion term $B$]}\\
There exists  a positive constant $L> 0$ such  that the mapping
 $B: H\rightarrow \mathcal{L}_2(H) $ satisfies the following condition
\begin{eqnarray*}
 \Vert B(Z)- B(Y)\Vert_{\mathcal{L}_2(H)} \leq L \Vert Z- Y\Vert \qquad \forall Z,
 Y  \in H.
\end{eqnarray*}
\end{Assumption}
As a consequence, there exists a constant  $C>0$  such that
\begin{eqnarray}
 \Vert B(Z) \Vert_{\mathcal{L}_2(H)} &\leq&   \Vert B(0)\Vert_{\mathcal{L}_2(H)} + \Vert B(Z) - B(0)\Vert_{\mathcal{L}_2(H)} \nonumber \\
 &\leq &\Vert B(0)\Vert_{\mathcal{L}_2(H)} + L \Vert Z\Vert \leq C( 1
 +\Vert Z \Vert )\qquad  Z\in H.\label{noise}
\end{eqnarray}

\begin{theorem}
 \label{existth}
 \textbf{[Existence and uniqueness (\cite{DaPZ})]}\\
Assume that the initial solution $X_{0}$ is an $F_{0}-$measurable $H-$valued random variable and \assref{assumptionn}, \assref{assumption1}, \assref{assumption2} are satisfied.  
There exists a mild solution $X$ to \eqref{adr} unique, up to
equivalence among the processes, satisfying 
\begin{eqnarray}
\mathbb{P} \left( \int_{0}^{T}\Vert X(s)\Vert^{2}ds < \infty\right)=1.
\end{eqnarray}
 For any $p\geq 2$ there exists a constant $C =C(p,T)>0 $ such that 
\begin{eqnarray}
\label{ineq2}
 \underset{t\in [0,T]}{\sup}\mathbb{E} \Vert X(t)\Vert^{p} \leq C\left(1+\mathbb{E} \Vert X_{0} \Vert^{p}\right). 
\end{eqnarray}
 For any $p>2$ there exists a constant $C_{1} =C_{1}(p,T)>0 $ such that 
\begin{eqnarray}
 \mathbb{E}\underset{t\in [0,T]}{\sup} \Vert X(t)\Vert^{p} \leq C_{1}\left(1+\mathbb{E} \Vert X_{0} \Vert^{p}\right). 
\end{eqnarray}
\end{theorem}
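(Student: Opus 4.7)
The plan is to establish existence and uniqueness via a Banach fixed point argument on a space of predictable $H$-valued processes, and then upgrade to the moment bounds by Gronwall-type estimates. Concretely, define the Picard map
\begin{eqnarray*}
(\mathcal{K}Y)(t) := S(t)X_0 + \int_0^t S(t-s) F(Y(s))\,ds + \int_0^t S(t-s) B(Y(s))\,dW(s)
\end{eqnarray*}
on the Banach space $\mathcal{H}_T$ of $H$-valued predictable processes $Y$ with $\sup_{t \in [0,T]} \mathbb{E}\|Y(t)\|^2 < \infty$, equipped with the weighted norm $\|Y\|_\lambda := \sup_{t \in [0,T]} e^{-\lambda t}(\mathbb{E}\|Y(t)\|^2)^{1/2}$. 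Well-definedness of $\mathcal{K}$ follows from $\|S(t)\|_{L(H)} \leq M$ (consequence of \propref{prop1}), the linear growth bounds on $F$ and $B$ already derived from \assref{assumption1} and \assref{assumption2}, and Itô's isometry for the stochastic convolution.

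For contractivity, split $\mathcal{K}Y_1 - \mathcal{K}Y_2$ into its drift and diffusion parts, apply Cauchy--Schwarz in time to the first and Itô's isometry to the second. Together with the Lipschitz bounds on $F$ and $B$ and the uniform bound on $S(t)$, this yields
\begin{eqnarray*}
\mathbb{E}\|\mathcal{K}Y_1(t) - \mathcal{K}Y_2(t)\|^2 \leq C \int_0^t \mathbb{E}\|Y_1(s) - Y_2(s)\|^2 \, ds.
\end{eqnarray*}
Multiplying by $e^{-2\lambda t}$ and taking the supremum gives $\|\mathcal{K}Y_1 - \mathcal{K}Y_2\|_\lambda^2 \leq \frac{C}{2\lambda}\|Y_1 - Y_2\|_\lambda^2$, so for $\lambda$ sufficiently large $\mathcal{K}$ is a strict contraction on $\mathcal{H}_T$. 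Its unique fixed point $X$ is the mild solution, and the probability-one integrability statement follows from the $L^2$ bound. For the $p$th moment estimate \eqref{ineq2}, start from the mild formulation \eqref{eq1}, apply the convexity bound $\|a+b+c\|^p \leq 3^{p-1}(\|a\|^p+\|b\|^p+\|c\|^p)$, and control the stochastic convolution by the Burkholder--Davis--Gundy inequality
\begin{eqnarray*}
\mathbb{E}\Big\| \int_0^t S(t-s)B(X(s))\,dW(s)\Big\|^p \leq C_p\, \mathbb{E}\Big(\int_0^t \|S(t-s)B(X(s))\|_{L_2^0}^2\,ds\Big)^{p/2}.
\end{eqnarray*}
Hölder in time together with the linear growth of $B$ produces the integral inequality $\mathbb{E}\|X(t)\|^p \leq C(1+\mathbb{E}\|X_0\|^p) + C\int_0^t \mathbb{E}\|X(s)\|^p\,ds$, and Gronwall concludes.

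The main obstacle is the pathwise supremum bound $\mathbb{E}\sup_{t \in [0,T]}\|X(t)\|^p$, since the stochastic convolution is not a martingale and BDG cannot be applied directly inside the supremum. The standard remedy is the factorization method of Da Prato--Kwapień--Zabczyk: for $\alpha \in (1/p, 1/2)$ write
\begin{eqnarray*}
\int_0^t S(t-s)B(X(s))\,dW(s) = \frac{\sin(\pi\alpha)}{\pi} \int_0^t (t-s)^{\alpha-1} S(t-s) Y_\alpha(s)\,ds,
\end{eqnarray*}
with $Y_\alpha(s) := \int_0^s (s-r)^{-\alpha} S(s-r) B(X(r))\,dW(r)$. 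Hölder in time pulls the supremum inside the deterministic integral, BDG controls $\mathbb{E}\|Y_\alpha(s)\|^p$ in terms of $\mathbb{E}\|X(r)\|^p$, and a final Gronwall step using the already-established bound on $\sup_t \mathbb{E}\|X(t)\|^p$ delivers the stronger sup-inside-expectation estimate. All the required ingredients are classical; we refer to \cite{DaPZ} for the detailed execution.
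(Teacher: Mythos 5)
Your proposal is correct: the paper gives no proof of this theorem, citing it directly from \cite{DaPZ}, and your outline (Banach fixed point with a weighted norm for existence and uniqueness, BDG plus Gronwall for the $p$th moment bound, and the Da Prato--Kwapie\'n--Zabczyk factorization method for the pathwise supremum when $p>2$) is precisely the classical argument found in that reference. Nothing in your sketch conflicts with the paper's assumptions, so no further comparison is needed.
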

The following theorem proves a regularity result of the mild solution $X$ of \eqref{adr}.
\begin{theorem}
 \label{newtheo}
\textbf{[Regularity of  the mild solution (\cite{GTambueexpoM})]}\\ 
Assume that \assref{assumptionn}, \assref{assumption1} and \assref{assumption2} hold. Let $X$ be the  mild solution of (\ref{adr}) given in (\ref{eq1}). If  $X_{0} \in L_{2}(\mathbb{D},\mathcal{D}((-A)^{\beta/2})),\, \beta \in [0,1)$ 
then  for  all  $ t\in [0,T],\,X(t) \in L_{2}(\mathbb{D},\mathcal{D}((-A)^{\beta/2}))$  with 
\begin{eqnarray*}
\label{regsoluion}
 \left(\mathbb{E}\Vert X(t) \Vert_{\beta}^{2}\right)^{1/2}\leq C  \,\left(1+\left(\mathbb{E}\Vert X_{0}\Vert_{\beta}^{2}\right)^{1/2}\right).\\
\end{eqnarray*}
\end{theorem}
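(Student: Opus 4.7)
The plan is to apply the fractional power $(-A)^{\beta/2}$ to the mild formulation (\ref{eq1}) and estimate each of the three resulting terms in $L^{2}(\mathbb{D},H)$ via the triangle inequality:
\begin{equation*}
\bigl(\mathbb{E}\Vert X(t)\Vert_{\beta}^{2}\bigr)^{1/2} \leq I_{1}(t) + I_{2}(t) + I_{3}(t),
\end{equation*}
where $I_{1},I_{2},I_{3}$ correspond respectively to the semigroup term, the deterministic convolution with $F$, and the stochastic convolution with $B$.

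For $I_{1}$, I would use the commutation identity $(-A)^{\beta/2}S(t)X_{0}=S(t)(-A)^{\beta/2}X_{0}$ from \propref{prop1} (valid on $\mathcal{D}((-A)^{\beta/2})$), together with the uniform bound $\Vert S(t)\Vert_{L(H)}\leq C$ on $[0,T]$, to obtain $I_{1}(t)\leq C(\mathbb{E}\Vert X_{0}\Vert_{\beta}^{2})^{1/2}$. For $I_{2}$, I would apply Minkowski's inequality to move the $L^{2}(\mathbb{D})$ norm inside the Bochner integral, then combine the smoothing estimate $\Vert(-A)^{\beta/2}S(t-s)\Vert_{L(H)}\leq C(t-s)^{-\beta/2}$ from \propref{prop1} with the linear growth of $F$ from \assref{assumption1} and the a priori moment bound $\sup_{s\in[0,T]}\mathbb{E}\Vert X(s)\Vert^{2}\leq C(1+\mathbb{E}\Vert X_{0}\Vert^{2})$ supplied by \thmref{existth}. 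The resulting singular integral $\int_{0}^{t}(t-s)^{-\beta/2}ds\leq C T^{1-\beta/2}$ is finite since $\beta<2$.

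The main work is for $I_{3}$. Ito's isometry gives
\begin{equation*}
\mathbb{E}\Bigl\Vert\int_{0}^{t}(-A)^{\beta/2}S(t-s)B(X(s))\,dW(s)\Bigr\Vert^{2} = \int_{0}^{t}\mathbb{E}\Vert(-A)^{\beta/2}S(t-s)B(X(s))\Vert_{L_{2}^{0}}^{2}\,ds.
\end{equation*}
I would apply inequality (\ref{eqnn}) of \propref{proposition} with $l=(-A)^{\beta/2}S(t-s)\in L(H)$ and $l_{2}=B(X(s))Q^{1/2}\in\mathcal{L}_{2}(H)$ to pull out the smoothing factor, then use (\ref{noise}) and again the moment bound from \thmref{existth}. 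This reduces the task to controlling $\int_{0}^{t}(t-s)^{-\beta}\bigl(1+\mathbb{E}\Vert X(s)\Vert^{2}\bigr)ds$, whose time integral is finite \emph{precisely because} $\beta<1$. This is the sharp restriction driving the hypothesis.

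The main obstacle is purely book-keeping the fractional powers and making sure the restriction $\beta\in[0,1)$ is used exactly once in the stochastic term; all the deterministic estimates already work up to $\beta<2$. Finally, to convert $\mathbb{E}\Vert X_{0}\Vert^{2}$ appearing through \thmref{existth} into $\mathbb{E}\Vert X_{0}\Vert_{\beta}^{2}$, I would use the continuous embedding $\mathcal{D}((-A)^{\beta/2})\hookrightarrow H$, i.e.\ $\Vert v\Vert\leq\Vert(-A)^{-\beta/2}\Vert_{L(H)}\,\Vert v\Vert_{\beta}$, which follows from \assref{assumptionn}. Summing the three bounds then yields the claimed estimate with a constant $C=C(T,L,\beta)>0$.
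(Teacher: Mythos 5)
The paper does not prove this theorem; it is quoted from \cite{GTambueexpoM}. Your argument is the standard one that the cited reference follows: split the mild solution into the three terms, commute $(-A)^{\beta/2}$ with the semigroup for the initial-data term, use the smoothing estimate $\Vert(-A)^{\beta/2}S(t-s)\Vert_{L(H)}\leq C(t-s)^{-\beta/2}$ plus the second-moment bound of \thmref{existth} for the drift convolution, and the It\^{o} isometry plus the same smoothing estimate for the stochastic convolution, where the integrability of $(t-s)^{-\beta}$ is exactly the source of the restriction $\beta<1$. This is correct as written; the only point worth flagging is that pulling $(-A)^{\beta/2}$ inside the two integrals should be justified by the closedness of $(-A)^{\beta/2}$ once the integrand bounds are in hand, and that your use of \eqref{eqnn} with $l_{2}=B(X(s))Q^{1/2}$ tacitly identifies the $L_{2}^{0}$-bound on $B$ with the $\mathcal{L}_{2}(H)$-bound of \assref{assumption2}, consistent with the paper's own conventions.
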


For the weak error represention,  we will need the following lemma.
\begin{lemma}[It\^{o}'s formula]\label{lemmaIto}
Let $(\mathbb{D}, \mathcal{F},P; \left\lbrace F_{t}\right\rbrace_{t\geq 0}$) be a filtered probability space. Let
$\phi$ and $\Psi$ be $H-$valued predictable processes, Bochner integrable on $[0,T]$ P-almost surely (see \cite{KovacsLarsson2008}) , and $Y_0$
 be an $F_0$-measurable $H-$valued random variable. Let $G:[0,T]\times H\rightarrow \mathbb{R}$  and assume that the
 Fr\'{e}chet derivatives $G_t(t,x),\, G_x(t,x),$ and $G_{xx}(t,x)$ are uniformly continuous as functions
of $(t,x)$ on bounded subsets of $[0;T]\times H.$ Note that, for
fixed $t,\,\, G_x(t,x)\in L(H,\mathbb{R})$ and we consider $G_{xx}(t,x)$ as an element
of $L(H)$.  Let $W$ be the $Q$-Wiener process.  If $Y$ satisfies
\begin{equation} Y(t) = Y(0) + \displaystyle\int_0^t\phi(s)ds + \displaystyle\int_0^t \Psi(s)dW(s),\end{equation}
then P-almost surely for all $t\in [0,T],$
\begin{equation}
\begin{array}{rcl}G(t,Y(t))&=& G(0,Y(0)) + \displaystyle\int_0^t G_x(s,Y(s))\Psi(s)dW(s) \\
&& +\displaystyle\int_0^t\Big\{G_t(s,Y(s)) + G_x(s,Y(s))\phi(s)
 +\frac{1}{2}\text{Tr}\left(G_{xx}(s,Y(s))\Psi(s)Q^{\frac{1}{2}}(\Psi(s)Q^{\frac{1}{2}})^*\right)\Big\}\,ds \end{array}\end{equation}
\end{lemma}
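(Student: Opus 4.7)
The plan is to follow the standard approach for establishing It\^{o}'s formula for Hilbert-valued processes, as developed in Da Prato--Zabczyk \cite{DaPZ} and in the cited reference \cite{KovacsLarsson2008}. I would proceed in three stages: first reduce to elementary integrands by localization and density, then telescope over a time partition and Taylor-expand $G$ to second order, and finally identify each resulting sum with its claimed limit, the non-trivial piece being the It\^{o} correction term.

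For the reduction, a stopping-time localization lets me assume $Y$, $\phi$, and $\Psi$ are uniformly bounded on $[0,T]\times\mathbb{D}$, so that the assumed uniform continuity of $G_t$, $G_x$, $G_{xx}$ on bounded sets applies along the relevant trajectories. Density of elementary adapted processes in the space of predictable $L_{2}^{0}$-valued processes that are square-integrable on $\mathbb{D}\times[0,T]$, combined with It\^{o}'s isometry, then reduces matters to the case where $\phi$ and $\Psi$ are piecewise constant on a deterministic partition $0 = t_0 < \cdots < t_n = t$. For such a partition I telescope
\[
G(t, Y(t)) - G(0, Y(0)) = \sum_{k=0}^{n-1} \bigl[G(t_{k+1}, Y(t_{k+1})) - G(t_k, Y(t_k))\bigr],
\]
and apply a second-order Taylor expansion in both arguments about $(t_k, Y(t_k))$. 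Writing $\Delta Y_k = Y(t_{k+1}) - Y(t_k) = \phi_k \Delta t_k + \Psi_k \Delta W_k$, with $\Delta t_k = t_{k+1}-t_k$ and $\Delta W_k = W(t_{k+1}) - W(t_k)$, each summand becomes
\[
G_t(t_k, Y(t_k))\Delta t_k + G_x(t_k, Y(t_k))\Delta Y_k + \tfrac{1}{2} G_{xx}(t_k, Y(t_k))[\Delta Y_k, \Delta Y_k] + R_k,
\]
where $R_k$ is the second-order Taylor remainder. As $\max_k \Delta t_k \to 0$, continuity of $G_t$ and $G_x$ combined with It\^{o}'s isometry identify the first two groups of terms with $\int_0^t[G_t(s,Y(s)) + G_x(s,Y(s))\phi(s)]\,ds$ and $\int_0^t G_x(s,Y(s))\Psi(s)\,dW(s)$, respectively; here adaptedness of $Y$ ensures $G_x(t_k, Y(t_k))$ is $\mathcal{F}_{t_k}$-measurable, so the stochastic integral approximation is legitimate.

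The delicate step, which I expect to be the main obstacle, is the quadratic term $\tfrac{1}{2}\sum_k G_{xx}(t_k, Y(t_k))[\Delta Y_k, \Delta Y_k]$. Its drift-drift piece is pointwise $O((\Delta t_k)^2)$ and vanishes; its drift-noise cross piece has conditional mean zero and $L^2$-norm $O((\Delta t_k)^{3/2})$ and also vanishes. The genuine contribution is the noise-noise piece $G_{xx}(t_k, Y(t_k))[\Psi_k \Delta W_k, \Psi_k \Delta W_k]$, whose $\mathcal{F}_{t_k}$-conditional expectation is exactly $\text{Tr}\bigl(G_{xx}(t_k, Y(t_k)) \Psi_k Q^{1/2}(\Psi_k Q^{1/2})^*\bigr)\Delta t_k$, via the Gaussian identity $\EE[\langle \Psi_k \Delta W_k, u\rangle \langle \Psi_k \Delta W_k, v\rangle \mid \mathcal{F}_{t_k}] = \Delta t_k \langle \Psi_k Q \Psi_k^* u, v\rangle$. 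A standard second-moment/martingale concentration argument then forces the sum to its conditional mean in the limit, producing the trace integral in the statement; the Hilbert--Schmidt and trace-class estimates collected in \propref{proposition}, especially \eqref{eqnn}, are needed to bound these trace terms uniformly in $k$. Finally, the Taylor remainders satisfy $\EE|R_k| \le \varepsilon_k\,\EE\|\Delta Y_k\|^2 = \varepsilon_k\, O(\Delta t_k)$ with $\varepsilon_k \to 0$ by uniform continuity of $G_{xx}$, so $\sum_k R_k \to 0$ in $L^1(\mathbb{D})$ and the identification is complete.
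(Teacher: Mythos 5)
Your sketch is correct and follows the canonical Da Prato--Zabczyk argument (localization, reduction to elementary integrands, telescoping with a second-order Taylor expansion, and identification of the quadratic term with the trace via the Gaussian covariance identity), which is precisely the proof the paper relies on: the paper does not prove \lemref{lemmaIto} itself but simply cites \cite{DaPZ} for it. No discrepancy to report.
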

A proof of this lemma can be found in \cite{DaPZ}.

\section{Application to the second order  semi--linear parabolic SPDE}
\label{sec2}
We assume that $\Omega$  has a smooth boundary or is a convex polygon of $\mathbb{R}^{d},\;d=1,2,3$. 
In the sequel, for convenience of presentation, we take $A$ to be a second order
operator as this simplifies the convergence proof. The result can be extended to high order semi linear parabolic SPDE.

More precisely we take $H=L^{2}(\Omega)$  and  consider the general second order
semi--linear parabolic stochastic partial differential equation given by
\begin{eqnarray}
\label{sadr}
 dX(t,x)=\left(\nabla \cdot \textbf{D} \nabla X(t,x) -\mathbf{q} \cdot \nabla X(t,x) + f(x,X(t,x))\right) dt +b(x,X(t,x))dW(t,x),
\end{eqnarray}
$x \in \Omega, t\in[0,T]$
where $f:\Omega \times\mathbb{R} \rightarrow \mathbb{R}$ is a
globally Lipschitz continuous function  and $b:\Omega \times\mathbb{R} \rightarrow \mathbb{R}$ is a
continuously differentiable function with globally bounded derivatives.
\subsection{The abstract setting for second order semi--linear parabolic SPDE} 
In the abstract form given in \eqref{adr}, the nonlinear functions $F : H \rightarrow H$  and  $ B : H\rightarrow HS(Q^{1/2}(H), H)$ are defined by
\begin{eqnarray}
\label{nemform}
(F(v))(x)=f(x,v(x)),\qquad (B(v)u)(x)=b(x,v(x))\cdot u(x),
\end{eqnarray}
for  all $ x\in \Omega,\;v\in H,\; u \in Q^{1/2}(H)$, with $H=L^{2}(\Omega)$.
Note that we can also define  $ B : H\rightarrow  \mathcal{L}_2(H)$ by 
\begin{eqnarray}
(B(v)u)(x)=b(x,v(x))\cdot Q^{1/2}u(x),
\end{eqnarray}
for  all $ x\in \Omega,\;v\in H,\; u \in H$.

In  order to define rigorously the linear operator, let us set
\begin{eqnarray}\label{eq:A}
 \mathcal{A}&=&\underset{i,j=1}{\sum^{d}}\dfrac{\partial }{\partial x_{i}}\left( D_{i,j}\dfrac{\partial
 }{\partial x_{j}}\right) - \underset{i=1}{\sum^{d}}q_{i}\dfrac{\partial
 }{\partial x_{i}},
\end{eqnarray}
where we assume that  $D_{i,j} \in L^{\infty}(\Omega),\,q_{i}\in L^{\infty}(\Omega)$ and that there exists a positive constant $c_{1}>0$ such that
\begin{eqnarray}
\label{ellipticity}
\underset{i,j=1}{\sum^{d}}D_{i,j}(x)\xi_{i}\xi_{j}\geq c_{1}\vert \xi \vert^{2},  \;\;\;\;\;\;\forall \xi \in \mathbb{R}^{d},\;\;\; x \in \overline{\Omega},\;\;\; c_{1}>0.
\end{eqnarray}
We introduce two spaces $\HH$ and $V$ where $\HH\subset V $  depends
on the choice of the boundary conditions for the SPDE.
For Dirichlet boundary conditions we let 
\begin{eqnarray*}
V= \HH= H_{0}^{1}(\Omega)=\{v\in H^{1}(\Omega): v=0\;\;
\text{on}\;\;\partial \Omega\}, 
\end{eqnarray*}
and for Robin boundary conditions, Neumann boundary being a special case, 
we take $V=  H^{1}(\Omega)$ and 
\begin{eqnarray*}
\HH = \left\lbrace v\in H^{2}(\Omega): \partial v/\partial
  \nu_{\mathcal{A}}+\alpha_{0} v=0\quad \text{on}\quad \partial \Omega\right\rbrace, \qquad \alpha_{0} \in \mathbb{R}.
 \end{eqnarray*}
See \cite{lions} for details.
The corresponding bilinear form of $ -\mathcal{A}$ is given by
\begin{eqnarray}
\label{var}
a(u,v)=\int_{\Omega}\left(\underset{i,j=1}{\sum^{d}} D_{i,j}\dfrac{\partial u}{\partial x_{j}} \dfrac{\partial v}{\partial x_{i}}+\underset{i=1}{\sum^{d}}q_{i} \dfrac{\partial u}{\partial x_{j}}v\right)dx,\;\;\;\;\;\;\; u, v \in V
\end{eqnarray}
for Dirichlet  and Neumann boundary conditions, and by
\begin{eqnarray}
\label{var1}
a(u,v)=\int_{\Omega}\left(\underset{i,j=1}{\sum^{d}} D_{i,j}\dfrac{\partial u}{\partial x_{j}} \dfrac{\partial v}{\partial x_{i}}+\underset{i=1}{\sum^{d}}q_{i} \dfrac{\partial u}{\partial x_{j}}v\right)dx
+\int_{\partial \Omega} \alpha_{0} u\,v\,dx, \;\;\;\;\;\;\; u, v \in V,
\end{eqnarray}
for Robin boundary conditions.
According to G\aa{}rding's inequality (see \cite{AtThesis,lions}),
there exist two positive constants $c_{0}$ and $\lambda_{0}$  such
that 
 \begin{eqnarray}
 \label{coer}
  a(v,v)+c_{0}\Vert v\Vert^{2}\geq  \lambda_{0}\Vert v\Vert_{H^{1}(\Omega)}^{2},\;\;\; \quad \quad \forall v\in V.
\end{eqnarray}

By adding and subtracting $c_{0}X dt$ on the right hand side of (\ref{sadr}), we have a new operator that we still call $\mathcal{A}$  corresponding to the new bilinear form that we still call 
$a$ such that the following  coercivity property holds
\begin{eqnarray}
\label{ellip}
a(v,v)\geq \; \lambda_{0}\Vert v\Vert_{H^{1}(\Omega)}^{2},\;\;\;\;\;\forall v \in V.
\end{eqnarray}
Note that the expression of the nonlinear term $F$ has changed as we include the term $-c_{0}X$ 
in a new nonlinear term that we still denote by $F$.

Note that $a(,)$ is bounded in $V\times V$, so the following  operator $A: V\rightarrow V^{*}$ is well  defined by the Riez's representation theorem
\begin{eqnarray}
\label{opA}
 a(u,v)=-\langle Au, v\rangle,\,\,\,\, \forall u, v \in V,
\end{eqnarray}
where $V^{*}$ is the adjoint space (or dual space) of $V$ and $\langle ,\rangle$ the duality pairing between $V^{*}$ and $V$. By  identifying $H$ to its adjoint space $H^{*}$, 
we get the following  continuous and dense inclusions
\begin{eqnarray}
\label{gerland}
 V\subset H \subset V^{*}.
\end{eqnarray}
So, we have 
\begin{eqnarray}
\label{gerland1}
 \langle u,v\rangle_H =\langle u, v \rangle \qquad \qquad \qquad \forall u\in  H,\, \forall  v\in V.
\end{eqnarray}
The linear operator $A$ in our abstract setting \eqref{adr} is therefore defined by \eqref{opA}.
The domain of $A$ denoted by  $\mathcal{D}(A)$ is defined by 
\begin{eqnarray}
 \mathcal{D}(A)= \{ u \in V,\, Au \in H \}. 
\end{eqnarray}
We write the restriction of $A: V\rightarrow V^{*}$ to $\mathcal{D}(A)$ again by  $A$, which is therefore regarded as an operator of $H$ (more precisely the $H$ realization of $\mathcal{A}$ \cite[p. 812]{lions}).
The coercivity property (\ref{ellip}) implies that $A$ is a sectorial on $L^{2}(\Omega)$ i.e.  there exists $C_{1},\, \theta \in (\frac{1}{2}\pi,\pi)$ such that
\begin{eqnarray}
 \Vert (\lambda I -A )^{-1} \Vert_{L(L^{2}(\Omega))} \leq \dfrac{C_{1}}{\vert \lambda \vert }\;\quad \quad 
\lambda \in S_{\theta},
\end{eqnarray}
where $S_{\theta}=\left\lbrace  \lambda \in \mathbb{C} :  \lambda=\rho e^{i \phi},\; \rho>0,\;0\leq \vert \phi\vert \leq \theta \right\rbrace $ (see \cite{Henry,lions}).
 Then  $A$ is the infinitesimal generator of bounded analytic semigroups $S(t):=e^{t A}$  on $L^{2}(\Omega)$  such that
\begin{eqnarray}
S(t):= e^{t A}=\dfrac{1}{2 \pi i}\int_{\mathcal{C}} e^{ t\lambda}(\lambda I - A)^{-1}d \lambda,\;\;\;\;\;\;\;
\;t>0,
\end{eqnarray}
where $\mathcal{C}$  denotes a path that surrounds the spectrum of $A $.
The condition property \eqref{ellip} also implied that $-A$ is a positive operator and its fractional powers is well defined  for any $\alpha>0,$ by
\begin{equation}
 \left\{\begin{array}{rcl}
         (-A)^{-\alpha} & =& \frac{1}{\Gamma(\alpha)}\displaystyle\int_0^\infty  t^{\alpha-1}{\rm e}^{tA}dt,\\
         (-A)^{\alpha} & = & ((-A)^{-1})^{-1},
        \end{array}\right.
\end{equation}
where $\Gamma(\alpha)$ is the Gamma function (see \cite{Henry}).
Functions in $\HH$ satisfy the boundary
conditions and with $\HH$ in hand we can characterize the
domain of the operator $(-A)^{\alpha/2}$ and have the following norm
equivalence (\cite{lions,Stig,ElliottLarsson}) for $\alpha\in \{1,2\}$
\begin{eqnarray*}
\Vert v \Vert_{H^{\alpha}(\Omega)} &\equiv &\Vert (-A)^{\alpha/2} v
\Vert=:\Vert  v \Vert_{\alpha}, \qquad 
\forall v\in \mathcal{D}((-A)^{\alpha/2}),\\
\mathcal{D}((-A)^{\alpha/2})&=&  \HH\cap H^{r}(\Omega)\,\,\qquad \qquad \,\text{ (Dirichlet boundary conditions)},\\
\mathcal{D}(-A)&=&\HH,\,\,\,\qquad\mathcal{D}((-A)^{1/2})= H^{1}(\Omega)\,\,\,\qquad\text{(Robin boundary conditions)}.
\end{eqnarray*} 
\subsection{Numerical scheme and weak error representation}
We consider the discretization of the spatial domain by a finite element method.
Let $\mathcal{T}_{h}$ be a set of disjoint intervals  of $\Omega$
(for $d=1$), a triangulation of $\Omega$ (for $d=2$) or a set of
tetrahedra (for $d=3$) with maximal length $h$ satisfying the usual regularity assumptions \cite{EP,Gyongy2}.

Let $V_{h}\subset V$ denote the space of continuous functions that are
piecewise linear over the triangulation $\mathcal{T}_{h}$. Note that for high order polynomial,  high  order accuracy in space  can be achieved.
To discretize in space we introduce the projection $P_h$ from $L^{2}(\Omega)$  onto $V_{h}$ defined  for  $u
\in L^{2}(\Omega)$ by
\begin{eqnarray}
 (P_{h}u,\chi)=(u,\chi),\qquad \forall\;\chi \in V_{h}.
\end{eqnarray}
The discrete operator $A_{h}: V_{h}\rightarrow V_{h}$ is  defined by
\begin{eqnarray}
( A_{h}\varphi,\chi)=(A\varphi,\chi)=-a(\varphi,\chi),\qquad \varphi,\chi \in V_{h}.
\end{eqnarray}
Like the operator $A$, the discrete operator $A_h$ is also the generator of an analytic semigroup  $S_h:=e^{tA_{h}}$.
Here we  consider the following  semi--discrete form  of the problem (\ref{adr}), which consists to
find the process $X^{h}(t)=X^{h}(.,t) \in V_{h}$ such  that for $t
\in[0, T]$,
\begin{eqnarray}
\label{dadr}
  dX^{h}=(A_{h}X^{h} +P_{h}F(X^{h}))dt + P_{h} B(X^{h}) P_{h}d W,\qquad
  X^{h}(0)=:X_0^h=P_{h}X_{0}.
\end{eqnarray}
Note that \eqref{dadr} is a  finite dimensional stochastic equation.
The  mild solution of (\ref{dadr})  at  time $t_{m}=m \Delta t $ is given by
\begin{eqnarray}
\label{dmild}
  X^{h}(t_{m})&=&S_{h}(t_{m})P_{h}X_{0}+\displaystyle\int_{0}^{t_{m}} S_{h}(t_{m}-s) P_{h}F(X^{h}(s))ds \nonumber \\ && +
 \displaystyle\int_{0}^{t_{m}} S_{h}(t_{m}-s)P_{h}B(X^{h}(s)) P_{h}d W(s),
\end{eqnarray}
where $ \Delta t=T/M,\;\;\; m \in \{0,1,...,M \}, \;\,M \in \mathbb{N}$.

Then, given the mild solution at the time $t_{m}$,  we can construct the corresponding solution at $t_{m+1}$ as
\begin{eqnarray*}
 X^{h}(t_{m+1})&=&S_{h}(\Delta t)X^{h}(t_{m})+\displaystyle\int_{0}^{ \Delta t} S_{h}( \Delta t-s) P_{h}F(X^{h}(s+t_{m}))ds  \\ && +\displaystyle\int_{t_{m}}^{t_{m+1}} S_{h}(t_{m+1}-s)P_{h}B(X^{h}(s)) P_{h}d W(s).
\end{eqnarray*}
To build the numerical scheme, we use the following approximations \cite{GTambueexpoM}
\begin{eqnarray*}
P_{h}  F(X^{h}( t_{m}+s))&\approx&  P_{h}F(X^{h}( t_{m})),\;\;\;\;\; \;\;\;\;\;s \in [0,\; \Delta t],\\
 S_{h}(t_{m+1}-s)P_{h}B(X^{h}(s))&\approx& S_{h}(\Delta t) P_{h}B(X^{h}(t_{m})),\;\;\; s\in [t_{m},t_{m+1}].
\end{eqnarray*}
We can define our approximation $X_{m}^{h}$ of  $X(m \Delta t)$ by
\begin{eqnarray}
\label{new}
 X_{m+1}^{h}=e^{\Delta t A_{h}}X_{m}^{h}+A_{h}^{-1}\left( e^{\Delta t A_{h}}-I\right)P_{h}F(X_{m}^{h})+e^{\Delta t A_{h}}P_{h}B(X_{m}^{h}) P_{h}\left( W_{t_{m+1}}-W_{t_{m}}\right).
\end{eqnarray}
We introduce for brevity the notations
$$S_h(t) ={\rm e}^{tA_h},\quad S_h^{1}(t) = (tA_h)^{-1}({\rm e}^{tA_h}-I), \quad B_h: X \mapsto P_h B(X)P_h.$$
We can then rewrite the scheme (\ref{new}) as
\begin{eqnarray} \label{eq:fvscheme}
 X_{m+1}^{h}=S_h(\Delta t)X_{m}^{h}+\Delta t S_h^1(\Delta t )P_{h}F(X_{m}^{h})+
 S_h(\Delta t)B_h(X_{m}^{h})\left( W_{t_{m+1}}-W_{t_{m}}\right).
\end{eqnarray}
In order to study the weak convergence of the approximation of the solutions we define the functional
\begin{equation}
 \label{eq:mu} \mu^h(t,\psi) = \mathbb{E}\left[\Phi(X^h(t,\psi)\right],\,\,t\in[0,T],\,\psi\in V_h,
\end{equation}
where $X^h(t,\psi)$ is defined by \eqref{dmild} with the initial value $ X_0^h=\psi\in V_h.$ 
It can be shown (see Theorem 9.16 of [9]) that $\mu^h(t,\psi)$ defined by \eqref{eq:mu} is differentiable with respect to $t$ 
and twice differentiable with respect to $\psi,$ and is the unique strict solution of 
\begin{equation}\label{eq:Kolmogorov}
\left\{\begin{array}{rcl}
\frac{\partial \mu^h}{\partial t}(t,\psi) & = & \langle A_h\psi + P_hF(\psi) , D\mu^h(t,\psi)\rangle_H +
\frac{1}{2}\text{Tr}\left[D^2\mu^h(t,\psi)B_h(\psi) B_h (\psi)^*\right]\\
\mu^h(0,\psi)&=& \Phi(\psi),\quad \psi\in V_h.\end{array}\right.\end{equation}

We can here, using the Riesz representation theorem, identify  the first order  derivative of $\mu^h(t,\psi)$ with respect to $\psi,$ denoted as $D\mu^h(t,\psi)$ with an element
of $V_h$ and the second derivative denoted as $D^2\mu^h(t,\psi)$ with a linear operator in $V_h.$  More precisely 
\begin{eqnarray}
\label{derivation}
D\mu^h(t,\psi)(\phi_1)&=&\langle D\mu^h(t,\psi),\phi_1\rangle_H,\,\,\, \forall \,\psi, \phi_1  \in V_h,\\
 D^2\mu^h(t,\psi)(\phi_1,\phi_2) &=& \langle D^2\mu^h(t,\psi)\phi_1,\phi_2\rangle_H, \,\,\forall \,\psi, \phi_1,\phi_2 \in V_h.
\end{eqnarray}

The following theorem, which is similar to \cite[Theorem 2.2]{Wang2014} is fundamental for our convergence proofs.
\begin{theorem}[Weak error representation formula for the semi discrete problem]
Assume that all the conditions in the assumptions above are fulfilled and let $\{W(t)\}_{t\in[0,T]}$
be a cylindrical $H-$valued $Q-$Wiener process. Then for $\Phi\in\mathcal{C}_b^2(H,\mathbb{R})$ 
the weak approximation error of the scheme in \eqref{eq:fvscheme} has the representation

\begin{equation}\label{eq:errorrep}\displaystyle
\begin{array}{l}
\mathbb{E}[\Phi(X^h_M)]-\mathbb{E}[\Phi(X^h(T))] \\
= \sum\limits_{m=0}^{M-1}\left\{\displaystyle\int_{t_m}^{t_{m+1}}\EE\left[\langle D\mu^h(T-s,\tilde X^h(s)),P_hF(X_m^h)-P_hF(\tilde X^h(s))\rangle_H \right]ds\right.\\
     \hskip 1cm + \frac{1}{2}\EE\displaystyle\int_{t_m}^{t_{m+1}}\text{Tr}\left[D^2\mu^h(T-s,\tilde X^h(s))\left(\left\{S_h(s-t_m)
     B_h(X_m^h)\right\}\left\{S_h(s-t_m)B_h(X_m^h)\right\}^*\right.\right.\\
     \hskip 7cm\left. \left. \left.- B_h(X_m^h)B_h(X_m^h)^*\right)\right]ds \right. \Big\},
\end{array}
\end{equation}
where $\tilde X^h(t)$ is a continuous extension of $X_m^h,$ defined by
\begin{equation}\label{eq:xtilde}
\begin{array}{rcl}
\tilde X^h(t) &= &S_h(t-t_m)X_m^h + (t-t_m)S_h^1(t-t_m)P_hF(X_m^h) \\
&& \hskip 2cm + S_h(t-t_m)B_h(X_m^h)(W(t)-W(t_m)),\quad \text{ for } t\in [t_m, t_{m+1}].\end{array}
\end{equation}
\end{theorem}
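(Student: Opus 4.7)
My plan is to derive the representation via the standard Kolmogorov--Itô approach, following the proof strategy of the cited Wang--Gan paper. The key observation is that the process $\tilde X^h(t)$ is a continuous interpolant that agrees with $X_m^h$ at every grid point: direct inspection of \eqref{eq:xtilde} shows $\tilde X^h(t_m)=X_m^h$ for $m=0,\ldots,M$, and in particular $\tilde X^h(0)=X_0^h$ and $\tilde X^h(T)=X_M^h$. Hence, using $\mathbb{E}[\Phi(X^h(T))]=\mu^h(T,X_0^h)$ and $\mathbb{E}[\Phi(X^h_M)]=\mathbb{E}[\mu^h(0,\tilde X^h(T))]$, the weak error telescopes as
\begin{equation*}
\mathbb{E}[\Phi(X^h_M)]-\mathbb{E}[\Phi(X^h(T))]=\sum_{m=0}^{M-1}\mathbb{E}\bigl[\mu^h(T-t_{m+1},\tilde X^h(t_{m+1}))-\mu^h(T-t_m,\tilde X^h(t_m))\bigr].
\end{equation*}

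Next I would derive the stochastic differential of $\tilde X^h$ on each subinterval $[t_m,t_{m+1}]$ by differentiating the explicit expression in \eqref{eq:xtilde}. Using $(t-t_m)S_h^1(t-t_m)=A_h^{-1}(S_h(t-t_m)-I)$, differentiating the three summands of $\tilde X^h(t)$ and rearranging produces the convenient form
\begin{equation*}
d\tilde X^h(t)=\bigl(A_h\tilde X^h(t)+P_hF(X_m^h)\bigr)\,dt+S_h(t-t_m)B_h(X_m^h)\,dW(t),\qquad t\in[t_m,t_{m+1}],
\end{equation*}
where the $A_h$ part collects the semigroup derivatives acting on all three terms and the $(S_h-I)P_hF(X_m^h)$ contribution is cancelled by $I\cdot P_hF(X_m^h)$ to leave $P_hF(X_m^h)$.

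With this SDE in hand, I would apply It\^o's formula (\lemref{lemmaIto}) to the function $G(s,\psi)=\mu^h(T-s,\psi)$ on each $[t_m,t_{m+1}]$. This gives, for each $m$,
\begin{equation*}
\begin{array}{l}
\mu^h(T-t_{m+1},\tilde X^h(t_{m+1}))-\mu^h(T-t_m,\tilde X^h(t_m))\\
\quad=\displaystyle\int_{t_m}^{t_{m+1}}\Bigl\{-\tfrac{\partial\mu^h}{\partial t}(T-s,\tilde X^h(s))+\langle D\mu^h(T-s,\tilde X^h(s)),A_h\tilde X^h(s)+P_hF(X_m^h)\rangle_H\\
\quad\quad+\tfrac{1}{2}\text{Tr}\bigl[D^2\mu^h(T-s,\tilde X^h(s))\{S_h(s-t_m)B_h(X_m^h)\}\{S_h(s-t_m)B_h(X_m^h)\}^*\bigr]\Bigr\}\,ds\\
\quad\quad+\displaystyle\int_{t_m}^{t_{m+1}}\langle D\mu^h(T-s,\tilde X^h(s)),S_h(s-t_m)B_h(X_m^h)\,dW(s)\rangle_H.
\end{array}
\end{equation*}
Substituting the Kolmogorov equation \eqref{eq:Kolmogorov} (evaluated at $(T-s,\tilde X^h(s))$) to replace $\partial_t\mu^h$, the term $\langle D\mu^h,A_h\tilde X^h(s)\rangle_H$ cancels between the Itô drift and the Kolmogorov drift, leaving $\langle D\mu^h,P_hF(X_m^h)-P_hF(\tilde X^h(s))\rangle_H$ together with the trace of $D^2\mu^h$ against the difference of the two covariance-type operators $\{S_hB_h\}\{S_hB_h\}^*$ and $B_h(\tilde X^h(s))B_h(\tilde X^h(s))^*$. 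Taking expectation annihilates the stochastic integral (by Itô's isometry the integrand is square-integrable since we work in the finite-dimensional space $V_h$), and summing over $m$ yields \eqref{eq:errorrep}.

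The main obstacle is justifying all the inputs required for It\^o's formula: the required smoothness of $\mu^h$ in $(t,\psi)$ and the validity of the Kolmogorov equation \eqref{eq:Kolmogorov}, which are supplied by the quoted Theorem~9.16 of Da Prato--Zabczyk applied to the finite-dimensional SDE \eqref{dadr} in $V_h$ (where $B_h$ is Lipschitz and the coefficients inherit the regularity of $F$ and $B$). A secondary point worth verifying is the form of the trace term as written: my derivation naturally produces $B_h(\tilde X^h(s))B_h(\tilde X^h(s))^*$ as the subtrahend, and to obtain $B_h(X_m^h)B_h(X_m^h)^*$ as displayed one must either read the statement with that replacement understood (relying on the $\tilde X^h(s)\to X_m^h$ error being absorbed into the $F$ comparison) or add and subtract $B_h(X_m^h)B_h(X_m^h)^*$ and carry an extra term controlled by the Lipschitz property of $B$.
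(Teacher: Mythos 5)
Your proof is correct, but it takes a genuinely different route from the paper's. You apply It\^o's formula directly to $G(s,\psi)=\mu^h(T-s,\psi)$ along the interpolant $\tilde X^h$, after computing its SDE $d\tilde X^h=(A_h\tilde X^h+P_hF(X_m^h))\,dt+S_h(s-t_m)B_h(X_m^h)\,dW$ (your computation of this differential is right), and you let the $\langle D\mu^h,A_h\tilde X^h\rangle_H$ terms cancel against the Kolmogorov equation \eqref{eq:Kolmogorov}. The paper instead first transforms: it sets $\nu^h(t,\psi)=\mu^h(t,S_h(-t)\psi)$ and $\tilde Z^h(t)=S_h(T-t)\tilde X^h(t)$, so that $\tilde Z^h$ has no $A_h$ term in its drift, derives a modified Kolmogorov equation \eqref{eq:pdenu} for $\nu^h$, applies It\^o to $\nu^h(T-t,\tilde Z^h(t))$, and then translates back via the identifications \eqref{deri}--\eqref{iden}. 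In the finite-dimensional space $V_h$, where $A_h$ is bounded, your direct argument is perfectly legitimate and arguably cleaner; the paper's change of variables is the device one would need in infinite dimensions, where the mild solution is not a strong solution and It\^o's formula cannot be applied to $\mu^h$ along it directly. Both proofs share the same skeleton: telescoping $\EE[\Phi(X_M^h)]-\EE[\Phi(X^h(T))]$ over the grid intervals, It\^o on each interval, the martingale term vanishing in expectation, and cancellation via the Kolmogorov equation.

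Your closing observation about the trace term is a real point, not a defect of your argument: the Kolmogorov equation evaluated at $\psi=\tilde X^h(s)$ produces $B_h(\tilde X^h(s))B_h(\tilde X^h(s))^*$ as the subtrahend, whereas the theorem (and the paper's proof, which silently writes $B_h(X_m^h)$ where its own equation \eqref{eq:pdenu} would supply the diffusion coefficient evaluated at the current state) displays $B_h(X_m^h)B_h(X_m^h)^*$. For additive noise the two coincide; for multiplicative noise one must, as you say, add and subtract $B_h(X_m^h)B_h(X_m^h)^*$ and control the remainder by the Lipschitz property of $B$ and the increment bound on $\|\tilde X^h(s)-X_m^h\|$, which is exactly the kind of term the subsequent convergence analysis absorbs. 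Your handling of this is more careful than the paper's.
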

\begin{proof}
Introduce the process $\nu^h:[0,T]\times V_h\rightarrow \mathbb{R},$ given by
\begin{equation}\label{eq:nu}
\nu^h(x,\psi) = \mu^h(t,S_h(-t)\psi),\end{equation}
which is twice differentiable with respect to $\psi$ and by previous identifications satisfies
\begin{eqnarray}
\label{deri}
D\nu^h(t,\psi)\phi &=& \langle D\mu^h(t,S_h(-t)\psi),S_h(-t)\phi\rangle_H,\,\,\psi, \phi \in V_h\\
\label{iden}
D^2\nu^h(t,\psi)(\phi_1,\phi_2) &=& \langle D^2\mu^h(t,S_h(-t)\psi)(S_h(-t)\phi_1),
S_h(-t)\phi_2\rangle_H,\,\,\psi, \phi_1,\phi_2 \in V_h.
\end{eqnarray}
One can then check that $\nu^h(t,\psi)$ solves the equation
\begin{equation}\label{eq:pdenu}
\begin{array}{rcl}\frac{\partial\nu^h}{\partial t}(t,\psi) &=& \langle D\nu^h(t,\psi),S_h(t)P_hF(S_h(-t)\psi\rangle_H +
\frac{1}{2}\text{Tr}[D^2\nu^h(t,\psi)S_h(t)B_h(\psi)(S_h(t)B_h(\psi))^*],\\
\nu^h(0,\psi) & =& \Phi(\psi),\quad \psi \in V_h. \end{array}
\end{equation}
Indeed, since $\frac{\partial S_h(-t)\psi}{\partial t} = -A_h S_h(-t)\psi,$  we have that
$$\begin{array}{rcl}\frac{\partial\nu^h}{\partial t}(t,\psi)&=& \frac{\partial\mu^h}{\partial t}(t,S_h(-t)\psi)
+ \langle D\mu^h(t,S_h(-t)\psi),-A_h S_h(-t)\psi\rangle_H.
\end{array}
$$
Using the Kolmogorov equation \eqref{eq:Kolmogorov} and  the fact that $\langle, \rangle_H$ is symmetric  yields
$$\begin{array}{rcl}\frac{\partial\nu^h}{\partial t}(t,\psi)
&= & \langle A_h S_h(-t)\psi + P_hF(S_h(-t)\psi) , D\mu^h(t,S_h(-t)\psi)\rangle_H\\ &&  +
\frac{1}{2}\text{Tr}\left[D^2\mu^h(t,S_h(-t)\psi)S_h(-t)S_h(-t)S_h(t)S_h(t)B_h(\psi)^*\right]\\
&& +  \langle D\mu^h(t,S_h(-t)\psi),-A_h S_h(-t)\psi\rangle_H.\\
\end{array}$$
As  $D^2\mu^h(t,\psi)$  is  identified  as linear operator in $V_h$,  using \propref{proposition}(mainly relation \eqref{eql}), 
the definition of  the trace \eqref{trace}, the expression \eqref{iden}  and  the fact that $\langle, \rangle_H$ is symmetric  allow to have
$$\begin{array}{rcl}\frac{\partial\nu^h}{\partial t}(t,\psi)
&=& \langle D\mu^h(t,S_h(-t)\psi)S_h(-t)\psi,S_h(t) P_hF(S_h(-t)\psi)\rangle_H\\
&& +\frac{1}{2}\text{Tr}\left[D^2\nu^h(t,\psi)S_h(t)B_h(\psi)(S_h(t)B_h(\psi))^*\right]\\
&&= \langle D\nu^h(t,\psi),S_h(t)P_hF(S_h(-t)\psi\rangle_H +
\frac{1}{2}\text{Tr}[D^2\nu^h(t,\psi)S_h(t)B_h(\psi)(S_h(t)B_h(\psi))^*].\end{array}.$$
Now let $Z^h(t) = S_h(T-t)X^h(t),$ using \eqref{dmild} and the notation $X_0^h = P_hX_0,$ we get
\begin{eqnarray}
\label{eq:zh}Z^h(t)&=&S_h(T)X_{0}^h+\displaystyle\int_{0}^{t}S_h(T-s) P_{h}F(X^{h}(s))ds \nonumber \\ && +
 \displaystyle\int_{0}^{t}S_h(T-s)B_h(X^{h}(s))d W(s), \,\,t\in[0,T].
\end{eqnarray}
Also let  $\tilde Z^h(t) = S_h(T-t)\tilde X^h(t),$ using \eqref{eq:xtilde}, we get
\begin{equation}\label{eq:ztilde}
\begin{array}{rcl}
\tilde Z^h(t) &= &S_h(T-t_m)X_m^h + (t-t_m)S_h(T-t)S_h^1(t-t_m)P_hF(X_m^h) \\
&& \hskip 2cm + S_h(T-t_m)B_h(X_m^h) (W(t)-W(t_m)),\quad \text{ for } t\in [t_m, t_{m+1}].\end{array}
\end{equation}
which can also be written as
\begin{equation}\label{eq:ztilde2}
\begin{array}{rcl}
\tilde Z^h(t) &= &S_h(T-t_m)X_m^h + \displaystyle\int_{t_m}^{t}S_h(T-s)P_hF(X_m^h)ds \\
&& \hskip 2cm + \displaystyle\int_{t_m}^tS_h(T-t_m)B_h(X_m^h) dW(s),\quad \text{ for } t\in [t_m, t_{m+1}].\end{array}
\end{equation}
Consequently, we obtain
$$\tilde Z^h(T) = \tilde X^h(t) = X_M^h,\quad Z^h(0) = S_h(T)\tilde X^h(0) = S_h(T)X_0^h.$$
Now the weak error is worked out as
$$\begin{array}{rcl}
  \EE[\Phi(X_M^h)] - \EE[\Phi(X^h(T,X_0^h))] &=& \EE[\Phi(\tilde Z^h(T))] - \EE[\mu^h(T,X_0^h)]\\
   &=& \EE[\nu^h(0,\tilde Z^h(T))]-\EE[\nu^h(T,\tilde Z^h(0))]\\
   &=&\sum\limits_{m=0}^{M-1}\EE[\nu^h(T-t_{m+1},\tilde Z^h(t_{m+1}))]-\EE[\nu^h(T-t_{m},\tilde Z^h(t_{m}))].
  \end{array}$$
  Now using It\^o formula (see Lemma~\ref{lemmaIto}) applied to $G (t,x) = \nu^h(T-t,\tilde Z^h(t))$ in the interval $[t_m, t_{m+1}]$ and the fact that the It\^{o} integral vanishes, we can write
  $$\begin{array}{l}
     \EE[\nu^h(T-t_{m+1},\tilde Z^h(t_{m+1}))]-\EE[\nu^h(T-t_{m},\tilde Z^h(t_{m}))]\\
     = \displaystyle-\int_{t_m}^{t_{m+1}}\EE\left[\frac{\partial \nu^h}{\partial t}(T-s,\tilde Z^h(s))\right]ds
     +\displaystyle\int_{t_m}^{t_{m+1}}\EE\left[ D\nu^h(T-s,\tilde Z^h(s))S_h(T-s)P_hF(X_m^h) \right]ds\\
     \hskip 1cm + \frac{1}{2}\EE\displaystyle\int_{t_m}^{t_{m+1}}\text{Tr}\left[D^2\nu^h(T-s,\tilde Z^h(s))\left\{S_h(T-t_m)
     B_h(X_m^h)\right\}\left\{S_h(T-t_m)B_h(X_m^h)\right\}^*\right]ds.
     \end{array}
     $$
    Using the fact that  $D\nu^h(t,\psi)$  is identified to  an element
of $V_h$ (see the analogue representation at \eqref{derivation})  and \eqref{eq:pdenu}, we finally have
     
     $$\begin{array}{l}
     \EE[\nu^h(T-t_{m+1},\tilde Z^h(t_{m+1}))]-\EE[\nu^h(T-t_{m},\tilde Z^h(t_{m}))]\\
     = \displaystyle\int_{t_m}^{t_{m+1}}\EE\left[\langle D\nu^h(T-s,\tilde Z^h(s)),-S_h(T-s)P_hF(S_h(s-T)\tilde Z^h(s))
     +S_h(T-s)P_hF(X_m^h)\rangle_H\right]ds\\
      \hskip 1cm + \frac{1}{2}\EE\Big\{\displaystyle\int_{t_m}^{t_{m+1}}\text{Tr}\Big[D^2\nu^h(T-s,\tilde Z^h(s))\Big(\left\{S_h(T-t_m)
     B_h(X_m^h)\right\}\left\{S_h(T-t_m)B_h(X_m^h)\right\}^*\\
     \hskip 5cm  - S_h(T-s)B_h(X_m^h)(S_h(T-s)B_h(X_m^h))^*\Big)\Big]\,ds\Big\}   \\
     =  \displaystyle\int_{t_m}^{t_{m+1}}\EE\left[\langle D\mu^h(T-s,\tilde X^h(s)),P_hF(X_m^h)-P_hF(\tilde X^h(s))\rangle_H \right]ds\\
     \hskip 1cm + \frac{1}{2}\EE\Big\{\displaystyle\int_{t_m}^{t_{m+1}}\text{Tr}\Big[D^2\mu^h(T-s,\tilde X^h(s))\Big(\left\{S_h(s-t_m)
     B_h(X_m^h)\right\}\left\{S_h(s-t_m)B_h(X_m^h)\right\}^*\\
     \hskip 7cm  - B_h(X_m^h)B_h(X_m^h)^*\Big)\Big]ds.   \\
    \end{array}$$
\end{proof}
    \section{Weak convergence for a SPDE with additive noise}
\label{sec3}
In this section, we consider the additive noise where $B=Q^{1/2}$. In order to prove our  weak error estimate the following weak assumptions
\footnote{This assumption is weak  compared to \assref{assumption1} and \assref{assumption2}} \cite{Wang2014} will be used. 

\begin{Assumption}\label{ass:driftandB}
\textbf{[Assumption on  nonlinear function $F$, and $Q$ ]}
We assume that $F:H\rightarrow H$ is Lipschitz and twice continuously differentiable and satisfies
\begin{eqnarray}\label{asseq1}
 \|F(X)\|&\leq& L(1+\|X\|),\qquad X \in H,\\\
 \|F'(Z)(X)\|&\leq& L\|X\|,\,\qquad\,X,\,Z \in H,\,\,\,\label{aq1}\,\hskip 4cm\\
 \quad \|F''(Z)(X_1,X_2)&\leq& L\|X_1\|\|X_2\|,\quad   Z,\,X,\,X_1,\,X_2 \in H, \label{aq2}\\
 \|(-A)^{-\beta}F''(Z)(X_1,X_2)\|&\leq& L\|X_1\|\|X_2\|,\,\,Z,X_1, X_2\in H,\text{ for some }\beta\in[0,1),\label{asseq2}\\
 \|(-A)^{-\frac{\gamma}{2}}F'(Z)(X)\|&\leq& L(1+\|Z\|_1)\|X\|_{-1},  \label{asseq3} \\
 && \,\,X\in H,\,\,Z\in \mathcal{D}((-A)^{1/2}) \text{for some} \,\,\gamma\in[1,2)\nonumber.
 \end{eqnarray}
Furthermore, we assume that the covariance operator $Q$ satisfies
\begin{equation}
\|(-A)^{\frac{\beta-1}{2}}Q^{\frac{1}{2}}\|_{\mathcal{L}_2(H)}<\infty,\text{ for some }\beta\in (0,1].\label{asseq4}\end{equation}
\end{Assumption}

Note that the semigroup proprieties  in \propref{prop1} are satisfied for the discrete operator $A_h$.  For  our convergence proof, we add  the following propriety to the discrete operator $A_h$.
\begin{proposition} \label{prop:discreteoperators1}
Under  \assref{ass:driftandB} and \assref{assumptionn}, the  following  proprieties are satisfied for discrete operators 
\begin{eqnarray} \label{eq:maincond1}
\|(-A_h)^{\frac{\beta-1}{2}}P_hQ^{\frac{1}{2}}\|_{\mathcal{L}_2 (H)}&<& C + C_\beta h^{1-\beta},\text{ for some }\beta\in (0,1], \label{eq:n1}\\
   \|(-A_h)^{-\beta }P_hF''(Z)(X_1,X_2)\|&\leq&  C(1+h^{2\beta}) \|X_1\|\|X_2\|,\,\,Z,X_1, X_2\in V_h;\,\,\beta \in[0,1),\label{eq:nn3}\\ 
   \|(-A_h)^{-\frac{\gamma}{2}}P_hF'(Z)(X)\|&\leq&  C (h^{\gamma} + (1+\|Z\|_1)\|X\|_{-1}),\,\,Z,X\in V_h;\,\,\gamma\in [1,2),\label{eq:n3}\end{eqnarray}
   \end{proposition}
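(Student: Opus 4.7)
The plan is to derive each of the three discrete bounds by peeling off the continuous counterpart (supplied by \assref{ass:driftandB}) and controlling the discrete-to-continuous discrepancy with standard finite element tools. The two workhorses I intend to rely on are: (i) the uniform comparison $\|(-A_h)^{-s}P_h v\|\leq C\|(-A)^{-s}v\|$ for $v\in H$ and $s\in[0,1]$, which one obtains by realising $A_h^{-1}P_h$ as a Ritz projection composed with $A^{-1}$ and interpolating with the trivial endpoint $s=0$; and (ii) the error bound $\|[(-A_h)^{-s}P_h-(-A)^{-s}]v\|\leq Ch^{2s}\|v\|$ for $v\in H$, $s\in[0,1]$, which follows from the classical $O(h^{2})$ Ritz estimate at $s=1$ combined with operator interpolation on the scale $\mathcal{D}((-A)^{s})$.

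For \eqref{eq:n1}, setting $s=(1-\beta)/2\in[0,1/2]$, I would apply tool (i) to each element $e_i$ of an orthonormal basis of $H$, namely $\|(-A_h)^{(\beta-1)/2}P_h Q^{1/2}e_i\|\leq C\|(-A)^{(\beta-1)/2}Q^{1/2}e_i\|$, then sum squares and invoke \eqref{asseq4} to obtain $\|(-A_h)^{(\beta-1)/2}P_h Q^{1/2}\|_{\mathcal{L}_2(H)}\leq C$, which is dominated by the claimed $C+C_\beta h^{1-\beta}$.

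For \eqref{eq:nn3} and \eqref{eq:n3} the add-and-subtract trick takes over. With $s=\beta$,
\[
(-A_h)^{-\beta}P_h F''(Z)(X_1,X_2)=\bigl[(-A_h)^{-\beta}P_h-(-A)^{-\beta}\bigr]F''(Z)(X_1,X_2)+(-A)^{-\beta}F''(Z)(X_1,X_2).
\]
Tool (ii) together with \eqref{aq2} bounds the first summand by $Ch^{2\beta}\|X_1\|\|X_2\|$, while \eqref{asseq2} bounds the second by $L\|X_1\|\|X_2\|$, yielding \eqref{eq:nn3}. The identical template applied with $s=\gamma/2$ to $F'(Z)(X)$, combined with \eqref{aq1} for the discrepancy term and \eqref{asseq3} for the remainder, yields \eqref{eq:n3}.

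The main obstacle, and essentially the only place where serious care is required, is establishing tool (ii) on the full range $s\in[0,1)$ needed when $\beta$ is close to $1$ in \eqref{eq:nn3} and when $\gamma/2$ is close to $1$ in \eqref{eq:n3}. The endpoint $s=0$ is trivial and $s=1$ is the classical $O(h^{2})$ Ritz projection estimate; the intermediate fractional powers follow by operator interpolation, for which the norm equivalences recorded at the end of \secref{sec2} and the standard regularity assumptions on the triangulation $\mathcal{T}_h$ are the key inputs. A non-self-adjoint $A$ adds a layer of technicality at the interpolation step (Kato-type square root considerations for $s=1/2$), but does not change the outline. Once (ii) is in place, the three stated estimates follow by straightforward bookkeeping.
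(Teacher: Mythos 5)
Your proposal is correct and follows essentially the same route as the paper: all three bounds are obtained by adding and subtracting the continuous operator and invoking the fractional-power finite element error estimate $\|A_h^{-\alpha}P_h-A^{-\alpha}\|_{L(H)}\leq C_\alpha h^{2\alpha}$ together with \eqref{asseq4}, \eqref{aq2}--\eqref{asseq2} and \eqref{aq1}--\eqref{asseq3} (the paper also treats \eqref{eq:n1} by add-and-subtract rather than by your stability bound, which is why the $C_\beta h^{1-\beta}$ term appears in the statement). The one step you flag as the main obstacle --- establishing the $O(h^{2s})$ estimate for fractional $s$ --- is not derived by interpolation in the paper but simply quoted from the finite element literature (Theorem 5.2 of the reference cited as \cite{lions}), so no further work is needed there.
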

 \begin{proof}
   According to (\cite{lions}, Theorem 5.2) for all $\alpha>0,$ the discrete operator $A_h$ and the continuous operator $A$ satisfy
   \begin{equation} \label{eq:lions}
   \|A_h^{-\alpha}P_h-A^{-\alpha}\|_{L(H)}\leq C_\alpha h^{2\alpha},\end{equation}
   where $C_\alpha$ is a constant dependent on $\alpha.$  For $\alpha=-\frac{\beta-1}{2},$ with $\beta\in (0,1]$, we have 
   $$\|(A_h^{-\alpha}P_h-A^{-\alpha})Q^{\frac{1}{2}}\|_{\mathcal{L}_2(H)} = \|A_h^{-\alpha}P_hQ^{\frac{1}{2}}-A^{-\alpha}Q^{\frac{1}{2}}\|_{\mathcal{L}_2(H)}$$
   Now let $\{e_i\}_{i=1}^\infty$ be any orthonormal basis of $H$, since $\|A^{-\alpha}Q^{\frac{1}{2}}\|_{\mathcal{L}_2(H)}<\infty, $ according to \assref{ass:driftandB} (relation \eqref{asseq4}),
   we have using \eqref{eq:lions}
   $$\begin{array}{rcl}
\|A_h^{-\alpha}P_hQ^{\frac{1}{2}}-A^{-\alpha}Q^{\frac{1}{2}}\|_{\mathcal{L}_2(H)} &=&\sum\limits_{i=1}^\infty \|(A_h^{-\alpha}P_h-A^{-\alpha})Q^{\frac{1}{2}}e_i\|\\
   &\leq &  \|A_h^{-\alpha}P_h-A^{-\alpha}\|_{L(H)}\|Q^{\frac{1}{2}}\|_{\mathcal{L}_2(H)}\\
   &\leq & C_\alpha h^{2\alpha}.\end{array}$$
   Now we can write, using \eqref{eq:lions} and \eqref{asseq4}, that
   $$\begin{array}{rcl}\|A_h^{-\alpha}P_hQ^{\frac{1}{2}}\|_{\mathcal{L}_2(H)} &=& \|A_h^{-\alpha}P_hQ^{\frac{1}{2}}-A^{-\alpha}Q^{\frac{1}{2}}+A^{-\alpha}Q^{\frac{1}{2}}\|_{\mathcal{L}_2(H)}\\
   &\leq & \|A_h^{-\alpha}P_hQ^{\frac{1}{2}}-A^{-\alpha}Q^{\frac{1}{2}}\|_{\mathcal{L}_2(H)} + \|A^{-\alpha}Q^{\frac{1}{2}}\|_{\mathcal{L}_2(H)}\\
   &\leq & C + C_\alpha h^{2\alpha}\\
   &=&C + C_\alpha h^{1-\beta}.\end{array}$$
   This proves \eqref{eq:n1}.
   Now to prove \eqref{eq:nn3}, we proceed as above and the fact that \eqref{asseq2} is satisfied.
   Indeed   for $\beta \in[0,1)$,  using \eqref{aq2}, we have
   $$\|(A_h^{-\beta}P_h-A^{-\beta})F''(Z)(X_1,X_2)\| \leq 
   \|A_h^{-\alpha}P_h-A^{-\beta}\|_{L(H)}\| \|F''(Z)(X_1,X_2) \|\leq CL \, h^{2\beta}\,\|X_1\|\|X_2\|.$$
   Therefore,
   $$\begin{array}{rcl}
      \|A_h^{-\beta}P_hF''(Z)(X_1,X_2) \| &\leq& \|(A_h^{-\beta}P_h-A^{-\beta})F''(Z)(X_1,X_2)\|+\|A^{-\beta}F''(Z)(X_1,X_2) \|\\  &\leq& C(1+h^{2\beta}) \|X_1\|\|X_2\|.
     \end{array}$$
     The proof of \eqref{eq:n3} is done just like for \eqref{eq:nn3} using \eqref{asseq3} and \eqref{aq1}. 
     \end{proof} \\
The estimates in Proposition~\ref{prop:discreteoperators1} are very tight and can influence  the order  of convergence in space and time  when  $\beta$ in \eqref{asseq4} is small.
Indeed  using the fact that 
\begin{equation}
 h < C({\rm meas} (\Omega))^{\frac{1}{d}},
\end{equation}
where ${\rm meas} (\Omega)$ is the either the length, the area or the volume of the domain $\Omega$ and $d$ is the dimension,   we have  the following corollary.
\begin{corollary}\label{estimatesAhb}
 Under  \assref{ass:driftandB} 
  the following discrete proprieties are satisfied 
\begin{eqnarray} \label{eq:maincond}
\|(-A_h)^{\frac{\beta-1}{2}}P_hQ^{\frac{1}{2}}\|_{\mathcal{L}_2(H)}&<& C,\,\, \quad \quad \beta\in (0,1].\\
    \|(-A_h)^{-\beta}P_hF''(Z)(X_1,X_2)\|&\leq&  C \|X_1\|\|X_2\|,\,\,Z,X_1, X_2\in V_h;\,\,\beta\in[0,1),\label{eq:3}\\ 
   \|(-A_h)^{-\frac{\gamma}{2}}P_hF'(Z)(X)\|&\leq&  C (1+\|Z\|_1)\|X\|_{-1},\,\,Z, X\in V_h;\,\,\gamma \in[1,2),\label{eq:9}\end{eqnarray}
   where  the positive constant $C$ is independent of $h$.
 \end{corollary}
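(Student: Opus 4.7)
The plan is essentially to invoke \propref{prop:discreteoperators1} verbatim and then absorb every $h$-dependent factor into an $h$-independent constant using the stated bound $h < C (\mathrm{meas}(\Omega))^{1/d}$. So at the strategic level, this corollary is just an observation that once the mesh size is uniformly bounded by a geometric quantity of the fixed domain $\Omega$, all of the ``$1+h^{2\beta}$'' or ``$C+C_\beta h^{1-\beta}$'' factors can be replaced by a single constant $C$ independent of $h$.

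More concretely, I would proceed estimate by estimate. For \eqref{eq:maincond}, start from \eqref{eq:n1} which gives
\[
\|(-A_h)^{(\beta-1)/2}P_h Q^{1/2}\|_{\mathcal{L}_2(H)} \leq C + C_\beta h^{1-\beta},
\]
and since $\beta\in(0,1]$ implies $1-\beta\in[0,1)$, the bound $h^{1-\beta}\leq \bigl(C(\mathrm{meas}(\Omega))^{1/d}\bigr)^{1-\beta}$ gives a constant depending only on $\beta$, $d$ and $\Omega$, which is then absorbed into $C$. For \eqref{eq:3}, start from \eqref{eq:nn3}; the factor $1+h^{2\beta}$ is bounded by a constant uniformly in $h$ (for $\beta\in[0,1)$), hence the estimate $C(1+h^{2\beta})\|X_1\|\|X_2\|\leq C'\|X_1\|\|X_2\|$ follows directly. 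For \eqref{eq:9}, start from \eqref{eq:n3}; since $\gamma\in[1,2)$, the term $h^\gamma$ is uniformly bounded, and recycling the proof template of \eqref{eq:nn3} for \eqref{eq:n3} (which is how the authors derive \eqref{eq:n3} in the first place) actually yields a factorised estimate of the form $C(1+h^\gamma)(1+\|Z\|_1)\|X\|_{-1}$, which bounds cleanly by $C'(1+\|Z\|_1)\|X\|_{-1}$ after using $h\leq C(\mathrm{meas}(\Omega))^{1/d}$.

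There is essentially no conceptual obstacle here: the corollary is a book-keeping consequence of \propref{prop:discreteoperators1} together with the geometric constraint on $h$. The one mild care-point is to make sure the constants $C_\alpha$ coming from the discrete–continuous error bound \eqref{eq:lions} are treated as $h$-independent (they depend only on $\alpha$), so that their product with $h^{1-\beta}$ or $h^{2\beta}$ or $h^\gamma$ gives an $h$-independent constant. I would conclude by remarking that this uniformity is exactly what is needed downstream in \secref{sec3}–\secref{sec5}, since the constants in the weak-convergence estimates must not blow up as $h \to 0$.
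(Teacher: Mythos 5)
Your proposal is correct and coincides with the paper's own (essentially one-line) justification: the corollary follows from \propref{prop:discreteoperators1} by absorbing the factors $h^{1-\beta}$, $h^{2\beta}$ and $h^{\gamma}$ into the constant via the uniform bound $h<C(\mathrm{meas}(\Omega))^{1/d}$. Your side remark on \eqref{eq:9} --- that the additive $h^{\gamma}$ term in \eqref{eq:n3} must in fact come multiplied by a factor controlled by $(1+\|Z\|_1)\|X\|_{-1}$ for the absorption to yield the stated product form --- identifies a detail the paper glosses over, and is worth retaining.
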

     The following lemma will be helpful for the proofs of convergence.
\begin{lemma}
\label{lem}
Assume that all the conditions above are fulfilled and that $\Phi\in \mathcal{C}_2^b(H;\mathbb{R}).$ For
$\gamma\in[0,1],\,\gamma_1,\gamma_2\in[0,1)$ satisfying $\gamma_1 + \gamma_2<1,$ there exists constants $c_\gamma$ and $c_{\gamma_1,\gamma_2}$ such that
\begin{eqnarray}\label{eq42}
\|(-A_h)^{\gamma}D\mu^h(t,\psi)\|&\leq &c_{\gamma}t^{-\gamma},\\
\label{eq43}\|(-A_h)^{\gamma_2}D^2\mu^h(t,\psi)(-A_h)^{\gamma_1}\|_{L(H)}&\leq &c_{\gamma_1,\gamma_2}(t^{-(\gamma_1+\gamma_2)}+1),
\end{eqnarray}
where $\mu^h(t,\psi)$ is defined by \eqref{eq:mu}, and $\psi \in V_h$.

\begin{proof}
 The proof is the same as in \cite{Wang2014}. Note that although the linear operator  in \cite{Wang2014} is assumed to be self adjoint, the proof don't make use of that.
\end{proof}

\end{lemma}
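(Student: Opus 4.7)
The plan is to express $D\mu^h(t,\psi)$ and $D^2\mu^h(t,\psi)$ through the chain rule in terms of the Fr\'echet derivatives of $X^h(t,\psi)$ with respect to the initial datum, then combine analytic semigroup smoothing with a Gronwall argument in the spirit of \cite{Wang2014}. Differentiating the mild formulation \eqref{dmild} in $\psi$ shows that $\eta_t := DX^h(t,\psi)\phi$ satisfies
$$\eta_t = S_h(t)\phi + \int_0^t S_h(t-s) P_h F'(X^h(s))\eta_s\,ds,$$
so that $\Vert \eta_t\Vert \leq C\Vert\phi\Vert$ by \eqref{aq1} and ordinary Gronwall. The chain rule then gives $D\mu^h(t,\psi)\phi = \EE[\Phi'(X^h(t,\psi))\eta_t]$ and an analogous second-derivative representation involving $\zeta_t := D^2X^h(t,\psi)(\phi_1,\phi_2)$.

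For \eqref{eq42}, Riesz identification and duality yield $\Vert(-A_h)^\gamma D\mu^h(t,\psi)\Vert = \sup_{\Vert\phi\Vert=1}\bigl|\EE[\Phi'(X^h(t,\psi))\,DX^h(t,\psi)(-A_h^*)^\gamma\phi]\bigr|$. Writing the mild equation for $\tilde\eta_t := DX^h(t,\psi)(-A_h^*)^\gamma \phi$ and using
$$\Vert S_h(t)(-A_h^*)^\gamma\Vert_{L(H)} = \Vert(-A_h)^\gamma S_h(t)^*\Vert_{L(H)} \leq C t^{-\gamma}$$
(the analytic smoothing bound of the adjoint semigroup $e^{tA_h^*}$ combined with a duality transpose), a Gronwall argument delivers $\Vert\tilde\eta_t\Vert\leq C t^{-\gamma}\Vert\phi\Vert$, and \eqref{eq42} follows from the boundedness of $\Phi'$.

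For \eqref{eq43}, differentiating the mild formulation twice yields the integral equation
$$\zeta_t = \int_0^t S_h(t-s)P_h\bigl[F''(X^h(s))(DX^h(s)\phi_1, DX^h(s)\phi_2) + F'(X^h(s))\zeta_s\bigr]\,ds,$$
so that $D^2\mu^h(t,\psi)(\phi_1,\phi_2) = \EE[\Phi''(X^h(t,\psi))(DX^h(t,\psi)\phi_1, DX^h(t,\psi)\phi_2)] + \EE[\Phi'(X^h(t,\psi))\zeta_t]$. Substituting $\phi_1 = (-A_h)^{\gamma_1}\tilde\phi_1$ and $\phi_2 = (-A_h^*)^{\gamma_2}\tilde\phi_2$, one controls the first expectation via $\Vert DX^h(t,\psi)(-A_h)^{\gamma_1}\tilde\phi_1\Vert\leq Ct^{-\gamma_1}\Vert\tilde\phi_1\Vert$ (since $(-A_h)^{\gamma_1}$ commutes with $S_h(t)$) and the bound from \eqref{eq42} applied to the second factor, yielding the $t^{-(\gamma_1+\gamma_2)}$ contribution. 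For the $\zeta_t$--term, \eqref{aq2} and \eqref{aq1} give a pointwise bound of order $s^{-\gamma_1-\gamma_2}$, and the resulting time convolution $\int_0^t s^{-\gamma_1-\gamma_2}ds$ converges precisely because $\gamma_1+\gamma_2<1$, contributing the bounded ``$+1$'' summand; the refined estimates \eqref{eq:nn3}--\eqref{eq:n3} of \propref{prop:discreteoperators1} (equivalently \coref{estimatesAhb}) may be invoked for sharper versions but are not strictly needed here.

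The main technical obstacle, and the reason one cannot simply quote \cite{Wang2014} verbatim, is the non-commutativity of $(-A_h)$ with $S_h(t)$ in the non-self-adjoint setting: the move $(-A_h)^\gamma S_h(t) = S_h(t)(-A_h)^\gamma$ available in \cite{Wang2014} must be replaced by the two-sided smoothing identity $\Vert S_h(t)(-A_h^*)^\gamma\Vert_{L(H)}= \Vert(-A_h)^\gamma S_h(t)^*\Vert_{L(H)}$, and one needs the $h$-uniform analyticity of $e^{tA_h^*}$, which holds because the transposed bilinear form satisfies the same G\aa{}rding inequality \eqref{ellip} as $a(\cdot,\cdot)$. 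With this two-sided smoothing secured, the remainder of the argument mirrors \cite{Wang2014} line by line.
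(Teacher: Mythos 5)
Your skeleton is the same one the paper relies on: the paper offers no argument of its own but defers wholesale to \cite{Wang2014}, and the chain-rule/variational-equation/Gronwall/smoothing reconstruction you give is precisely that argument, so on the self-adjoint core the two routes coincide. The genuinely new content in your write-up is that you take seriously the non-self-adjointness that the paper waves away with the remark that the proof of \cite{Wang2014} ``don't make use of that''; identifying $\Vert S_h(t)(-A_h^*)^{\gamma}\Vert_{L(H)}$ as the quantity that must replace the commutation $(-A_h)^{\gamma}S_h(t)=S_h(t)(-A_h)^{\gamma}$ is exactly right.

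However, the patch you propose for that step does not close. Taking adjoints gives $\Vert S_h(t)(-A_h^*)^{\gamma}\Vert_{L(H)}=\Vert(-A_h)^{\gamma}e^{tA_h^*}\Vert_{L(H)}$ (since $((-A_h^*)^{\gamma})^*=(-A_h)^{\gamma}$ and $S_h(t)^*=e^{tA_h^*}$), and this is a \emph{mixed} quantity: fractional powers of $A_h$ acting on the semigroup generated by $A_h^*$. The $h$-uniform analyticity of $e^{tA_h^*}$, which does follow from the G\aa{}rding inequality for the transposed form, yields $\Vert(-A_h^*)^{\gamma}e^{tA_h^*}\Vert_{L(H)}\le Ct^{-\gamma}$ — not the bound you need. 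To pass from one to the other you must control $\Vert(-A_h)^{\gamma}(-A_h^*)^{-\gamma}\Vert_{L(H)}$ uniformly in $h$, i.e.\ you need the $h$-uniform embedding $\mathcal{D}((-A_h^*)^{\gamma})\hookrightarrow\mathcal{D}((-A_h)^{\gamma})$. This is a discrete Kato-square-root-type property that does \emph{not} follow from coercivity and boundedness of the bilinear form alone (McIntosh's counterexamples show the abstract form setting is insufficient), and it is precisely the kind of extra hypothesis on $A_h^*$ that the paper is forced to impose elsewhere (see the assumption on $A_h^*$ in \thmref{thm:mainadditive}); it should either be added as a hypothesis here or obtained by restricting to exponents where the form structure identifies the domains. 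A secondary, smaller point: at the endpoint $\gamma=1$ your plain Gronwall step fails because the resulting majorant $\int_0^t s^{-1}\,ds$ diverges; one must either bootstrap through intermediate fractional powers or state \eqref{eq42} for $\gamma\in[0,1)$ only, which is all that the error estimates \eqref{eq:estimate2}--\eqref{eq:estimate3} actually use.
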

The following results can be proven exactly along the line of \cite[Lemma 3.4 and Lemma 3.5]{Wang2014}).
\begin{lemma}
\label{lem:boundonxmh}
Suppose \assref{assumption1} is fulfilled and $X_0\in \mathcal{D}((-A)^{1/2}),$ then it holds for $\gamma \in [0,\frac{\beta}{2})$ and arbitrary small $\epsilon$ that
\begin{equation}\label{eq44}
\underset{0\leq m \leq M}{\sup} \|(-A_h)^{\gamma}X_m^h\|_{L_2(\mathbb{D}, H)} \leq C \text{ and }
\|\tilde X^h(t)-X^h_m\|_{L_2(\mathbb{D}, H)} \leq C\Delta t^{\frac{\beta-\epsilon}{2}},
\end{equation}
where $X_m^h$ is defined by \eqref{new} and $\tilde X^h$ is given in \eqref{eq:xtilde}.
Furthermore 
\begin{equation}\label{eqbound} \|(-A_h)^{\frac{1}{2}}X_m^h\|_{L_2(\mathbb{D}, H)}\leq C(1+\Delta t^{\frac{\beta-1-\epsilon}{2}}), \end{equation}
for $\beta \in (0,1]$.
\end{lemma}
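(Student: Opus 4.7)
The plan is to iterate the scheme \eqref{new} in order to write $X_m^h$ as a discrete mild solution
\[
X_m^h = S_h(t_m) P_h X_0 + \sum_{k=0}^{m-1} S_h(t_m-t_k) A_h^{-1}\bigl(S_h(\Delta t)-I\bigr)P_h F(X_k^h) + \sum_{k=0}^{m-1} S_h(t_m-t_k) P_h Q^{1/2}\,\Delta W_k,
\]
and then to apply $(-A_h)^{\gamma}$ to each term and bound them separately. For the initial term I would use the $H$-stability of $P_h$ together with $\|(-A_h)^{\gamma}P_h v\| \le C\|(-A)^{1/2}v\|$ for $\gamma\le 1/2$, which handles the initial data since $X_0\in\mathcal{D}((-A)^{1/2})$. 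For the drift term, I would use the smoothing identity $A_h^{-1}(S_h(\Delta t)-I)=\int_0^{\Delta t}S_h(s)\,ds$ to get $\|(-A_h)^{\gamma}A_h^{-1}(S_h(\Delta t)-I)\|_{L(H)}\le C\Delta t^{1-\gamma}$, combine with \propref{prop1} on $S_h(t_m-t_k)$, invoke the linear growth of $F$ via \assref{ass:driftandB}, and close the recursion by a discrete Gr\"onwall argument in the $L_2(\mathbb{D},H)$ norm.

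The heart of the matter is the stochastic term. By It\^o's isometry and the independence of the Brownian increments,
\[
\bigl\|(-A_h)^{\gamma}\sum_{k=0}^{m-1} S_h(t_m-t_k) P_h Q^{1/2}\Delta W_k\bigr\|_{L_2(\mathbb{D},H)}^2 = \sum_{k=0}^{m-1}\int_{t_k}^{t_{k+1}} \bigl\|(-A_h)^{\gamma} S_h(t_m-t_k)P_h Q^{1/2}\bigr\|_{\mathcal{L}_2(H)}^2 ds.
\]
I would then factor $(-A_h)^{\gamma}S_h(t_m-t_k)P_hQ^{1/2}=\bigl[(-A_h)^{\gamma+\frac{1-\beta}{2}}S_h(t_m-t_k)\bigr]\bigl[(-A_h)^{\frac{\beta-1}{2}}P_hQ^{1/2}\bigr]$, controlling the first factor by \propref{prop1} and the second by \coref{estimatesAhb} (relation \eqref{eq:maincond}). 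The resulting sum $\sum_k \Delta t\,(t_m-t_k)^{-(2\gamma+1-\beta)}$ is uniformly bounded in $m$ exactly when $2\gamma+1-\beta<1$, i.e.\ $\gamma<\beta/2$, which yields the first estimate. For the boundary case $\gamma=1/2$ the same sum blows up like $\Delta t^{\beta-1}$ from the dominant term near $k=m-1$, and this precisely produces the $\Delta t^{(\beta-1-\epsilon)/2}$ bound claimed in \eqref{eqbound}, the $\epsilon$ absorbing logarithmic factors coming from the boundary behaviour $\gamma\uparrow\beta/2$ in the Gr\"onwall step applied to $\|(-A_h)^{1/2} X_m^h\|$.

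For the estimate on $\|\tilde X^h(t)-X_m^h\|_{L_2(\mathbb{D},H)}$, I would use the explicit form \eqref{eq:xtilde} and split
\[
\tilde X^h(t)-X_m^h = \bigl(S_h(t-t_m)-I\bigr)X_m^h + (t-t_m)S_h^1(t-t_m)P_h F(X_m^h) + S_h(t-t_m)B_h(X_m^h)(W(t)-W(t_m)).
\]
For the first term I would write $(S_h(t-t_m)-I)X_m^h=(-A_h)^{-\gamma}(S_h(t-t_m)-I)(-A_h)^{\gamma}X_m^h$, bound by $C(t-t_m)^{\gamma}\|(-A_h)^{\gamma}X_m^h\|$ with $\gamma=\beta/2-\epsilon/2$, and invoke the first bound already proved. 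The drift piece is of order $\Delta t$ by the linear growth of $F$ and boundedness of $S_h^1$. The noise piece is controlled by It\^o's isometry combined with \eqref{eq:maincond} exactly as above, yielding order $\Delta t^{\beta/2}$. Collecting all three contributions gives the claimed $\Delta t^{(\beta-\epsilon)/2}$ rate.

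The main obstacle I anticipate is the endpoint case $\gamma=1/2$ used to derive \eqref{eqbound}: here the kernel $(t_m-t_k)^{-(2-\beta)}$ is not integrable and one must carefully track where the $\Delta t^{\beta-1-\epsilon}$ singularity actually appears, making sure the Gr\"onwall iteration does not amplify it and that condition \eqref{eq:maincond} is tight enough to keep constants uniform in $h$. The remaining steps are essentially bookkeeping, parallel to \cite[Lemma~3.4, Lemma~3.5]{Wang2014}, with the self-adjoint hypothesis replaced by the analytic semigroup smoothing used throughout the paper.
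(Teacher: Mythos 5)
Your proposal is correct and follows essentially the same route as the paper: iterate the scheme into its discrete mild form, split into initial, drift and stochastic parts, control the stochastic convolution via It\^o's isometry, the factorization through $(-A_h)^{\frac{\beta-1}{2}}P_hQ^{\frac{1}{2}}$ and semigroup smoothing, and handle the drift with the identity $A_h^{-1}(S_h(\Delta t)-I)=\int_0^{\Delta t}S_h(s)\,ds$ plus linear growth of $F$ (the paper defers the initial and stochastic terms to \cite[Lemma 3.5]{Wang2014} and only writes out the drift term). The only cosmetic difference is that the Gr\"onwall step is needed only at the level of $\sup_m\|X_m^h\|_{L_2(\mathbb{D},H)}$, since the drift term requires merely $\|F(X_k^h)\|$ in the plain $H$-norm, so the endpoint singularity at $\gamma=\tfrac12$ enters only through the stochastic term and is not amplified by the iteration.
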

\begin{proof}
Before the proof of  the lemma, we can prove exactly as in \cite[(3.3)]{Wang2014} that
$$
\|X_{k}^{h}\|_{L_2(\mathbb{D}, H)} <\infty, \,\,\qquad 0 \leq k\leq  M,\,\, \underset{\qquad 0 \leq k\leq  M} {\sup} \| P_h F(X_{k}^{h})\|_{L_2(\mathbb{D}, H)} <\infty. 
$$
We concentrate on proving \eqref{eqbound}, the proof of the other assertion can be done just as in \cite{Wang2014}. Recall that
\begin{eqnarray}
 X_{m}^{h}&=&e^{\Delta t A_{h}}X_{m-1}^{h}+A_{h}^{-1}\left( e^{\Delta t A_{h}}-I\right)P_{h}F(X_{m-1}^{h})+e^{\Delta t A_{h}}B_h\left( W_{t_{m}}-W_{t_{m}-1}\right),\\
 B_h&=&P_h Q^{1/2}P_h.
\end{eqnarray}
This can also be written as
\begin{eqnarray}
 X_{m}^{h}=S_h(\Delta t)X_{m-1}^{h}+\displaystyle\int_0^{\Delta t} S_h(\Delta t-s)P_{h}F(X_{m-1}^{h})ds+S_h(\Delta t)B_h\left( W_{t_{m}}-W_{t_{m}-1}\right).
\end{eqnarray}
Iterating gives
\begin{eqnarray}
 X_{m}^{h}=S_h(t_m)X_{0}^{h}+\sum_{k=0}^{m-1}\displaystyle\int_{t_k}^{t_{k+1}} S_h(t_m-s)P_{h}F(X_{k}^{h})ds+\sum_{k=0}^{m-1}S_h(t_m-t_k)B_{h}\Delta W_k.
\end{eqnarray}
Now
$$\begin{array}{rcl}\|(-A_h)^{\frac{1}{2}}X_m^h\|_{L_2(\mathbb{D}, H)}&\leq&
\|(-A_h)^{\frac{1}{2}}S_h(t_m)X_{0}^{h}\|+\sum\limits_{k=0}^{m-1}\displaystyle\int_{t_k}^{t_{k+1}}\|(-A_h)^{\frac{1}{2}} S_h(t_m-s)P_{h}F(X_{k}^{h})\|_{L_2(\mathbb{D},H)}ds \\ && +\left(\Delta t\sum\limits_{k=0}^{m-1}\|(-A_h)^{\frac{1}{2}}S_h(t_m-t_k)B_{h}\|_{\mathcal{L}_2(H)}^2\right)^{\frac{1}{2}}\\
= I_0 + I_1 + I_2.\end{array}$$
We can prove exactly as in (\cite{Wang2014}, Lemma 3.5) that
\begin{equation} \label{eq:resultwang}I_2\leq C\Delta t^{\frac{\beta -1-\epsilon}{2} }\text{ and } I_0\leq \|X_0\|_1<\infty.\end{equation}
Now it remain to prove that $I_1$ is bounded above.
We have
$$\begin{array}{rcl}I_1 &\leq& \sum\limits_{k=0}^{m-1}\displaystyle\int_{t_k}^{t_{k+1}}\|(-A_h)^{\frac{1}{2}} S_h(t_m-s)P_{h}F(X_{k}^{h})ds\|_{L_2(\mathbb{D}, H)}\\
&\leq & C\sum\limits_{k=0}^{m-1}\displaystyle\int_{t_k}^{t_{k+1}}(t_m-s)^{-\frac{1}{2}}(1+\|X_{k}^{h}\|_{L_2(\mathbb{D}, H)})ds\\ & \leq &  
C \left(\sum\limits_{k=0}^{m-1} \displaystyle\int_{t_k}^{t_{k+1}}(t_m-s)^{-\frac{1}{2}}ds\right)\left( \underset{0 \leq k\leq m-1}{\sup}\left(1+\|X_{k}^{h}\|_{L_2(\mathbb{D}, H)}\right) \right)\\
 &=&C \left(\displaystyle \int_{0}^{t_{m}}(t_m-s)^{-\frac{1}{2}}ds\right)\left( \underset{0 \leq k\leq m-1}{\sup}\left(1+\|X_{k}^{h}\|_{L_2(\mathbb{D}, H)}\right) \right)\\
 &\leq& C  T^{1/2} \left( \underset{0 \leq k\leq m-1}{\sup}\left(1+\|X_{k}^{h}\|_{L_2(\mathbb{D}, H)}\right) \right) < \infty.
 \end{array}$$

Combining with \eqref{eq:resultwang} establishes the lemma.\end{proof}\\

Now we can prove the  convergence result for the semi disrete problem  with non-self-adjoint operator $A$ as the time step goes to zero.
\begin{theorem}\label{thm:mainadditive}
Assume that \assref{assumption1} is fulfilled $X_0\in \mathcal{D}((-A)^{1/2})$  and that $\Phi\in \mathcal{C}_2^b(H;\mathbb{R}).$ Then for arbitrary small $\epsilon>0,$
if the adjoint of  the discrete operator $A_h^*$ satisfies
the analogue inequality as $A_h$ in \coref{estimatesAhb}, we have
\begin{equation} |\EE[\Phi(X_M^h)] - \EE[\Phi(X^h(T))]|\leq C\Delta t^{\beta -\epsilon},\end{equation}
where the constant $C$ depends on $\beta, \,\epsilon,\, ...,T,\,L$ and the initial data, but is independent of $h$ and $M.$
\end{theorem}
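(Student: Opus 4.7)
My plan is to start from the weak error representation formula proved just before the theorem and split the sum into a drift piece
\[
I_1 \;=\; \sum_{m=0}^{M-1}\int_{t_m}^{t_{m+1}}\EE\bigl[\langle D\mu^h(T-s,\tilde X^h(s)),\,P_hF(X_m^h)-P_hF(\tilde X^h(s))\rangle_H\bigr]\,ds,
\]
and a diffusion piece $I_2$ involving the trace term. Since the noise is additive, $B_h=P_hQ^{1/2}P_h$ is independent of $X$, which kills the argument-dependence in $I_2$ and is what lets us avoid Malliavin calculus. Throughout the whole argument I will combine the smoothing bounds in \lemref{lem} for $D\mu^h$ and $D^2\mu^h$ with the discrete regularity estimates of \coref{estimatesAhb} and the moment bounds of \lemref{lem:boundonxmh}. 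The target rate $\Delta t^{\beta-\epsilon}$ will be seen, essentially everywhere, to come from the pairing $(s-t_m)^{-\gamma}\cdot(s-t_m)^{\gamma+\beta/2-\epsilon}$ together with the fact that $\sum_m\int_{t_m}^{t_{m+1}}(T-s)^{-\gamma}\,ds$ is summable for $\gamma<1$.

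For $I_2$ I would write
\[
\{S_h(s-t_m)B_h\}\{S_h(s-t_m)B_h\}^{\!*}-B_hB_h^{\!*}= [S_h(s-t_m)-I]B_hB_h^{\!*}S_h(s-t_m)^{\!*}+ B_hB_h^{\!*}[S_h(s-t_m)^{\!*}-I],
\]
and estimate the trace using $|\text{Tr}(L_1L_2)|\le\|L_1\|_{\mathcal{L}_2}\|L_2\|_{\mathcal{L}_2}$. Inserting $(-A_h)^{\pm(\beta-1)/2}$ and $(-A_h)^{\mp\epsilon/2}$ symmetrically, the smoothing of $S_h(s-t_m)-I$ delivers $(s-t_m)^{(1-\beta)/2-\epsilon/2}$, the Hilbert--Schmidt bound $\|(-A_h)^{(\beta-1)/2}P_hQ^{1/2}\|_{\mathcal{L}_2}\le C$ from \coref{estimatesAhb} absorbs the ill-conditioning of $B_h$, and \lemref{lem} bounds the adjoint-acted $D^2\mu^h$; this is precisely where the hypothesis on $A_h^{*}$ satisfying the analogue of \coref{estimatesAhb} enters, since after using $(-A_h^{*})^{-\gamma}D^2\mu^h(T-s,\cdot)(-A_h)^{-\gamma}$ we need to move the dual-side fractional power onto $Q^{1/2}$.

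For $I_1$ I would Taylor-expand
\[
P_hF(\tilde X^h(s))-P_hF(X_m^h)=P_hF'(X_m^h)\bigl(\tilde X^h(s)-X_m^h\bigr)+R_m(s),
\]
where $R_m(s)=\int_0^1(1-\tau)P_hF''(\,\cdot\,)(\tilde X^h(s)-X_m^h,\tilde X^h(s)-X_m^h)\,d\tau$. The remainder $R_m$ is bounded by pairing $(-A_h)^{\beta}D\mu^h$ from \lemref{lem} with $(-A_h)^{-\beta}P_hF''$ from \coref{estimatesAhb}; combined with $\|\tilde X^h(s)-X_m^h\|_{L_2(\mathbb{D},H)}\le C\Delta t^{(\beta-\epsilon)/2}$ of \lemref{lem:boundonxmh} this gives the prescribed $\Delta t^{\beta-\epsilon}$. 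For the first-order term I split
\[
\tilde X^h(s)-X_m^h=\Delta_{\mathrm{det}}+\Delta_{\mathrm{sto}},\qquad \Delta_{\mathrm{sto}}=S_h(s-t_m)B_h(W(s)-W(t_m)).
\]
The deterministic part is controlled by the semigroup smoothing $\|(S_h(s-t_m)-I)X_m^h\|\le C(s-t_m)^{1/2}\|X_m^h\|_{1}$ together with the regularity estimate \eqref{eqbound}, and by placing a fractional power in front of $F'$ via \coref{estimatesAhb} \eqref{eq:9}.

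The main obstacle is the stochastic piece $\EE\langle D\mu^h(T-s,\tilde X^h(s)),P_hF'(X_m^h)\Delta_{\mathrm{sto}}\rangle$, because $\tilde X^h(s)$ itself contains $\Delta_{\mathrm{sto}}$, so a naive conditional-expectation argument does not kill the linear factor. I plan to Taylor-expand $D\mu^h$ in its second argument around $X_m^h+\Delta_{\mathrm{det}}$:
\[
D\mu^h(T-s,\tilde X^h(s))=D\mu^h(T-s,X_m^h+\Delta_{\mathrm{det}})+\int_0^1 D^2\mu^h(T-s,X_m^h+\Delta_{\mathrm{det}}+\tau\Delta_{\mathrm{sto}})\Delta_{\mathrm{sto}}\,d\tau.
\]
Conditioning on $\mathcal{F}_{t_m}$ makes the first term vanish because $\EE[\Delta_{\mathrm{sto}}\mid \mathcal{F}_{t_m}]=0$ and the $\mathcal{F}_{t_m}$-measurable prefactor $P_hF'(X_m^h)$ factors out. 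The remaining second-order term is then handled like a trace: introducing the basis of $Q^{1/2}(H)$ and using Itô isometry for the inner Wiener increments, one is left with
\[
\int_0^1 \EE\,\text{Tr}\bigl[(-A_h^{*})^{-\gamma/2}P_hF'(X_m^h)^{*}(-A_h)^{\gamma_2}\cdot(-A_h)^{\gamma_1}D^2\mu^h(\cdot)(-A_h)^{\gamma_2}\cdot S_h(s-t_m)B_hB_h^{*}S_h(s-t_m)^{*}\bigr]\,d\tau,
\]
and a careful choice $\gamma=\gamma_1+\gamma_2$ with $\gamma_1+\gamma_2<1$ balances the singularities $(T-s)^{-(\gamma_1+\gamma_2)}$, the $(s-t_m)$-powers extracted from $S_h(s-t_m)B_h$ in Hilbert--Schmidt norm, and the factor $(1+\Delta t^{(\beta-1-\epsilon)/2})$ coming from $\|X_m^h\|_1$ in \eqref{eqbound}. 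Summing in $m$ and using the summability of weak singularities yields the desired bound $C\Delta t^{\beta-\epsilon}$; the constants absorbed are independent of $h$ because every norm inequality used for $A_h,A_h^{*}$ and $P_hQ^{1/2}$ is $h$-uniform.
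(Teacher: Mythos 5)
Your proposal follows the same skeleton as the paper's proof: the weak error representation, the split into a drift sum $b_m^1$ and a trace sum $b_m^2$, a second‑order Taylor expansion of $F$ whose remainder is paired as $(-A_h)^{\delta}D\mu^h$ from \lemref{lem} against $(-A_h^*)^{-\delta}P_hF''$ from \coref{estimatesAhb} and closed with $\|\tilde X^h(s)-X_m^h\|_{L_2(\mathbb{D},H)}\leq C\Delta t^{(\beta-\epsilon)/2}$ from \lemref{lem:boundonxmh}, and the splitting of $\tilde X^h(s)-X_m^h$ into deterministic and stochastic increments. The one place where you genuinely diverge is the stochastic contribution to the first‑order drift term: the paper disposes of $\EE\big[\langle D\mu^h(T-s,\tilde X^h(s)),P_hF'(X_m^h)S_h(s-t_m)B_h(W(s)-W(t_m))\rangle_H\big]$ with the single remark that ``the expectation of the Brownian motion vanishes,'' whereas you correctly observe that $\tilde X^h(s)$ itself contains the increment $W(s)-W(t_m)$, so the two factors are correlated and conditioning on $\mathcal{F}_{t_m}$ does not kill the term (already for $\mu^h(t,\psi)=\|\psi\|^2$ the conditional expectation equals a nonzero trace). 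Your additional Taylor expansion of $D\mu^h$ around $X_m^h+\Delta_{\mathrm{det}}$ — annihilating the leading term by conditioning and converting the remainder into a trace against $S_h(s-t_m)B_hB_h^*S_h(s-t_m)^*$ controlled by \lemref{lem} and by $\|(-A_h)^{(\beta-1)/2}P_hQ^{1/2}\|_{\mathcal{L}_2(H)}\leq C$ — is precisely the device of the reference the paper follows, and it supplies the justification the paper's own argument skips while still landing on $O(\Delta t^{\beta-\epsilon})$; so your route is strictly more complete here. Two bookkeeping points to watch in a full write‑up: for the deterministic part of $J_m^1$ the crude bound $\|(S_h(s-t_m)-I)X_m^h\|\leq C(s-t_m)^{1/2}\|X_m^h\|_1$ combined with \eqref{eqbound} only yields $\Delta t^{\beta/2}$, so you must, as the paper does, exploit the $\|\cdot\|_{-1}$ norm in \eqref{eq:9} to gain the extra factor $(s-t_m)^{(1+\beta-\epsilon)/2}$ rather than $(s-t_m)^{1/2}$; and in the trace term the whole power $(s-t_m)^{\beta-\epsilon}$ should be extracted from a single factor $(-A_h)^{-\beta+\epsilon}(S_h(s-t_m)-I)$, not the exponent $(s-t_m)^{(1-\beta)/2-\epsilon/2}$ you wrote. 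Neither issue affects the viability of the argument.
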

\begin{proof}
 We derived in \eqref{eq:errorrep} the following error representation
\begin{equation}\label{eq:estimate0}
\mathbb{E}[\Phi(X^h_M)]-\mathbb{E}[\Phi(X^h(T))] = \sum\limits_{m=0}^{M-1}(b_m^1 + b_m^2)
\end{equation}
where the decompositions $b_m^1$ and $b_m^2$ are as follows
\begin{equation}
 \begin{array}{rcl} b_m^1&=&\displaystyle\int_{t_m}^{t_{m+1}}\EE\left[\langle D\mu^h(T-s,\tilde X^h(s)),
 P_hF(X_m^h)-P_hF(\tilde X^h(s))\rangle_{H} \right]ds
     \end{array}
          \end{equation}

          and
\begin{equation}
 \begin{array}{rcl}
     b_m^2&=&  \frac{1}{2}\EE\Big\{\displaystyle\int_{t_m}^{t_{m+1}}\text{Tr}\Big[D^2\mu^h(T-s,\tilde X^h(s))\Big(\left\{S_h(s-t_m)
     B_h\right\}\left\{(S_h(s-t_m)-I)B_h\right\}^*\Big)\Big]ds\Big\}\\
      && +\frac{1}{2}\EE\Big\{\displaystyle\int_{t_m}^{t_{m+1}}\text{Tr}\Big[D^2\mu^h(T-s,\tilde X^h(s))(S_h(s-t_m)-I)B_h B_h^*\Big]ds\Big\}\\
      &=&b_m^{2,1} + b_m^{2,2}.
     \end{array}
          \end{equation}
          
    Note that our term  $b_m^1 $ is  more simple than the one  in \cite{Wang2014}.      
  Applying the Taylor's formula to the drift term $F$ we get 
  $$F(\tilde X^h(s)) - F(X_m^h) = F'(X_m^h)(\tilde X^h(s) - X_m^h) + 
  \int_0^1F''(\chi(r))(\tilde X^h(s)-X_m^h,\tilde X^h(s)-X_m^h)(1-r)dr,$$
  where $\chi(r) = X_m^h + r(\tilde X^h(s)-X_m^h),$ which allows to have 
  
  \begin{equation}
   \begin{array}{rcl}
    \lefteqn{|b_m^1|}\nonumber\\
    & \leq & \Big|\displaystyle\int_{t_m}^{t_{m+1}}\EE\Big[\Big\langle_H D\mu^h(T-s,\tilde X^h(s)),
    P_hF'(X_m^h)(\tilde X^h(s) - X_m^h)\Big\rangle_H \Big]ds \Bigg|\\
    && + \Bigg|\displaystyle\int_{t_m}^{t_{m+1}}\EE\Big[\Big\langle D\mu^h(T-s,\tilde X^h(s)), \int_0^1P_hF''(\chi(r))(\tilde X^h(s)-X_m^h,\tilde X^h(s)-X_m^h)(1-r)dr\Big\rangle_H \Big]ds \Bigg|    \\
    &=& J_m^1 + J_m^2.
   \end{array}
  \end{equation}
     
     To estimate $J_m^2,$ we  use \coref{estimatesAhb},\lemref{lem}, 
     \lemref{lem:boundonxmh}  
     and  the  Holder inequality 
     \begin{equation} 
   \begin{array}{rcl}
    J_m^2 & \leq &  \displaystyle\int_{t_m}^{t_{m+1}}\EE\Big[\Big|\Big\langle (-A_h)^{-\delta}(-A_h)^\delta D\mu^h(T-s,\tilde X^h(s)), \\ && \hskip 3cm  \displaystyle\int_0^1P_hF''(\chi(r))(\tilde X^h(s)-X_m^h,\tilde X^h(s)-X_m^h)(1-r)dr\Big\rangle_H\Big| \Big]ds\\
    &=& \displaystyle\int_{t_m}^{t_{m+1}}\EE\Big[\Big|\Big\langle (-A_h)^\delta D\mu^h(T-s,\tilde X^h(s)),\\ && \hskip 2cm \displaystyle\int_0^1(-A_h^*)^{-\delta}P_hF''(\chi(r))(\tilde X^h(s)-X_m^h,\tilde X^h(s)-X_m^h)(1-r)dr\Big\rangle_H\Big| \Big]ds\\
    &\leq& \displaystyle\int_{t_m}^{t_{m+1}}\EE\Big[\Big\|(-A_h)^\delta D\mu^h(T-s,\tilde X^h(s))\Big\|\times \\ && \hskip 2cm \Big\| \displaystyle\int_0^1(-A_h^*)^{-\delta}P_hF''(\chi(r))(\tilde X^h(s)-X_m^h,\tilde X^h(s)-X_m^h)(1-r)dr\Big\| \Big]ds\\
    &\leq & c_\delta  \displaystyle\int_{t_m}^{t_{m+1}}\displaystyle\int_0^1\EE\Big[\Big\|(-A_h^*)^{-\delta}P_hF''(\chi(r))(\tilde X^h(s)-X_m^h,\tilde X^h(s)-X_m^h)\Big\|\Big](T-s)^{-\delta}\,dr\,ds\\
    &\leq & c_\delta L \displaystyle\int_{t_m}^{t_{m+1}}\displaystyle\int_0^1\EE\Big[\Big\|\tilde X^h(s)-X_m^h\Big\|^2\big](T-s)^{-\delta}\,dr\,ds\\
    &\leq & C\Delta t^{\beta-\epsilon}\displaystyle\int_{t_m}^{t_{m+1}}(T-s)^{-\delta}\,ds.
    \end{array}\label{eq:estimate2}
    \end{equation}
    We now turn to estimate $J_m^1.$ 
    Recall that from \eqref{eq:xtilde}, 
    \begin{equation}\label{eqxtildemx}
     \begin{array}{rcl}
\tilde X^h(s)-X_m^h &= &\big(S_h(s-t_m)-I\big)X_m^h + (s-t_m)\varphi_1^h(s-t_m)P_hF(X_m^h) \\
&& \hskip 2cm + S_h(s-t_m) B_h(W(s)-W(t_m)).\end{array}
    \end{equation}
    Since the expectation of the Brownian motion vanishes, we therefore have
    \begin{equation}
    \begin{array}{rcl}
     J_m^1 &\leq &  \Big|\displaystyle\int_{t_m}^{t_{m+1}}\EE\Big[\Big\langle D\mu^h(T-s,\tilde X^h(s)), P_hF'(X_m^h)\big(S_h(s-t_m)-I\big)X_m^h\Big\rangle_H \Big]ds \Bigg|\\
    && +\Big|\displaystyle\int_{t_m}^{t_{m+1}}\EE\Big[\Big\langle D\mu^h(T-s,\tilde X^h(s)),P_hF'(X_m^h)(S_h^1(s-t_m)P_hF(X_m^h)(s-t_m)\Big\rangle_H \Big]ds \Bigg|\\
    &= & I + II.
    \end{array}
    \end{equation}
Using  \lemref{lem},\coref{estimatesAhb}, \lemref{lem:boundonxmh} and \propref{prop:discreteoperators1} yields 
\begin{equation}
 \begin{array}{rcl}
 \lefteqn{I}\nonumber\\
  &\leq & c_{\frac{\delta}{2}}\displaystyle\int_{t_m}^{t_{m+1}}\EE\Big[
  \Big\|(-A_h^*)^{-\frac{\delta}{2}}P_hF'(X_m^h)\big(S_h(s-t_m)-I\big)X_m^h\Big\|\Big](T-s)^{-\frac{\delta}{2}}ds\\
  &\leq & C\displaystyle\int_{t_m}^{t_{m+1}}\Big[
  (1+\|X^h_m\|_1)|\big\|(S_h(s-t_m)-I)X_m^h\Big\|_1\Big](T-s)^{-\frac{\delta}{2}}ds\\
  &\leq & \displaystyle\int_{t_m}^{t_{m+1}}\Big[ 1+ \EE\|(-A_h)^{1/2}X^h_m\| \Big] \EE\| (-A_h)^{-\frac{1+\beta-\epsilon}{2}}) S_h(s-t_m)-I)(-A_h)^{\frac{\beta-\epsilon}{2}} X_m^h \| (T-s)^{-\frac{\delta}{2}}ds  \\
  &\leq & \displaystyle\int_{t_m}^{t_{m+1}}\Big[1+\Delta t^{\frac{\beta-1-\epsilon}{2}}\Big] (s-t_m)^{\frac{1+\beta-\epsilon}{2}} \EE\|(-A_h)^{\frac{\beta-\epsilon}{2}} X_m^h \| (T-s)^{-\frac{\delta}{2}}ds \\
  &\leq & C\Delta t^{\beta -\epsilon}\displaystyle\int_{t_m}^{t_{m+1}}(T-s)^{-\frac{\delta}{2}}ds.\\
 \end{array}\label{eq:estimate3}
\end{equation}
Now we turn to approximate the term $II.$ Note that we can rewrite \eqref{eqxtildemx} as 
$$\begin{array}{rcl}
\tilde X^h(s)-X_m^h &= &\big(S_h(s-t_m)-I\big)X_m^h + \displaystyle\int_{t_m}^s S_h(t-s)P_hF(X_m^h)dt \\
&& \hskip 2cm + S_h(s-t_m)B_h(W(s)-W(t_m)).\end{array}$$
Therefore 
\begin{equation}
 \begin{array}{rcl}
 II&=& \Big|\displaystyle\int_{t_m}^{t_{m+1}}\EE\Big[\Big\langle D\mu^h(T-s,\tilde X^h(s)),
 \displaystyle\int_{t_m}^s P_hF'(X_m^h)S_h(t-s)P_hF(X_m^h)dt\Big\rangle_H \Big]ds \Bigg|\\
 &\leq & \displaystyle\int_{t_m}^{t_{m+1}}\displaystyle\int_{t_m}^s\EE\Big[\|D\mu^h(T-s,\tilde X^h(s))\|
 \| P_hF'(X_m^h)S_h(t-s)P_hF(X_m^h)\|\Big]dtds\\
 &\leq & C \displaystyle\int_{t_m}^{t_{m+1}}\displaystyle\int_{t_m}^s\EE\Big[\|S_h(t-s)P_hF(X_m^h)\|\Big]dtds\\
 &\leq & C\EE[\|P_hF(X_m^h)\|]\displaystyle\int_{t_m}^{t_{m+1}}\displaystyle\int_{t_m}^sdtds\\
 &\leq &  C\Delta t^2.
 \end{array}\label{eq:estimate4}
\end{equation}
Combining \eqref{eq:estimate0}, 
\eqref{eq:estimate3} and \eqref{eq:estimate4}, we have
\begin{eqnarray}
 |b_m^1| \leq  C\Delta t ^{\beta-\epsilon}\displaystyle\int_{t_m}^{t_{m+1}}(T-s)^{-\frac{\delta}{2}}ds +C\Delta t^2\\
 \sum\limits_{m=0}^{M-1} |b_m^1| \leq  C\Delta t ^{\beta-\epsilon}
\end{eqnarray}

The term $b_m^2$ can be approximated just as in \cite{Wang2014} to be 
\begin{equation}\label{eq:estimatebm2}
 \sum\limits_{m=0}^{M-1} |b_m^2|\leq C \Delta t ^{\beta-\epsilon}. 
\end{equation}
 Finally we therefore have
\begin{equation} |\EE[\Phi(X_M^h)] - \EE[\Phi(X^h(T))]|\leq C\Delta t^{\beta -\epsilon}.\end{equation}
 \end{proof}
%
 \section{Weak convergence for a SPDE with multiplicative noise}
 \label{sec4}
  Now we consider the case of a stochastic partial differential equation with multiplicative noise, that is 
  \eqref{adr} 
%

For convergence proof, we make the following  assumption on the noise term.
\begin{Assumption}
\label{noisemu}
We assume  that there exists a constant $\alpha>0$ such that 
 \begin{equation}
  \|(-A)^{\frac{\beta-1}{2}}B(X)\|_{\mathcal{L}_2(H)}\leq C(1+\|X\|_{\beta-1}),\,\, \text{ for all } X\in H,\,\, \beta \in [0,1].
 \end{equation}
\end{Assumption}
One can prove just as in the proof of Proposition~\ref{prop:discreteoperators1} that the discrete operators $A_h$ and $P_hB(X)$ satisfies
 \begin{equation}\label{eq:condmultiplicative}
  \|(-A_h)^{\frac{\beta-1}{2}}P_hB(X)\|_{\mathcal{L}_2(H)}\leq Ch^{1-\beta}(1+\|X\|_{\beta-1})\leq C'(1+\|X\|_{\beta-1}),\,\, \text{ for all } X\in H.
 \end{equation}
 For  $\beta \in [0,1]$, we also have using \propref{proposition}
 \begin{eqnarray}
 \label{eq:condmultiplicative1}
  \|(-A_h)^{\frac{\beta-1}{2}}B_h(X)\|_{\mathcal{L}_2(H)}
  &=&\|(-A_h)^{\frac{\beta-1}{2}}P_hB(X)P_h\|_{\mathcal{L}_2(H)}\nonumber\\
  &\leq& C  \|(-A_h)^{\frac{\beta-1}{2}}P_hB(X)\|_{\mathcal{L}_2(H)}\nonumber\\
  &\leq& C'(1+\|X\|_{\beta-1}), \nonumber\\
  &\leq& C'(1+\|X\|),\,\, \text{ for all } X\in H.
 \end{eqnarray}
 Thanks to Lemma~\ref{lem:boundonxmh} and \eqref{eq:condmultiplicative} (or \eqref{eq:condmultiplicative1}) all the results presented above for additive noise also hold for multiplicative noise and we then have the following convergence result.
 \begin{theorem}
 \label{thm:multiplicative}
  Assume that all the conditions of Theorem~\ref{thm:mainadditive} are satisfied with the condition \eqref{eq:maincond1} replaced by \eqref{eq:condmultiplicative}.
  For  $X_0\in \mathcal{D}((-A)^{1/2}), \quad\Phi\in \mathcal{C}_2^b(H;\mathbb{R})$ and for arbitrary small $\epsilon>0,$  if the adjoint of  the discrete operator $A_h^*$ satisfies
the analogue inequality as $A_h$ in \coref{estimatesAhb}, we have the following weak error convergence rate
\begin{equation} |\EE[\Phi(X_M^h)] - \EE[\Phi(X^h(T))]|\leq C\Delta t^{\beta-\epsilon},\end{equation}
where the constant $C$ depends on $\beta, \,\epsilon,\, ...,T,\,L$ and the initial data, but is independent of $h$ and $M.$
 \end{theorem}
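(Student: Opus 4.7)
The plan is to mirror the proof of \thmref{thm:mainadditive} step by step, using the weak error representation \eqref{eq:errorrep}, and to show that every estimate goes through with the multiplicative bound \eqref{eq:condmultiplicative1} replacing the additive bound \eqref{eq:maincond}. The starting point is again the decomposition $\mathbb{E}[\Phi(X_M^h)]-\mathbb{E}[\Phi(X^h(T))] = \sum_{m=0}^{M-1}(b_m^1+b_m^2)$, and we need to verify that $\sum_m |b_m^1|$ and $\sum_m |b_m^2|$ are both $O(\Delta t^{\beta-\epsilon})$.

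As a preliminary I would re-establish the a priori bounds of \lemref{lem:boundonxmh} for the multiplicative scheme: the only change is that in the stochastic integral the Hilbert--Schmidt norm is now $\|(-A_h)^{1/2}S_h(t_m-t_k)B_h(X_k^h)\|_{\mathcal{L}_2(H)}$, which by \eqref{eq:condmultiplicative1} satisfies $\|(-A_h)^{1/2}S_h(t_m-t_k)B_h(X_k^h)\|_{\mathcal{L}_2(H)}\leq C(t_m-t_k)^{-(2-\beta+\epsilon)/2}(1+\|X_k^h\|)$, after which a discrete Gronwall argument closes the bounds exactly as in \cite{Wang2014}. This step also gives $\|\tilde X^h(s)-X_m^h\|_{L_2(\mathbb{D},H)}\leq C\Delta t^{(\beta-\epsilon)/2}$, which is what feeds the drift estimate.

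The drift term $b_m^1$ is then bounded by copying the argument from \eqref{eq:estimate2}--\eqref{eq:estimate4} verbatim: Taylor expand $F$ around $X_m^h$, split into $J_m^1+J_m^2$, pull out negative powers of $-A_h$ onto $D\mu^h$ via \lemref{lem} (using the hypothesis that $A_h^*$ satisfies the analogue of \coref{estimatesAhb}), and plug in the strengthened \lemref{lem:boundonxmh}. The only new ingredient is that the decomposition \eqref{eqxtildemx} of $\tilde X^h(s)-X_m^h$ now carries $B_h(X_m^h)(W(s)-W(t_m))$, but this martingale piece still has mean zero, so the same splitting $I+II$ works and gives $\sum_m |b_m^1|\leq C\Delta t^{\beta-\epsilon}$.

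The hard part is $b_m^2$, because $B_h(X_m^h)$ is now random and state-dependent, so the trace estimate from \cite{Wang2014} is not quite plug-and-play. I would split
\begin{equation*}
\bigl\{S_h(s-t_m)B_h(X_m^h)\bigr\}\bigl\{S_h(s-t_m)B_h(X_m^h)\bigr\}^{*}-B_h(X_m^h)B_h(X_m^h)^{*}
\end{equation*}
into the symmetric pair $\{S_h(s-t_m)B_h(X_m^h)\}\{(S_h(s-t_m)-I)B_h(X_m^h)\}^{*}+\{(S_h(s-t_m)-I)B_h(X_m^h)\}B_h(X_m^h)^{*}$ exactly as in the proof of \thmref{thm:mainadditive}, and then estimate each trace by conditioning on $\mathcal{F}_{t_m}$ and using \propref{proposition} together with \lemref{lem} to move the factors $(-A_h)^{(1-\beta)/2}$ onto $D^2\mu^h$. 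At that stage the key inequality \eqref{eq:condmultiplicative1} gives $\|(-A_h)^{(\beta-1)/2}B_h(X_m^h)\|_{\mathcal{L}_2(H)}\leq C(1+\|X_m^h\|)$, which is uniformly bounded in $L_2(\mathbb{D},\mathbb{R})$ by the a priori bound established in the preliminary step. Combining with $\|(-A_h)^{-(1-\beta+\epsilon)/2}(S_h(s-t_m)-I)\|_{L(H)}\leq C(s-t_m)^{(1-\beta+\epsilon)/2}$ and the integrability of $(T-s)^{-(1-\beta+\epsilon)}$ near $T$ yields $\sum_m|b_m^2|\leq C\Delta t^{\beta-\epsilon}$. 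The main obstacle throughout is carrying the $X$-dependence of $B$ through these trace manipulations without losing any factor of $\Delta t$; the saving observation is that whenever $B_h(X_m^h)$ appears it can be paired with a $(-A_h)^{(\beta-1)/2}$ coming off $D^2\mu^h$, converting it into an $\mathcal{L}_2(H)$ quantity controlled by a moment of $X_m^h$ that is already known to be uniformly bounded.
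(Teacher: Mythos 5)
Your overall architecture matches the paper's proof: the same decomposition into $b_m^1+b_m^2$, the observation that $b_m^1$ is handled exactly as in the additive case, the splitting of the trace difference into the pair $\left\{S_h B_h\right\}\left\{(S_h-I)B_h\right\}^*$ and $(S_h-I)B_hB_h^*$, and the strategy of pairing every occurrence of $B_h(X_m^h)$ with $(-A_h)^{(\beta-1)/2}$ so that \eqref{eq:condmultiplicative1} and the uniform moment bound on $X_m^h$ can absorb the state dependence of the noise. The paper does all of this too (it does not even condition on $\mathcal{F}_{t_m}$; it simply bounds by $\sup_i\EE(1+\|X_i^h\|)^2$).

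However, your allocation of fractional powers in the $b_m^2$ estimate is wrong and, as written, does not deliver the claimed rate. The only source of positive powers of $\Delta t$ in this term is the smoothing estimate $\|(-A_h)^{-\rho}(S_h(s-t_m)-I)\|_{L(H)}\leq C(s-t_m)^{\rho}$; the $(T-s)$-singularity coming from \lemref{lem} only integrates to a constant over $[0,T]$. You take $\rho=(1-\beta+\epsilon)/2$, which yields $\sum_m|b_m^2|\leq C\Delta t^{(1-\beta+\epsilon)/2}$ --- for $\beta$ near $1$ (the trace-class case) this is essentially no rate at all, and it matches the claimed $\Delta t^{\beta-\epsilon}$ only for $\beta$ below roughly $1/3$. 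Moreover, with your symmetric choice of $(-A_h)^{(1-\beta)/2}$ on both sides of $D^2\mu^h$, those powers are exactly consumed by the two factors $(-A_h)^{(\beta-1)/2}B_h(X_m^h)$, so the negative power $(-A_h)^{-(1-\beta+\epsilon)/2}$ you place in front of $(S_h(s-t_m)-I)$ is not actually available. The correct bookkeeping, as in the paper, is asymmetric: put $(-A_h)^{(1-\beta)/2}$ on the side of $D^2\mu^h$ facing the undamped factor $S_hB_h(X_m^h)$ and $(-A_h)^{(1+\beta)/2-\epsilon}$ on the side facing $(S_h-I)B_h(X_m^h)$; the total exponent is $1-\epsilon<1$, so \lemref{lem} still gives the integrable singularity $(T-s)^{-1+\epsilon}$, and after peeling off $(-A_h)^{(\beta-1)/2}$ for that $B_h$ there remains exactly $(-A_h)^{-\beta+\epsilon}$ to smooth $(S_h(s-t_m)-I)$, producing the needed $(s-t_m)^{\beta-\epsilon}$. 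With the exponents corrected, the rest of your plan goes through as in the paper.
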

 \begin{proof}
 The proof follows the same lines as that of \thmref{thm:mainadditive} 
 \begin{equation}
\mathbb{E}[\Phi(X^h_M)]-\mathbb{E}[\Phi(X^h(T))] = \sum\limits_{m=0}^{M-1}(b_m^{1} + b_m^{2}).
\end{equation}
 The  term $b_m^{1}$ is the same as in \thmref{thm:mainadditive}.  Here $ b_m^{2}$ is given by
 \begin{equation}
 \begin{array}{rcl}
 b_m^2&=&  \frac{1}{2}\EE \Big[\displaystyle\int_{t_m}^{t_{m+1}}
     \text{Tr}\Big[D^2\mu^h(T-s,\tilde X^h(s))\left (S_h(s-t_m) B_h(X_m^h)\right) \\ 
     && \hskip 5cm \times \left((S_h(s-t_m)-I)B_h(X_m^h)\right)^*\Big]ds\Big]\\

      &&+ \frac{1}{2}\EE\Big [\displaystyle\int_{t_m}^{t_{m+1}}\text{Tr}\Big[D^2\mu^h(T-s,\tilde X^h(s))(S_h(s-t_m)-I) \left(B_h(X_{m}^{h})\right) \left(B_h(X_{m}^{h})\right)^*\Big]ds\Big]\\
      &=&b_m^{2,1} + b_m^{2,2}.
     \end{array}
 \end{equation}
 
 Let us estimate $b_m^{2,1}$. \propref{proposition} allows to have  
\begin{equation}
 \begin{array}{rcl}
   \lefteqn{ b_m^{2,1}}\nonumber\\
   &\leq&  \frac{1}{2}\EE\Big [\displaystyle\int_{t_m}^{t_{m+1}}
     \Big\|D^2\mu^h(T-s,\tilde X^h(s))\left [S_h(s-t_m) B_h(X_m^h)\right] \\ 
     && \hskip 5cm\times \left[(S_h(s-t_m)-I)B_h(X_m^h)\right]^*\Big\|_{\mathcal{L}_1(H)}ds\Big]\\
     &= & \frac{1}{2}\EE\Big [\displaystyle\int_{t_m}^{t_{m+1}}
     \Big\|(-A_h)^{\frac{\beta+1}{2}-\epsilon}D^2\mu^h(T-s,\tilde X^h(s))(-A_h)^{\frac{1-\beta}{2}} (-A_h)^{\frac{\beta-1}{2}}\left [S_h(s-t_m) B_h(X_m^h)\right ]\\ 
     && \hskip 5cm\times \left[(-A_h)^{-\frac{\beta+1}{2}+\epsilon}(S_h(s-t_m)-I)B_h(X_m^h)\right ]^*\Big\|_{\mathcal{L}_1(H)}ds\Big]\\
     \end{array}
 \end{equation}
  Using \lemref{lem},\propref{proposition},\propref{prop1} and \eqref{eq:condmultiplicative1} yields the following estimations
  
     \begin{equation}
     \begin{array}{rcl}
     \lefteqn{b_m^{2,1}} \nonumber\\
     &\leq & C \EE\Big [\displaystyle\int_{t_m}^{t_{m+1}}
     \Big\|(-A_h)^{\frac{\beta-1}{2}}S_h(s-t_m) B_h(X_m^h)\left[(-A_h)^{\frac{\beta+1}{2}-\epsilon}(S_h(s-t_m)-I)B_h(X_m^h)\right]^*\Big\|_{\mathcal{L}_1(H)} \\ 
     && \hskip 5cm\times [(T-s)^{-1+\epsilon}+1]ds\Big ]\\    
     &\leq & C\EE\Big [ \displaystyle\int_{t_m}^{t_{m+1}}
     \Big\|(-A_h)^{\frac{\beta-1}{2}}S_h(s-t_m) B_h(X_m^h)\Big\|_{\mathcal{L}_2(H)}\\ && 
     \times \Big\|(-A_h)^{-\frac{\beta+1}{2}+\epsilon}(S_h(s-t_m)-I)B_h(X_m^h)\Big\|_{\mathcal{L}_2(H)} [(T-s)^{-1+\epsilon}+1]ds\Big]\\  
      &\leq & C\EE\Big [\displaystyle\int_{t_m}^{t_{m+1}}
     \Big\|(-A_h)^{\frac{\beta-1}{2}}B_h(X_m^h)\Big\|_{\mathcal{L}_2(H)}\\ && \times \Big\|(-A_h)^{-\beta+\epsilon}(S_h(s-t_m)-I)(-A_h)^{\frac{\beta-1}{2}}B_h(X_m^h)\Big\|_{\mathcal{L}_2(H)} [(T-s)^{-1+\epsilon}+1]ds\Big]\\ 
     &\leq & C\Big [\displaystyle\int_{t_m}^{t_{m+1}}
     \underset { 0 \le i \leq M }{\sup}\EE(1+\|X_i^h\|)^2\cdot \Big\|(-A_h)^{-\beta+\epsilon}(S_h(s-t_m)-I)\Big\|_{L(H)} [(T-s)^{-1+\epsilon}+1]ds\Big]\\      
      &\leq & C\Delta t^{\beta-\epsilon}\displaystyle\int_{t_m}^{t_{m+1}}
     (T-s)^{-1+\epsilon}ds + C\Delta t^{\beta-\epsilon+1 }.
     \end{array}
 \end{equation}
 Similar as for $b_m^{2,2}$, we have
 \begin{equation}
     \begin{array}{rcl}
     \lefteqn{b_m^{2,2}} \nonumber\\
     & =& \frac{1}{2}\EE\Big [\displaystyle\int_{t_m}^{t_{m+1}}\text{Tr}\Big[ (-A_h)^{\frac{1-\beta}{2}}D^2\mu^h(T-s,\tilde X^h(s))(-A_h)^{\frac{\beta+1}{2}-\epsilon} \\
    &&  \times (-A_h)^{-\frac{\beta+1}{2}+\epsilon}(S_h(s-t_m)-I) \left(B_h(X_{m}^{h}) \right) \left((-A_h)^{\frac{\beta-1}{2}} B_h(X_{m}^{h})\right)^*\Big]ds\Big]\\
      &\leq & C \EE\Big [\displaystyle\int_{t_m}^{t_{m+1}}
     \Big\|(-A_h)^{-\beta+\epsilon}\left(S_h(s-t_m)-I\right) (-A_h)^{\frac{\beta-1}{2}}B_h(X_m^h)\left[(-A_h)^{\frac{\beta-1}{2}}B_h(X_m^h)\right]^*\Big\|_{\mathcal{L}_1(H)} \\ 
     && \hskip 5cm\times [(T-s)^{-1+\epsilon}+1]ds\Big ]\\  
    &\leq& C\Delta t^{\beta-\epsilon}\displaystyle\int_{t_m}^{t_{m+1}}
     (T-s)^{-1+\epsilon}ds + C\Delta t^{\beta-\epsilon+1}.
     \end{array}
\end{equation}
So  by summing up  as in \thmref{thm:mainadditive}, the proof is ended.
 \end{proof}
\section{Strong convergence and toward full weak convergence results} 
\label{sec5}
 The goal here is to provide the space and time  convergence proof of the exponential scheme. Before  that we will provide strong convergence of  the semi discrete solution.
\begin{theorem}
\label{strong}
 Let $X^h$ and  $\overline{X}_h$ be the solutions respectively of \eqref{dadr}  and  the following  semi discrete problem
\begin{eqnarray}
\label{dadrnt}
  d\overline{X}_h&=&(A_{h}\overline{X}_h +P_{h}F(\overline{X}_h))dt + P_{h} B(\overline{X}_h)d W\\
  \overline{X}_h(0)&=&P_{h}X_{0} \nonumber.
\end{eqnarray}
 Let  $ \beta \in [0,2)$, 
 assume that \assref{assumption1} and \assref{assumption2}
 are satisfied. For  $\beta \in [0,1]$  assume  that the relation \eqref{asseq4} of \assref{ass:driftandB} (when dealing with additive noise) and \assref{noisemu}
 (when dealing with  multiplicative noise) are satisfied.  For   $\beta \in [1,2)$  assume  that $B(\mathcal{D}((-A)^{\frac{\beta-1}{2}}))\subset HS\left(Q^{1/2}(H),\mathcal{D}((-A)^{\frac{\beta-1}{2}})\right)$
and $\Vert (-A)^{\frac{\beta-1}{2}}B(v)\Vert_{L_{0}^{2}}\leq c(1+\Vert v\Vert_{\beta-1})$ for $v \in \mathcal{D}((-A)^{\frac{\beta-1}{2}})$. 
If  $X_0 \in L_2(\mathbb{D},\mathcal{D}((-A)^{\beta/2})) $,  there exists a positive constant $C$ independent of $h$ such that the following  estimations hold: 
 \begin{eqnarray}
 \label{noseleq}
 \Vert X(t)-\overline{X}_{h}(t)\Vert_{L_2(\mathbb{D},H)} \leq Ch^{\beta},\;\;\,\, \beta \in [0,1].
 \end{eqnarray}
 Furthermore assume  that the linear operator $A$ is self adjoint, the following  estimation  hold
 \begin{eqnarray}
 \label{seleq}
 \Vert X(t)-X^{h}(t)\Vert_{L_2(\mathbb{D},H)} \leq Ch^{\beta}, \,\;\; \beta \in [0,2).
 \end{eqnarray}
 \end{theorem}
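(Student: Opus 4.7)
The plan is to work with the mild representations of $X$ and of the two semi-discrete solutions, subtract them, and decompose each resulting integral into a \emph{deterministic semigroup error} (same integrand, operator $G_h(t) := S(t) - S_h(t)P_h$) plus a \emph{propagation error} (same semigroup $S_h(t-s)P_h$, different arguments). The central tool is the classical bound
\[
\|G_h(t)v\| \le C h^{\beta}\, t^{-(\beta-\alpha)/2}\|v\|_{\alpha},\qquad 0\le \alpha\le \beta\le 2,
\]
which holds for any analytic sectorial semigroup (cf.\ \cite{lions}) and therefore applies independently of whether $A$ is self-adjoint.

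For \eqref{noseleq} with $\beta\in[0,1]$, the initial-data term contributes $Ch^{\beta}\|X_0\|_{\beta}$ directly. The drift difference splits as $\int_0^t G_h(t-s)F(X(s))\,ds$, whose $L_2(\mathbb{D},H)$-norm is of order $h^{\beta}$ by \assref{assumption1} and \thmref{newtheo}, plus a Lipschitz-controlled propagation term ready for a Gronwall argument. For the stochastic integral I would apply It\^o's isometry together with \assref{noisemu} (or \eqref{asseq4} in the additive case) in order to shift $(-A)^{(\beta-1)/2}$ onto $B$, yielding
\[
\mathbb{E}\Bigl\|\textstyle\int_0^t G_h(t-s)B(X(s))\,dW(s)\Bigr\|^{2} \le C h^{2\beta}\!\int_0^t (t-s)^{\beta-1}\bigl(1+\mathbb{E}\|X(s)\|_{\beta-1}^{2}\bigr)ds,
\]
which is finite for $\beta\in[0,1]$. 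The propagation piece is controlled via \assref{assumption2}, and a Gronwall-type lemma closes the estimate.

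For the self-adjoint estimate \eqref{seleq} with $\beta\in[1,2)$, I would exploit the additional regularity assumption $B(\mathcal{D}((-A)^{(\beta-1)/2}))\subset HS(Q^{1/2}(H),\mathcal{D}((-A)^{(\beta-1)/2}))$ to rewrite the stochastic error as
\[
\mathbb{E}\Bigl\|\textstyle\int_0^t G_h(t-s)B(X^h(s))P_h\,dW(s)\Bigr\|^{2} \le C\!\int_0^t\|G_h(t-s)(-A)^{(1-\beta)/2}\|_{L(H)}^{2}\bigl(1+\mathbb{E}\|X^h(s)\|_{\beta-1}^{2}\bigr)ds,
\]
and then invoke the sharp self-adjoint bound $\|G_h(t)(-A)^{(1-\beta)/2}\|_{L(H)}\le C h^{\beta} t^{-1/2}$, whose square is integrable on $(0,T)$, giving the required $h^{2\beta}$. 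This step closely mirrors Kruse's reasoning in \cite{kruse}; the inner projection $P_h$ in the definition of $X^h$ is crucial, since it keeps all quantities in $V_h$ so that the spectral estimates can be applied on the discrete side. The drift and initial-data terms are treated as before with $\alpha=\beta$ in the classical bound.

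The main obstacle is the self-adjoint sharp estimate at $\beta$ close to $2$, where $\|G_h(t)(-A)^{(1-\beta)/2}\|_{L(H)}$ becomes borderline singular in time; its proof relies on the spectral decomposition of $A$. This is precisely why \eqref{seleq} cannot be extended to non-self-adjoint $A$ along the same route, and why \eqref{noseleq} stops at $\beta=1$ in the non-self-adjoint case. Pushing \eqref{noseleq} beyond $\beta=1$ without self-adjointness would require a new deterministic estimate for $G_h(t)(-A)^{(1-\beta)/2}$ in the purely sectorial setting, presumably via duality on $A^{*}$ and interpolation between fractional power spaces, and I would expect most of the technical effort to be concentrated there.
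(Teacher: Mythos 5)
Your overall decomposition (initial data term, drift term split into a $T_h$-error plus a Lipschitz propagation term, stochastic convolution, then Gronwall) matches the paper's, and your treatment of the deterministic pieces is fine. The genuine gap is in the stochastic convolution. After It\^{o}'s isometry, the pointwise bound $\Vert T_h(t)v\Vert \le C h^{\beta} t^{-(\beta-\alpha)/2}\Vert v\Vert_{\alpha}$ applied with $\alpha=\beta-1$ (i.e.\ after shifting $(-A)^{(\beta-1)/2}$ onto $B$) produces the factor $(t-s)^{-1/2}$, whose \emph{square} $(t-s)^{-1}$ is not integrable on $(0,t)$ --- your explicit claim in the $\beta\in[1,2)$ part that the square of $Ch^{\beta}t^{-1/2}$ is integrable on $(0,T)$ is false, and the exponent $(t-s)^{\beta-1}$ in your key display does not follow from the estimate you quote (it would require $\Vert B(X(s))\Vert_{2\beta-1}$ regularity, which is not assumed). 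So as written the central inequality of your argument diverges.

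The paper avoids this by two devices you do not use. First, it adds and subtracts $B(X(t))$ inside the stochastic convolution, so that the piece $T_h(t-s)\bigl(B(X(s))-B(X(t))\bigr)$ can be estimated with the pointwise bound at $r=\beta$, $\gamma=0$ combined with the temporal H\"{o}lder continuity $\Vert X(s)-X(t)\Vert_{L_2(\mathbb{D},H)}\le C(t-s)^{\beta/2}$, which exactly cancels the singularity. Second, the remaining frozen piece $T_h(t-s)B(X(t))$ is handled by the \emph{integrated} error estimate of \lemref{lemme11}(ii),
\begin{equation*}
\Bigl(\int_0^{t}\Vert T_h(s)v\Vert^{2}\,ds\Bigr)^{1/2}\le C h^{\gamma}\Vert v\Vert_{\gamma-1},
\end{equation*}
which is a genuinely sharper $L^2$-in-time statement proved by energy/duality arguments (Yan, Kruse, Thom\'{e}e), not a consequence of the pointwise smoothing bound, and which requires a fixed vector $v$ --- hence the need for the add-and-subtract step. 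Without these two ingredients the Gronwall argument cannot be closed, so you should restructure the stochastic term accordingly. (A minor secondary point: the paper observes that the proof of this integrated estimate does not actually use the spectral decomposition and extends to non-self-adjoint $A$ via Thom\'{e}e's results, so your diagnosis of where self-adjointness enters is not quite where the paper places it.)
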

 Before   prove \thmref{strong} let us make some remarks and provide some preparatory results.
 \begin{remark}
  \thmref{strong} extends \cite[Theorem 1.1]{kruse} for non-self-adjoint operator $A$ and also provide optimal convergence proof for $\beta \in [0,1)$ which was not studied in \cite{kruse}.
 \end{remark}

 \begin{remark}
 \label{troncate}
 For additive noise, we can observe  from \cite{stigstrong} that the semi discrete problem \eqref{dadr} is equivalent to the following problem, find $X_h(t) \in V_h$ such that
  \begin{eqnarray}
\label{dadrntt}
  d X_h&=&(A_{h} X^h +P_{h}F(X^h))dt + d W_h\\
  X^h(0)&=&P_{h}X_{0} \nonumber.
\end{eqnarray}
where  $W_h(t)$ is  a $P_{h}QP_{h}$-Wiener process on $V_h$ with the  following representation
\begin{eqnarray}
  \label{eq:Wh}
  W_h(t)=\underset{ i=1}{\sum^{N_h}}\sqrt{q_{h,i}}e_{h,i}\beta_{i}(t),
\end{eqnarray}
where $(q_{h,i},e_{h,i})$ are the eigenvalues and  eigenfunctions
of the covariance operator $Q_h:=P_{h}QP_{h}$ and $\beta_{i}$ are independent and identically distributed
standard Brownian motions. More precisely $(q_{h,i},e_{h,i})$ is the finite element solution of the eigenvalue problem $Qu=\gamma u$. 

If  the exact eigenvalues and eigenfunctions  $(q_{i},e_{i})$  of  the covariance operator $Q$ are known, replacing  in \eqref{dadrntt} (or \eqref{dadr}) $W_h(t)$ by $W_h^{N_h}(t)$, defined  by
\begin{eqnarray}
  \label{eq:Whh}
  W_h^{N_h}(t)=\underset{ i=1}{\sum^{N_h}}\sqrt{q_{i}}e_{i}\beta_{i}(t),
\end{eqnarray}
will not necessarily  change the optimal convergence order in our scheme. From  \cite{stigstrong}, it is also proved that if  the  kernel of the covariance function $Q$ is regular
and  the mesh family is quasi-uniform, it is enough to take $M < N_h$  noise terms in  \eqref{eq:Whh} (or \eqref{eq:Wh}) without loss the optimal convergence order.

Of course for  multiplicative noise $P_h W$ can  also be expanded  on the basis of $V_h$ with $N_h$ terms (see \cite{Yn:05}).
\end{remark}

 As we are also dealing in \thmref{strong} with non-self-adjoint operator in  \eqref{noseleq}, let us provide some  preparatory results
 before giving  the proof  of \thmref{strong}.
 
 We introduce  the Riesz representation operator  $R_{h}: V \rightarrow V_{h}$ defined by 
\begin{eqnarray*} 
 (-A R_{h}v,\chi)=(-A v,\chi)=a(v,\chi),\qquad \qquad v \in V,\; \forall \chi \in V_{h}.
\end{eqnarray*}
Under the usual regularity assumptions on the triangulation
and in view of $V-$ellipticity \eqref{ellipticity},  it is well known (see \cite{lions}) that the following error bounds holds
\begin{eqnarray}
\label{regulard}
  \Vert R_{h}v-v\Vert +h \Vert
    R_{h}v-v\Vert_{H^{1}(\Omega} \leq  C h^{r} \Vert
  v\Vert_{H^{r}(\Omega)}, \qquad v\in V\cap
  H^{r}(\Omega),\; \; r \in \{1,2\}. 
\end{eqnarray}
By interpolation, we have 
\begin{eqnarray}
\label{regulard}
  \Vert R_{h}v-v\Vert +h \Vert
    R_{h}v-v\Vert_{H^{1}(\Omega} \leq  C h^{r} \Vert
  v\Vert_{r}, \qquad v \in \mathcal{D}(A^{r/2}),\; \; 1 \leq r \leq 2. 
\end{eqnarray}
Let us consider  the following deterministic problem, which  consists of finding $u \in V$ such that such that  
\begin{eqnarray}
\label{homog}
u'=Au \qquad 
\text{given} \quad u(0)=v,\qquad  t\in (0,T] .
\end{eqnarray}
The corresponding semi-discretization in space is : Find $u_{h} \in
V_{h}$ such that  
$$u_{h}'=A_{h}u_{h}$$ 
where $u_{h}^{0}=P_{h}v$.
Define the operator 
\begin{eqnarray}
\label{form1}
T_{h}(t) :=  S(t)-S_{h}(t) P_{h} = e^{tA} - e^{tA_h}P_h
\end{eqnarray} 
so that $u(t)-u_{h}(t)= T_{h}(t) v$.

The following lemma will be important in our proof.
\begin{lemma}
\label{lemme11}
The following estimates hold on the semi-discrete approximation of\eqref{homog}. There exists  a constant $C>0$ such that 
\begin{itemize}
 \item (i)  For  $v \in \mathcal{D}((-A)^{\gamma/2})$
 \begin{eqnarray}
 \label{form4}
 \Vert u(t)-u_{h}(t)\Vert &=&\Vert T_{h}(t) v\Vert \leq C h^{r} t^{-(r-\gamma)/2}\Vert v \Vert_{\gamma},\;1\leq r  \leq 2, \, \;\; 0\leq \gamma \leq r.
\end{eqnarray}
\item (ii) For  $v \in \mathcal{D}((-A)^{(\gamma-1)/2})$
\begin{eqnarray}
 \left(\int_0^{t} \Vert T_{h}(s) v\Vert^{2}ds \right)^{\frac{1}{2}} \leq Ch^{\gamma}\Vert v\Vert_{\gamma-1},\,\, 0 \leq \gamma\leq 2.
\end{eqnarray}
\end{itemize}
\end{lemma}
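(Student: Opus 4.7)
Both estimates follow from the classical Galerkin splitting $u(t) - u_h(t) = \rho(t) + \theta(t)$, with $\rho(t) := u(t) - R_h u(t)$ (Ritz projection error) and $\theta(t) := R_h u(t) - u_h(t)$ (discrete error), combined with the smoothing properties of the semigroups $S(t)$ and $S_h(t)$. I will use only analyticity/sectoriality of $A$ and the G\aa{}rding inequality \eqref{coer}, so that no self-adjointness is required; the adjoint $-A^*$ is sectorial with the same type of estimates, and this will be used for part (ii).

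For part (i), to bound $\rho$, I will combine \eqref{regulard} with the smoothing estimate from \propref{prop1} giving $\|u(t)\|_r = \|S(t)v\|_r \leq C t^{-(r-\gamma)/2}\|v\|_{\gamma}$, and conclude $\|\rho(t)\| \leq C h^r t^{-(r-\gamma)/2}\|v\|_{\gamma}$. To bound $\theta$, I use the identity $A_h R_h = P_h A$ on $\mathcal{D}(A)$ (consequence of the definitions of $R_h$ and $A_h$) to see that $\theta$ solves
$$\theta'(t) = A_h \theta(t) + (P_h - R_h) A u(t), \qquad \theta(0) = P_h v - R_h v,$$
so Duhamel's formula gives $\theta(t) = S_h(t)(P_h v - R_h v) - \int_0^t S_h(t-s)(R_h - P_h) A u(s)\,ds$. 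Each term is then controlled by combining discrete semigroup smoothing $\|(-A_h)^{\beta} S_h(t)\|_{L(H)} \leq C t^{-\beta}$ with \eqref{regulard} applied to $u(s) = S(s)v$. I will first establish the endpoint cases $\gamma = r$ (optimal energy regime) and $(\gamma,r) = (0,2)$ (full use of smoothing to absorb a $t^{-1}$ factor); the intermediate cases $\gamma \in (0,r)$ then follow by a standard interpolation argument using the interpolation identification of $\mathcal{D}((-A)^{\gamma/2})$ from \cite{lions}, which remains valid in the non-self-adjoint setting.

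For part (ii), direct insertion of (i) with $r = \gamma+1$ yields $\|T_h(s) v\| \leq C h^{\gamma+1} s^{-1/2}\|v\|_{\gamma-1}$, whose square is not integrable near $s = 0$, so (i) alone is insufficient. I will therefore appeal to a duality argument. Writing
$$\left(\int_0^t \|T_h(s) v\|^2\, ds\right)^{1/2} = \sup_{\|\varphi\|_{L^2(0,t;H)} \leq 1} \int_0^t (T_h(s) v, \varphi(s))_H\, ds,$$
I transpose each pairing onto the backward adjoint problem $-w_s = A^* w + \varphi$, $w(t) = 0$, and its semidiscrete analogue $w_h$ in $V_h$. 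Since $-A^*$ is sectorial with the transposed bilinear form satisfying the same G\aa{}rding inequality, the analogue of (i) holds with $T_h(s)$ replaced by $T_h^\ast(s) := S^\ast(s) - P_h S_h^\ast(s)$. Parabolic maximal $L^2$-regularity of the adjoint equation together with this transposed version of (i) delivers the required $h^{\gamma}$ factor when finally paired against $v \in \mathcal{D}((-A)^{(\gamma-1)/2})$ via \eqref{gerland1}.

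The main obstacle is exactly this $L^2$-in-time estimate at low regularity for non-self-adjoint $A$: one cannot expand in eigenfunctions of $A$, so the duality/adjoint route is forced. A secondary technical point is the interpolation step in (i), which must be carried out in the scales $\mathcal{D}((-A)^{\gamma/2})$ without appealing to spectral decomposition; both issues are resolved by using only sectoriality and the interpolation results of \cite{lions}.
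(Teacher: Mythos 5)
Your route is genuinely different from the paper's: the paper does not prove this lemma from scratch, but delegates (i) to \cite{GTambueexpoM} and (ii) to the energy-type arguments of \cite{Yn:04} (for $\gamma\in[0,1]$) and \cite{kruse} (for $\gamma\in[1,2)$), observing that those proofs avoid spectral decompositions and that the non-self-adjoint case is covered by substituting Thom\'ee's Lemma 4.3 / estimate (4.17) of \cite{vidar} for the self-adjoint ingredient. Your $\rho$--$\theta$ splitting for (i) and the backward-adjoint duality for (ii) are legitimate classical alternatives, and your observation that (i) with $r=\gamma+1$ cannot simply be squared and integrated is exactly the point that forces a separate argument for (ii).

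There is, however, a genuine gap in your plan for part (i) at the nonsmooth-data endpoint $(\gamma,r)=(0,2)$ --- precisely the endpoint your interpolation step relies on. First, for $v\in H$ only, $R_h v$ and hence $\theta(0)=P_hv-R_hv$ are not defined, since $R_h$ acts on $V$. Second, and more seriously, the announced control of the Duhamel term by ``discrete smoothing plus \eqref{regulard}'' does not close: the forcing $(R_h-P_h)Au(s)=-P_h(I-R_h)Au(s)$ satisfies only $\Vert (I-R_h)Au(s)\Vert\leq Ch^{2}\Vert Au(s)\Vert_{2}\leq Ch^{2}s^{-2}\Vert v\Vert$, and even after integrating by parts in $s$ (to trade $\rho_t$ for $\rho$ at the cost of $A_hS_h(t-s)$) one is left with $\int_0^t(t-s)^{-1}s^{-1}\,ds$, which diverges logarithmically at both endpoints. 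The classical proofs of $\Vert T_h(t)v\Vert\leq Ch^{2}t^{-1}\Vert v\Vert$ use a different mechanism --- weighted energy estimates for $t\theta$ and $t\theta_t$ as in Thom\'ee's Theorem 3.5, or a Dunford contour representation of $T_h(t)$ combined with the resolvent error bound $\Vert(\lambda I-A)^{-1}-(\lambda I-A_h)^{-1}P_h\Vert_{L(H)}\leq Ch^{2}$ on the sector --- and you need one of these to make the endpoint, and hence the interpolation to intermediate $\gamma$, rigorous. (There is also a harmless sign slip: the correct equation is $\theta'=A_h\theta+(R_h-P_h)Au$, though this does not affect any norm bound.) Part (ii) as written is only a sketch --- the duality identity, the treatment of the initial pairing $(v,(I-P_h)w(0))$ at negative regularity $\gamma<1$, and the maximal-regularity bounds for the backward adjoint problem all remain to be written out --- but the route is sound and is essentially the mechanism behind the non-self-adjoint estimate of \cite{vidar} that the paper itself invokes.
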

\begin{proof}
 The  proof of (i) can be found in \cite{GTambueexpoM} using \eqref{regulard}. For self adjoint operator, the proof of (ii) is done as in  \cite[Lemme 4.1]{Yn:04} if $\gamma \in [0,1]$ 
 and in\cite[Lemma 4.2]{kruse}  if $\gamma \in [1,2)$ and  the parameter $r$
 used in \cite[Lemma 4.2]{kruse} is $r=\gamma-1$. Both proofs only uses general concepts and not spectral decomposition of  the linear operator $A$, so can easily be generalized. Note 
 that the non-self adjoint case should make  use 
 of  \cite[4.17]{vidar} or \cite[Lemma 4.3]{vidar}  instead of \cite[2.29]{vidar} used in \cite[Lemma 4.2]{kruse}.
 
\end{proof}

Let us now provide the proof of \thmref{strong}.

\begin{proof}
The proof of the estimation  \eqref{seleq} for additive noise can be found  in   \cite{stigstrong} and can  be updated to multiplicative noise following \cite{Yn:05}.

For $\beta \in [1,2)$, the proof for the estimation \eqref{noseleq} when $A$  is self adjoint operator  can be found in \cite{kruse} as the \assref{assumption1} implies \cite[Assumption 2.1]{kruse} 
by taking  $r=\beta-1$ in \cite[Theorem 1.1]{kruse}.   
Let us give more general proof by  closely follow  \cite{stigstrong,kruse}.
The corresponding mild solution  of \eqref{dadrnt}is given by
\begin{eqnarray}
\overline{X}_h(t)= S_h(t) P_hX_0+ \int_{0}^{t}S_h(t-s)P_hF(\overline{X}_h(s))ds +\int_{0}^{t} S_h(t-s)P_h B(\overline{X}_h(s))d W(s).
\end{eqnarray}

Indeed, we have
 \begin{eqnarray}
 \lefteqn{\Vert X(t)-\overline{X}_{h}(t)\Vert_{L_2(\mathbb{D},H)}} \nonumber\\
 &\leq&  \Vert  T_h(t)P_hX_0+ \int_{0}^{t}S(t-s)F(X(s))ds -\int_{0}^{t}S_h(t-s)P_hF(\overline{X}_h(s))ds\Vert_{L_2(\mathbb{D},H)} \nonumber\\
 && + \Vert \int_{0}^{t} S(t-s)B(X(s))d W(s)-\int_{0}^{t} S_h(t-s)P_h B(\overline{X}_h(s))d W(s) \Vert_{L_2(\mathbb{D},H)}\nonumber\\
 &=& I_1 +I_2.
\end{eqnarray}
The estimation of $I_1$ is  the same as in \cite{stigstrong} and we have 
\begin{eqnarray}
 I_1 \leq C \int_{0}^{t} \Vert X(s)-\overline{X}_{h}(s)\Vert_{L_2(\mathbb{D},H)} ds +C h^{\beta}.
\end{eqnarray}
For  the estimation of $I_2$, we follow closely \cite{kruse}. Indeed we have 
\begin{eqnarray}
 I_2 &\leq& C \Big ( \mathbb{E} \Big [  \int_{0}^{t}  \Vert S(t-s)B(X(s))-S_h(t-s)P_h B(\overline{X}_{h}(s)) \Vert_{\mathcal{L}_2(H)}^2 ds  \Big ] \Big)^{1/2}\\
    &\leq&  C \Big \Vert \Big ( \int_{0}^{t}  \Vert S_h(t-s)P_h ( B(\overline{X}_{h}(s))-B(X(s)))\Vert_{\mathcal{L}_2(H)}^2 ds  \Big )^{1/2}  \Big \Vert_{L_2(\mathbb{D},\mathbb{R})}\\
    &&+ C \Big \Vert \Big ( \int_{0}^{t}  \Vert T_h(t-s)( B(X(s))-B(X(t)))\Vert_{\mathcal{L}_2(H)}^2 ds  \Big )^{1/2}  \Big \Vert_{L_2(\mathbb{D},\mathbb{R})}\\
    &&+ C \Big \Vert \Big ( \int_{0}^{t}  \Vert T_h(t-s)B(X(t)\Vert_{\mathcal{L}_2(H)}^2 ds  \Big )^{1/2}  \Big \Vert_{L_2(\mathbb{D},\mathbb{R})}\\
     &=& I_2^{1}+  I_2^{2}+  I_2^{3}.   
\end{eqnarray}
The stability propriety of  the semi group \propref{prop1} and the Lipschitz condition in \assref{assumption1} allow to have
\begin{eqnarray}
 I_2^{1} \leq  C \left( \int_{0}^{t} \Vert X(s)-\overline{X}_{h}(s)\Vert_{L_2(\mathbb{D},H)}^2 ds\right)^{\frac{1}{2}}.
\end{eqnarray}
Following closely \cite{kruse}, but with  \eqref{form4} in \lemref{lemme11} 
with $r= \beta, \, \gamma=0$,
allow to have
\begin{eqnarray}
  I_2^{2} \leq C h^{\beta}.
\end{eqnarray}

For $\beta \in [0,2)$,  as in \cite{kruse}, by using  (ii) in \lemref{lemme11}  gives
\begin{eqnarray}
 I_2^{3} \leq  C h^{\beta}.
\end{eqnarray}

Coming back to $I_2$, we have 
\begin{eqnarray}
 &&\Vert \int_{0}^{t} S(t-s)B(X(s))d W(s)-\int_{0}^{t} S_h(t-s)P_h B(\overline{X}_h(s))d W(s) \Vert_{L_2(\mathbb{D},H)} \\
 &\leq&   C h^{2-\epsilon} +C \left( \int_{0}^{t} \Vert X(s)-\overline{X}_{h}(s)\Vert_{L_2(\mathbb{D},H)}^2 ds\right)^{\frac{1}{2}}\\
 &\leq&   C h^{\beta} +C \left( \int_{0}^{t} \Vert X(s)-\overline{X}_{h}(s)\Vert_{L_2(\mathbb{D},H)}^2 ds\right)^{\frac{1}{2}}.
\end{eqnarray}
Combining $I_1$ and $I_2$  gives
\begin{eqnarray}
 \Vert X(t)-\overline{X}_{h}(t)\Vert_{L_2(\mathbb{D},H)}^2  &\leq&  Ch^{2 \beta} +C \int_{0}^{t} \Vert X(s)-\overline{X}_{h}(s)\Vert_{L_2(\mathbb{D},H)}^2 ds.
\end{eqnarray}
Gronwall's lemma  is therefore applied to end the proof.
\end{proof}

The following theorem provide the full weak convergence when the solution is regular enough.
\begin{theorem}
\label{fullweak}
 Let $X$  and $X_M^h$ be respectively the solution of \eqref{adr} and the numerical solution from \eqref{eq:fvscheme} at the final time $T$.
 Let  $ \beta \in [1,2)$, assume that \assref{assumption1} (for multiplicative noise, all conditions except \eqref{asseq4}) and \assref{assumption2} (for multiplicative noise)
 are satisfied. For $\beta=1$ (trace class noise)  assume \eqref{asseq4} of \assref{ass:driftandB} (when dealing with additive noise) and \assref{noisemu}
 (when dealing with  multiplicative noise) are satisfied.
   For   $\beta \in (1,2)$  assume  that  \eqref{asseq4} of \assref{ass:driftandB} is also satisfied for additive noise, 
  but  $B(\mathcal{D}((-A)^{\frac{\beta-1}{2}}))\subset HS\left(Q^{1/2}(H),\mathcal{D}((-A)^{\frac{\beta-1}{2}/2})\right)$
and, $\Vert (-A)^{\frac{\beta-1}{2}}B(v)\Vert_{L_{0}^{2}}\leq c(1+\Vert v\Vert_{\beta-1})$ for $v \in \mathcal{D}((-A)^{\frac{\beta-1}{2}})$ for multiplicative noise. 
 Furthermore assume that $X_0\in \mathcal{D}((-A)^{\beta/2})$ and the linear operator is selfadjoint.
 For $\Phi\in \mathcal{C}_2^b(H;\mathbb{R})$ and  arbitrary small $\epsilon>0,$ the following estimation hold
\begin{equation} |\EE[\Phi(X_M^h)] - \EE[\Phi(X(T)]|\leq C(\Delta t^{1-\epsilon}+h^\beta),
\end{equation}
where the constant $C$ depends on $\alpha, \,\epsilon,\, ...,T,\,L$ and the initial data, but is independent of $h$ and $M.$ 
\end{theorem}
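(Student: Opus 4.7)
The plan is to split the full error by inserting the semi-discrete solution $X^h(T)$,
\begin{equation*}
\bigl|\EE[\Phi(X_M^h)] - \EE[\Phi(X(T))]\bigr| \leq \bigl|\EE[\Phi(X_M^h)] - \EE[\Phi(X^h(T))]\bigr| + \bigl|\EE[\Phi(X^h(T))] - \EE[\Phi(X(T))]\bigr|,
\end{equation*}
so that the first piece is the temporal weak error of the exponential Euler scheme at the semi-discrete level and the second piece is a spatial error that I will lift from a strong bound through the Lipschitz continuity of $\Phi$.

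For the temporal piece I would apply \thmref{thm:mainadditive} in the additive case and \thmref{thm:multiplicative} in the multiplicative case. Those results are phrased with $\beta \in (0,1]$, so to use them here I would first verify that, under the present hypotheses with $\beta \in [1,2)$, the corresponding condition with parameter $\beta' = 1$ is also satisfied. In the additive case this is immediate: since $-A$ is coercive, $(-A)^{-(\beta-1)/2}$ is bounded on $H$ and
\begin{equation*}
\Vert Q^{1/2}\Vert_{\mathcal{L}_2(H)} \leq \Vert (-A)^{-(\beta-1)/2}\Vert_{L(H)}\,\Vert (-A)^{(\beta-1)/2}Q^{1/2}\Vert_{\mathcal{L}_2(H)} < \infty,
\end{equation*}
and an analogous reduction gives $\Vert B(v)\Vert_{\mathcal{L}_2(H)} \leq C(1+\Vert v\Vert)$ in the multiplicative case. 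The initial-value condition $X_0\in \mathcal{D}((-A)^{\beta/2})$ trivially implies $X_0\in \mathcal{D}((-A)^{1/2})$ since $\beta \geq 1$, and self-adjointness of $A$ forces $A_h^*=A_h$, so the adjoint hypothesis in \thmref{thm:mainadditive}/\thmref{thm:multiplicative} holds automatically. Applying the semi-discrete weak theorem with $\beta'=1$ therefore delivers $\bigl|\EE[\Phi(X_M^h)] - \EE[\Phi(X^h(T))]\bigr|\leq C\Dt^{1-\epsilon}$.

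For the spatial piece, $\Phi \in \mathcal{C}_b^2(H,\R)$ has bounded first Fr\'echet derivative and is hence globally Lipschitz. Combining this with the Cauchy--Schwarz inequality and the strong estimate \eqref{seleq} of \thmref{strong}, which applies because $A$ is self-adjoint and $\beta\in[1,2)\subset [0,2)$, yields
\begin{equation*}
\bigl|\EE[\Phi(X^h(T))] - \EE[\Phi(X(T))]\bigr| \leq C\,\EE\Vert X^h(T)-X(T)\Vert \leq C\Vert X^h(T)-X(T)\Vert_{L_2(\mathbb{D},H)} \leq Ch^\beta.
\end{equation*}
Adding the two contributions will give the claim. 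The main subtlety here is not computational but conceptual: the exponential Euler scheme cannot exceed temporal order one, so the extra noise regularity $\beta-1>0$ cannot be converted into a sharper temporal rate, and the semi-discrete weak result must be applied with its parameter frozen at $\beta'=1$. Once this is recognised, the proof reduces to verifying the compatibility of hypotheses and summing the two bounds, with no further Kolmogorov-equation or Malliavin argument needed on top of what has already been established in \secref{sec3}, \secref{sec4} and \secref{sec5}.
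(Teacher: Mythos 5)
Your proposal is correct and follows essentially the same route as the paper: the same triangle-inequality splitting into a temporal semi-discrete weak error (handled by Theorems \ref{thm:mainadditive}/\ref{thm:multiplicative} with the noise parameter frozen at $1$) plus a spatial error bounded via the Lipschitz continuity of $\Phi$ and the strong estimate \eqref{seleq} of Theorem \ref{strong}. Your explicit verification that the hypotheses reduce to the $\beta'=1$ case (boundedness of $(-A)^{-(\beta-1)/2}$ and $A_h^*=A_h$ from self-adjointness) is slightly more detailed than the paper's, which simply asserts that the lemmas remain valid.
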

\begin{proof}
 Indeed we have  the following decomposition
 \begin{eqnarray}
  |\EE[\Phi(X_M^h)] - \EE[\Phi(X(T)]| \leq|\EE[\Phi(X_M^h)] - \EE[\Phi(X^h(T)]|+|\EE[\Phi(X^h(T)] - \EE[\Phi(X(T)]|.
  \end{eqnarray}
  Note that if  $X_0\in \mathcal{D}((-A)^{\beta/2}),\, 1 \leq \beta< 2$, then $X_0\in \mathcal{D}((-A)^{1/2})$
  and all the lemma used in \thmref{thm:mainadditive} and \thmref{thm:multiplicative} are still valid.
From \thmref{thm:mainadditive} and \thmref{thm:multiplicative} with $\beta=1$ (as the noise is assumed to be trace class), we have
\begin{eqnarray}
  |\EE[\Phi(X_M^h)] - \EE[\Phi(X^h(T)]|\leq C\Delta t^{1-\epsilon}.
\end{eqnarray}
Note  that this temporal order is  the double of  the strong convergence  obtained in \cite{GTambueexpoM}.
Remember that we are using low order finite element method for space discretization, where  the optimal convergence  is 2 (for deterministic case), 
therefore for  $X_0\in \mathcal{D}((-A)^{\beta/2}),\, 1 < \beta< 2$, 
 according  to  \thmref{strong} and \rmref{troncate}, we cannot expect the  order of weak convergence to double the  strong order in space. The weak convergence order in space (of course, not necessarily optimal) 
 can be the same  as  the strong convergence order when  the solution is regular enough.
 Using the fact  that $\Phi\in \mathcal{C}_2^b(H;\mathbb{R})$, so is Lipschitz, we therefore have
 \begin{eqnarray*}
  |\EE[\Phi(X^h(T)] - \EE[\Phi(X(T)]|&\leq& |\EE[\Phi(X^h(T))- \Phi(X(T))]| \\
  &\leq&  \EE \Vert X^h(T)-X(T)\Vert \\
  & \leq & C \Vert X^h(T)-X(T)\Vert_{L_2(\mathbb{D},H)}\\
  & \leq & Ch^{\beta}.
 \end{eqnarray*}
\end{proof}

The following  theorem  using recent result in the literature shows that  the 
space order of convergence in \thmref{fullweak} is far to be optimal for $\beta=1$.
 \begin{theorem}
 \label{opfullaplace}
  Assume that $A=\varDelta$ with  Dirichlet boundary condition ($V=H_0^1(\Omega)$), assume that the noise is additive, \assref{ass:driftandB} is satisfied with
  $\Vert (-A)^{\frac{\beta-1}{2}}Q^{1/2}\Vert_{\mathcal{L}_{2}(H)}<\infty$ for some $\beta \in [1/2,1]$. 
  Furthermore assume that $X_0\in \mathcal{D}((-A)^{\beta})$.
 For $\Phi\in \mathcal{C}_2^b(H;\mathbb{R})$ and  arbitrary small $\epsilon>0,$ the following estimation hold
\begin{equation} 
|\EE[\Phi(X_M^h)] - \EE[\Phi(X(T)]|\leq C(\Delta t^{\beta-\epsilon}+h^{2\beta-\epsilon}),
\end{equation}
where the constant $C$ depends on $\alpha, \,\epsilon,\, ...,T,\,L$ and the initial data, but is independent of $h$ and $M.$ 
   \end{theorem}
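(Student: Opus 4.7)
The plan is to split the full weak error via the triangle inequality
\[
|\EE[\Phi(X_M^h)] - \EE[\Phi(X(T))]| \leq |\EE[\Phi(X_M^h)] - \EE[\Phi(X^h(T))]| + |\EE[\Phi(X^h(T))] - \EE[\Phi(X(T))]|,
\]
so that the temporal and spatial contributions can be handled by completely separate machinery. For the temporal piece, I would invoke \thmref{thm:mainadditive} directly: the Dirichlet Laplacian is self-adjoint, so $A_h^{*}=A_h$ and the adjoint hypothesis in that theorem is automatic; the trace assumption \eqref{asseq4} holds with the given $\beta\in[1/2,1]$; and since $X_0\in\mathcal{D}((-A)^{\beta})\subset\mathcal{D}((-A)^{1/2})$, \lemref{lem:boundonxmh} applies. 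This yields
\[
|\EE[\Phi(X_M^h)] - \EE[\Phi(X^h(T))]| \leq C\,\Delta t^{\beta-\epsilon}.
\]

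For the spatial error, the naive bound used in \thmref{fullweak} only gave $h^{\beta}$ because it dominated the weak error by the strong error through the Lipschitz property of $\Phi$. To recover the doubled rate $h^{2\beta-\epsilon}$ promised by the statement, I would appeal to the recent work \cite{stignonlinearweak}, where exactly the ``weak order equals twice the strong order'' phenomenon is established for the finite element discretization of semi-linear parabolic SPDEs with additive noise when the linear operator is self-adjoint. In the Dirichlet Laplacian setting considered here, the ingredients demanded by that reference, namely the smoothness of $F$ encoded in \assref{ass:driftandB}, the twice continuous Fr\'echet differentiability of $\Phi$, the bound $\Vert(-A)^{(\beta-1)/2}Q^{1/2}\Vert_{\mathcal{L}_{2}(H)}<\infty$, and the strengthened regularity $X_0\in\mathcal{D}((-A)^{\beta})$, are precisely the standing assumptions. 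Invoking the result of \cite{stignonlinearweak} then gives
\[
|\EE[\Phi(X^h(T))] - \EE[\Phi(X(T))]| \leq C\,h^{2\beta-\epsilon},
\]
and adding the two bounds finishes the proof.

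The main obstacle is not a new computation but a careful matching of hypotheses between our framework and that of \cite{stignonlinearweak}: the finite element projection $P_h$ and the discrete operator $A_h$ must be identified with the objects used there, the class $\mathcal{C}_b^{2}(H,\mathbb{R})$ of test functions must be verified to coincide, and the small loss $\epsilon$ inherited from their spatial estimate must be shown to combine with the temporal loss to produce the single $\epsilon$ in our statement. The reason we must upgrade $X_0\in\mathcal{D}((-A)^{\beta/2})$ (as in \thmref{fullweak}) to $X_0\in\mathcal{D}((-A)^{\beta})$ is that the doubling argument in \cite{stignonlinearweak} propagates additional spatial regularity through the analytic semigroup, and this stronger initial datum is exactly what is needed to turn the strong rate $h^{\beta}$ into the weak rate $h^{2\beta-\epsilon}$; beyond this bookkeeping, no further argument is required.
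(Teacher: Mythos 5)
Your proposal follows essentially the same route as the paper: the same triangle-inequality split into a temporal error handled by \thmref{thm:mainadditive} (using $X_0\in\mathcal{D}((-A)^{\beta})\subset\mathcal{D}((-A)^{1/2})$ and self-adjointness) and a spatial error obtained by citing \cite{stignonlinearweak} for the doubled rate $h^{2\beta-\epsilon}$. The paper's proof is equally brief and adds only the observation that \assref{ass:driftandB} implies $F\in\mathcal{C}_2^b(H;H)$ as needed for the cited result, so your argument is correct and matches it.
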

 Let us prove \thmref{opfullaplace}.  
 
 \begin{proof}
  The proof follows the one for \thmref{fullweak} and we have
 \begin{eqnarray}
  |\EE[\Phi(X_M^h)] - \EE[\Phi(X(T)]| \leq|\EE[\Phi(X_M^h)] - \EE[\Phi(X^h(T)]|+|\EE[\Phi(X^h(T)] - \EE[\Phi(X(T)]|.
  \end{eqnarray}
  If  $X_0\in \mathcal{D}((-A)^{\beta}),\, 1/2  \leq \beta\leq 1$, then $X_0\in \mathcal{D}((-A)^{1/2})$ and  
   all the lemma used in \thmref{thm:mainadditive} and \thmref{thm:multiplicative} are still valid.
From \thmref{thm:mainadditive},  if $\Vert (-A)^{\frac{\beta-1}{2}}Q^{1/2}\Vert_{\mathcal{L}_{2}(H)}<\infty$ for some $\beta \in [1/2,1]$ we have
\begin{eqnarray}
  |\EE[\Phi(X_M^h)] - \EE[\Phi(X^h(T)]|\leq C\Delta t^{\beta-\epsilon}.
\end{eqnarray}
Note that  \assref{ass:driftandB} implies that $F \in \mathcal{C}_2^b(H;H)$.  For $\Vert (-A)^{\frac{\beta-1}{2}}Q^{1/2}\Vert_{\mathcal{L}_{2}(H)}<\infty$ for some $\beta \in [1/2,1]$, 
\cite[Assumption A, Theorem 1.1]{stignonlinearweak} gives 
\begin{eqnarray}
\label{spacedis}
 |\EE[\Phi(X^h(T)] - \EE[\Phi(X(T)]|\leq  h^{2\beta-\epsilon}.
\end{eqnarray}
The proof of \eqref{spacedis} in \cite{stignonlinearweak} uses some elements of Malliavin calculus.
\end{proof}
The following remark generalizes the \thmref{opfullaplace} for general selfadjoint operator with not necessarily Dirichet boundary condition.
   \begin{remark}
   \label{oppta}
For additive noise and under the same condition as in \thmref{opfullaplace}, if $A$ is selfadjoint, and $\Vert (-A)^{\frac{\beta-1}{2}}Q^{1/2}\Vert_{\mathcal{L}_{2}(H)}<\infty$ for some $\beta \in [k,1], \; k\geq 1/2$ 
and  $X_0\in \mathcal{D}((-A)^{\beta})$.
For $\Phi\in \mathcal{C}_2^b(H;\mathbb{R})$ and  arbitrary small $\epsilon>0,$ the following estimation hold
\begin{equation} 
\label{optt}
|\EE[\Phi(X_M^h)] - \EE[\Phi(X(T)]|\leq C(\Delta t^{\beta-\epsilon}+h^{2\beta-\epsilon}).
\end{equation}
The proof is the same as in \cite[Assumption A, Theorem 1.1]{stignonlinearweak} where  the Laplace operator is used just for simplicity. 
The difference comes from  the set of  the eigenvalues of the self adjoint operator  $A$. Once the range  of $\beta$ 
such that  $\Vert (-A)^{\frac{\beta-1}{2}}Q^{1/2}\Vert_{\mathcal{L}_{2}(H)}<\infty$ is found, the proof of  \cite[Assumption A, Theorem 1.1]{stignonlinearweak} is applied line by line.
\end{remark}
The following numerical simulations will confirm  numerically estimation \eqref{optt} of \rmref{oppta}.
\section{Numerical Simulations}
\label{sec6}
We consider the reaction diffusion equation 
\begin{eqnarray}
\label{linear}
 dX=(D \varDelta X -0.5 X)dt+ dW 
\qquad \text{given } \quad X(0)=X_{0},
\end{eqnarray}
on the time interval $[0,T]$ 
and homogeneous Neumann boundary conditions on the domain
$\Omega=[0,L_{1}]\times [0,L_{2}]$.   The  noise is represented by \eqref{eq:W}.
The eigenfunctions $\{ e_{i,j}\}_{i,j\geq 0}=\{e_{i}^{(1)}\otimes e_{j}^{(2)}\}_{i,j\geq 0}
$ of the operator $-\varDelta$ here are given by 
\begin{eqnarray*}
e_{0}^{(l)}(x)=\sqrt{\dfrac{1}{L_{l}}},\qquad 
e_{i}^{(l)}(x)=\sqrt{\dfrac{2}{L_{l}}}\cos(\lambda_{i}^{(l)}x), \qquad \lambda_{0}^{(l)}=0,\qquad
\lambda_{i}^{(l)}=\dfrac{i \pi }{L_{l}}
\end{eqnarray*}
where $l \in \left\lbrace 1, 2 \right\rbrace,\, x\in \Omega$ and  $i \in \mathbb{N}$
with the corresponding eigenvalues $ \{\lambda_{i,j}\}_{i,j\geq 0} $ given by 
$\lambda_{i,j}= (\lambda_{i}^{(1)})^{2}+ (\lambda_{j}^{(2)})^{2}$. We
take $L_1=L_2=1$. 
Notice that $A=D \varDelta $ does not satisfy \assref{assumptionn} as
$0$ is an eigenvalue. 
To eliminate the eigenvalue $0$ we use the perturbed operator $A=D \varDelta+\epsilon \mathbf{I},\, \epsilon>0$. 
The exact solution of \eqref{linear} is known . Indeed 
the decomposition of \eqref{linear} in each eigenvector node yields the following  Ornstein-Uhlenbeck process
\begin{eqnarray}
\label{exact}
 dX_{i}=-(D \lambda_{i}+0.5)X_{i}dt+ \sqrt{q_{i}}d\beta_{i}(t)\qquad i \in \mathbb{N}^{2}.
\end{eqnarray}
This is a Gaussian process with the mild solution
\begin{eqnarray}
X_{i}(t)= e^{-k_{i}t}X_{i}(0)+  \sqrt{q_{i}}\int_{0}^{t}e^{k_{i}(s-t)} d \beta_{i}(s),\quad k_{i}=D \lambda_{i}+0.5,
\end{eqnarray}
which  is therefore an Ornstein-Uhlenbeck process.
Applying the It\^{o} isometry yields the following exact variance of $X_{i}(t)$
\begin{eqnarray}
 \text{Var}(X_{i}(t))=\dfrac{q_{i}}{2 k_{i}}\left(1-e^{-2 \,k_{i}\,t}\right).
\end{eqnarray}
During the simulations, we compute the exact solution recurrently as 
\begin{eqnarray}
\label{exact}
X_{i}^{m+1}&=& e^{-k_{i} \Delta t}X_{i}^m+  \sqrt{q_{i}}\int_{t_m}^{t_{m+1}}e^{k_{i}(s-t)} d \beta_{i}(s)\nonumber\\
            &=& e^{-k_{i} \Delta t }X_{i}^m +\left(\dfrac{q_{i}}{2 k_{i}}\left(1-e^{-2 \,k_{i}\,\Delta t}\right)\right)^{1/2}R_{i,m},
\end{eqnarray}
where $R_{i,m}$ are independent, standard normally distributed random
variables with mean $0$ and variance $1$.
 The expression in \eqref{exact}  allows to use the same set of random numbers for both the exact and the numerical solutions. 

Our function $F(u)=-0.5 u $ is linear and obviously satisfies the Lipschitz condition in \assref{assumption1}. 
We assume that the covariance operator $Q$ and $A$ have the same eigenfunctions and we take the eigenvalues of the covariance operator to be
\begin{eqnarray}
\label{noise2}
 q_{i,j}=\left( i^{2}+j^{2}\right)^{-(\beta+\delta)}, \beta>0
\end{eqnarray} 
in the representation \eqref{eq:W} for some small $\delta>0$.
Indeed for $ \beta \in [0,1]$  we have 
\begin{eqnarray*}
\Vert (-A)^{\frac{\beta-1}{2}}Q^{1/2}\Vert_{\mathcal{L}_{2}(H)}<\infty \Leftrightarrow
\underset{(i,j) \in \mathbb{N}^{2}}{\sum}\lambda_{i,j}^{\beta-1}q_{i,j}<  \pi^{2}\underset{(i,j) 
\in \mathbb{N}^{2}}{\sum} \left( i^{2}+j^{2}\right)^{-(1+\delta)} <\infty.
\end{eqnarray*}
We will consider the following two test functions
\begin{eqnarray}
 \Phi_1: f  \rightarrow \int_\Omega f(x) dx, \qquad \qquad \qquad  \Phi_2: f  \rightarrow \Vert f\Vert^2.
\end{eqnarray}
which obviously belong to $\mathcal{C}_{b}^{2}(H,\mathbb{R})$. 
 By setting $B=D \varDelta -0.5 I$, using the fact  the the Ito's integral vanishes, for $X_0 \in H$ we have
\begin{eqnarray}
\label{exact1}
 \EE\Phi_1(X(t))&=& \int_{\Omega} e^{t B}X_0(x) dx+ \EE \left[ \int_\Omega \left(\int_0^t e^{(t-s) B} dW(s,x)\right)dx \right ]\nonumber \\
     &=& \int_{\Omega} e^{t B}X_0(x) dx+  \int_\Omega \EE \left[\int_0^t e^{(t-s) B} dW(s,x) \right ]dx\nonumber \\
     &=& \int_{\Omega} e^{t B}X_0(x) dx \nonumber\\
     &=&\underset{i,j \geq 0}{\sum^{\infty}}e^{-k_{i j} t} \langle e_{i,j},X_0\rangle_H\int_{\Omega} e_{i j}(x)dx, \qquad k_{i j}=D\lambda_{i,j}+0.5.
  \end{eqnarray}
We also have 
\begin{eqnarray}
\label{exact22}
 \EE\Phi_2(X(t))&=&\EE\Vert e^{t B}X_0+\int_0^t e^{(t-s) B} dW(s)\Vert^{2}\\
                 &=&  \EE \Vert e^{t B}X_0\Vert^2+    \EE \Vert \int_0^t e^{(t-s) B} dW(s) \Vert^2+    2\EE \langle e^{t B}X_0, \int_0^t e^{(t-s) B} dW(s) \rangle_H
 \end{eqnarray}
 Using the spectral decomposition, the fact that $X_0 \in H$ and the Ito isometry yields
 \begin{eqnarray}
 \label{exact2}
  \EE\Phi_2(X(t))=\underset{i,j \geq 0}{\sum^{\infty}} e^{-2k_ij} \langle e_{i,j},X_0\rangle_{H}^2+ \underset{i,j \geq 0}{\sum^{\infty}} \langle \dfrac{q_{i j}}{2 k_{i j}}\left(1-e^{-2 \,k_{i j}\,\Delta t}\right)\rangle_H.
 \end{eqnarray}
 To compute  the exact solutions $\EE\Phi_1(X(t))$ and $\EE\Phi_2(X(t))$, we can either truncate \eqref{exact1} and \eqref{exact2} by using the first $N_h$ terms, $N_h$ being the number of finite element test basis functions,
 or use directly the Monte Carlo method with \eqref{exact}. 
 
 Our code was implemented in Matlab 8.1. We use two different intial solutions with each test function $\Phi_1$ and $\Phi_2$. Indeed we use  the initial solution $X_0=X_0^{(2)}=0$ for $\Phi_2$, and  
 $$X_0=X_0^{(1)}= \underset{i,j \geq 1}{\sum^{\infty}} q^1_{i,j} e_{i,j},\;\;\;q^1_{i,j}=(i^2+j^2)^{-1.001}$$ for for $\Phi_1$. 
 Our finite element triangulation has been done from rectangular grid of maximal length $h$.  Fast Leja Points technique as presented in \cite{TLG,SebaGatam} 
 is used to compute the exponential matrix function $S_h^1= \varphi_1$.
 In the legends of our graphs, we use the following notations
\begin{itemize}
 \item  ''TimeError1'' denotes the weak error for fixed $h=1/150$  with $\Phi=\Phi_1$, $X_0=X_0^{(1)}$ and final time $T=1$.
 \item  ''TimeError2'' denotes the weak error for fixed $h=1/50$ with $\Phi=\Phi_2$, $X_0=X_0^{(2)}$ and final time $T=1$.
 \item  ''SpaceError1'' denotes the weak error for fixed time step $\Dt=1/500$  with $\Phi=\Phi_1$,  $X_0=X_0^{(1)}$ and final time $T=0.1$.
 \item  ''SpaceError2'' denotes the weak error for fixed time step  $\Dt=1/20000$ with $\Phi=\Phi_2$ and  $X_0=X_0^{(2)}$ and final time $T=0.1$.
\end{itemize}
In all our graphs $D=0.1, \beta=1$, $\delta=0.001$, the weak errors are computed at the final time $T$ and $50$ realizations are used to estimate the weak errors with the Monte Carlo method.
\begin{figure}[!ht]
  \subfigure[]{ 
    \label{FIG01a}
    \includegraphics[width=0.48\textwidth]{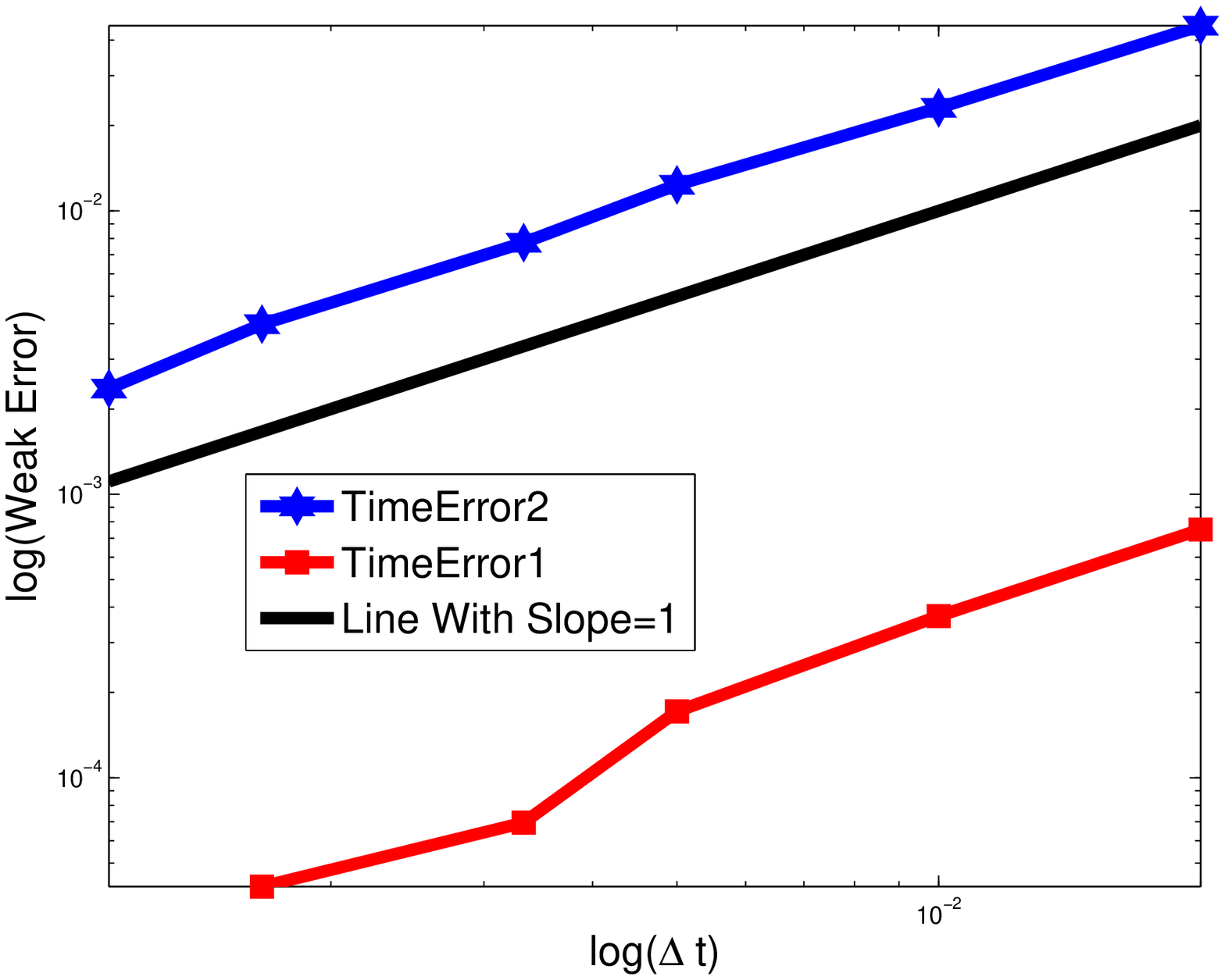}}
  \hskip 0.01\textwidth
  \subfigure[]{
    \label{FIG01b}
    \includegraphics[width=0.48\textwidth]{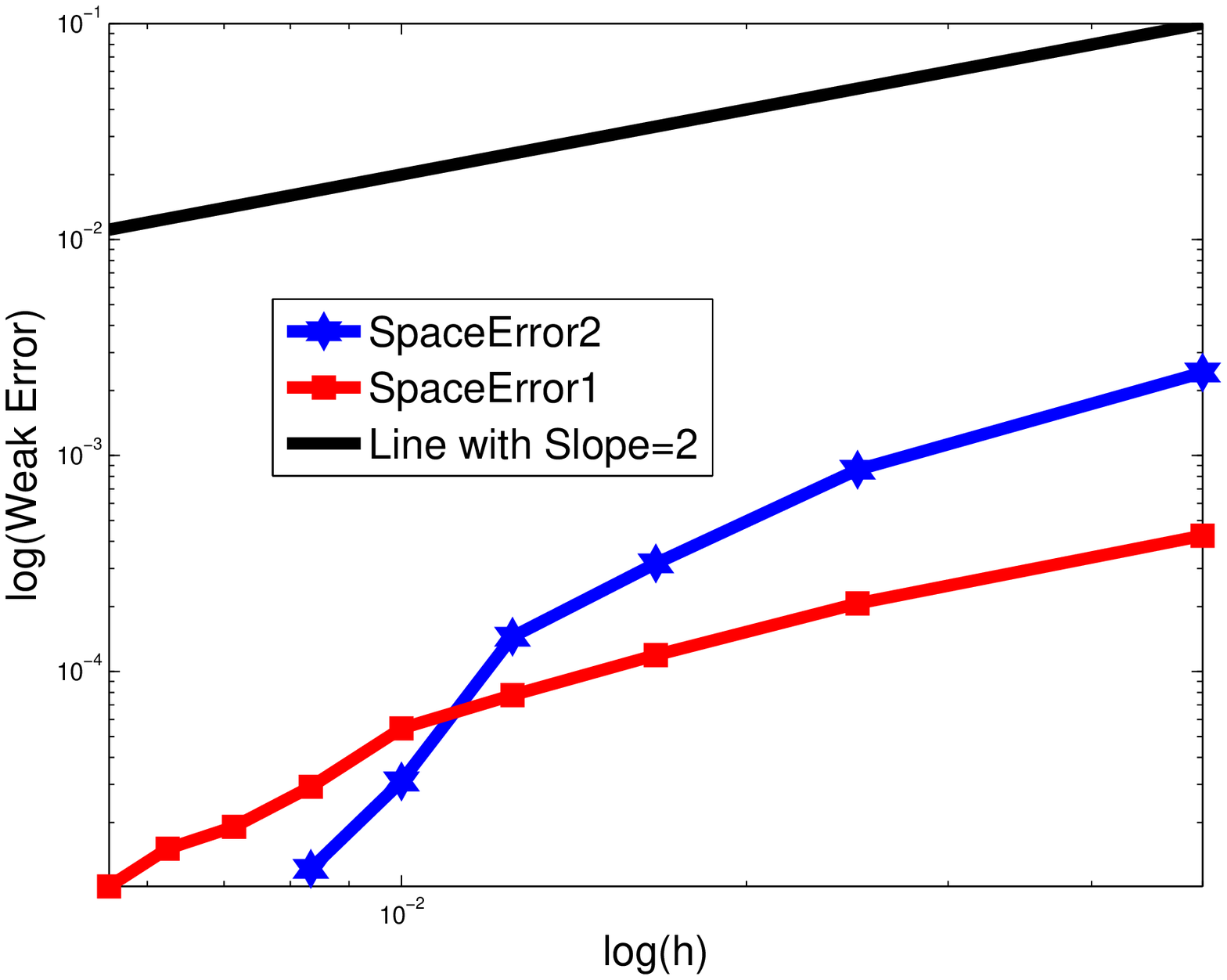}}
  \caption{(a) Weak convergence  in time of  the exponential scheme for $\Phi=\Phi_1$  with $X_0=X_0^{(1)}$ (TimeError1) and $\Phi=\Phi_2$  with $X_0=X_0^{(2)}$ (TimeError2). (b)
  Weak convergence  in space of  the exponential scheme for $\Phi=\Phi_1$  with $X_0=X_0^{(1)}$ (SpaceError1), and $\Phi=\Phi_2$  with $X_0=X_0^{(2)}$ (SpaceError2).
  }
  \label{FIG01}
\end{figure}

\figref{FIG01a} shows  the weak convergence  in time of  the exponential scheme for $\Phi=\Phi_1$  with $X_0=X_0^{(1)}$, and $\Phi=\Phi_2$  with $X_0=X_0^{(2)}$. 
The weak order of convergence in time is respectively  $1.1$ for $\Phi=\Phi_1$ and $1.008$ for $\Phi=\Phi_2$. 
These orders of convergence are then closed to the optimal order $1$ obtained in \rmref{oppta}.

For space convergence, very small time steps are needed and the weak errors are performed at small final  time $T=0.1$, \figref{FIG01b} shows  the weak convergence  in space of  the exponential scheme for $\Phi=\Phi_1$  with $X_0=X_0^{(1)}$, and $\Phi=\Phi_2$  with $X_0=X_0^{(2)}$. 
The weak order of  convergence in space is respectively  $1.71$ for $\Phi=\Phi_1$ and $2.01$ for $\Phi=\Phi_2$. 
These orders of convergence are also closed to the optimal order $2$ obtained in \rmref{oppta}. To sum  up, our simulations confirm the theoretical results obtained in \rmref{oppta}.

\section*{Acknowledgements}
This project was supported by the Robert Bosch
Stiftung through the AIMS ARETE chair programme.

%
%
%
%
  
%
%
\section*{REFERENCES}
\bibliographystyle{abbrv}
 \bibliography{weakconvergenceSPDE_July6.bbl}
\end{document}